\documentclass{amsart}
\usepackage[utf8]{inputenc}
\usepackage{amsmath,amssymb,amscd,mathrsfs}
\usepackage{stmaryrd}
\usepackage{graphicx}
\usepackage{relsize}
\usepackage{cancel}
\usepackage{geometry}
\usepackage{mathtools}
\usepackage{tikz,tkz-tab,tikz-cd}
\usepackage{xcolor,colordvi,multicol,fancybox,tcolorbox}
\usepackage{dsfont}
\usepackage{enumitem}
\usepackage{boxedminipage}
\usepackage{scalerel,stackengine}
\usepackage{bookmark}
\usepackage[type={CC},modifier={by-nc-sa},version={4.0},]{doclicense}
\usepackage[all,cmtip]{xy}
\usepackage[normalem]{ulem}
\usepackage[backend = bibtex, style = alphabetic, maxbibnames = 5]{biblatex} %Backend : la ligne salvatrice pour bibtex et les biblio sous texmaker !!!!
\tcbuselibrary{skins,breakable}
\usepackage{graphicx}
\usepackage{titlepic}
\usepackage{yhmath}
\usepackage{thmtools}

\def\dar[#1]{\ar@<2pt>[#1]\ar@<-2pt>[#1]}
\entrymodifiers={!!<0pt,0.7ex>+}

\title{Geometric obstructions to fully ellipticity for families of manifold with corners}
\author{Florian THIRY}

\newcommand{\R}{\mathbb{R}}

\newcommand{\Z}{\mathbb{Z}}
\newcommand{\N}{\mathbb{N}}

\newcommand{\calK}{\mathcal{K}}

\newcommand{\calU}{\mathcal{U}}

\newcommand{\arInj}{\ar@{^{(}->}}
\newcommand{\arSurj}{\ar@{->>}}
\newcommand{\arEgal}{\ar@{=}}

\newcommand{\bigsqcupsim}{\underset{\sim}{\bigsqcup}}

\def\subar[#1]{\ar@{}[#1]|-*[@]{\subseteq}}
\def\supar[#1]{\ar@{}[#1]|-*[@]{\supseteq}}

\newtheorem{thm}{Theorem}[section]
\newtheorem{prop}[thm]{Proposition}
\newtheorem{lem}[thm]{Lemma}

\newtheorem{defi}{Definition}
\newtheorem{rem}{Remark}
\newtheorem{ex}{Example}

\setcounter{tocdepth}{4}
\setcounter{secnumdepth}{4}

\bibliography{biblio.bib}

\begin{document}

\begin{center}
{\LARGE Geometric obstructions to fully ellipticity \\ for families of manifolds with corners \par}
\vspace{0.3cm}
{\normalsize Florian Thiry \footnote{Univ Toulouse, CNRS, IMT, Toulouse, France. Email address: \texttt{florian.thiry@math.univ-toulouse.fr}}} \\
\vspace{0.2cm}

\end{center}

\vspace{0.4cm}

\begin{center}
	\textbf{Abstract}
\end{center}

	\noindent In the present paper we study index theory for families of manifold with corners. In particular the K-theoretical obstruction for an elliptic operator to have a family (Fredholm) index. For codimension 1 corners (families with boundary) it gives an expected condition on indices associated to codimension 1 faces. The main theorem of this paper concern the codimension 2 case: in addition to the expected condition on indices associated to codimension 2 faces, there is an extra combinatoric condition implying the topology of the family base. This new condition is expressed in terms of conormal homology cycles with K-theoretical coefficients.
	\vspace{2cm}

\tableofcontents

\section*{Introduction}

It is an established fact that in the context of manifolds (resp. families of manifolds) with boundary, not every elliptic operator has a Fredholm index (resp. a family index). This fact dates back from Atiyah's work, for instance in \cite{AB65} Atiyah and Bott studied the so-called Atiyah Patodi Singer conditions (APS) in which some elliptic operators are shown to be Fredholm despite the boundary, this include the Dirac operators. In \cite{BC90a} and \cite{BC90b} Bismut and Cheeger study families of Dirac operators on manifolds with boundary, using tools like cone method and Levi-Civita superconnections, they were able to compute integral formulas for Chern caracters under (APS) conditions. This work was extended in \cite{MP97} by Melrose and Piazza where they proved an index theorem for families of Dirac operators on manifolds with boundary, this result has been extended to every Fredholm operators using groupoids machinery by Carrillo Rouse, Lescure and Monthubert \cite{RLM13}. Melrose and Piazza in their work also realised that the vanishing of a K-theoretical index map over the boundary allows to define (APS) conditions and then to recover Fredholm operators. This fact translates a K-theoretical obstructions for elliptic operators to be Fredholm when the manifold has a boundary. Monthubert and Nistor using groupoids technics pushed that study showing in a more general context (manifold with corners) that an elliptic operator can be perturbated into a Fredholm one if and only if a certain index map vanishes over the boundary \cite{MN11}. The vanishing of this "boundary index" is also discussed in \cite{DS19} or in \cite{NSS08} and \cite{NSS10}. In \cite{CRL18} and \cite{CRL25} Carrillo Rouse and Lescure compute geometrically this boundary index map for low codimension manifold with corners. Moreover in \cite{CRLV21} via another strategy, Carrillo Rouse, Lescure and Velasquez compute it rationally (i.e up to torsion) for every manifold with corners. \\ \\
Let's denote $Ind_{\partial}$ the boundary map, and $B$ a smooth manifold. The trivial manifold with embedded corners family over $B$, namely the cartesian product $B \times Y$, is itself endowed with embedded corners, then the Carrillo Rouse and Lescure work still apply. In the present paper we propose to extend their results for a general family $\pi : X \longrightarrow B$ whose typical fiber is a manifold with embedded corners $Y$. We restrict ourselves to cases where $X$ is itself a codimension $1$ and $2$ manifold with embedded corners. \\ For $Codim(X) = 1$, we explicitely compute both the obstruction space $K^0(\Gamma_{b, \mathfrak{f}}(\partial X)) \cong K^1(B)^{\# \mathcal{F}_1(X)}$ (i.e $Ind_{\partial}$ codomain) and the obstruction map $Ind_{\partial}$.

\begin{restatable*}[Vanishing of the boundary index in codimension 1]{thm}{premierCodThm}
	For $X \longrightarrow B$ a codimension 1 family of manifold with embedded corners, $[\sigma_T]_0 \in K^0(\Gamma^{tan}_{b, \mathfrak{f}}(X))$, the following are equivalent: \\
	$Ind_{\partial}([\sigma_T]_0) = 0$ ssi $\forall g \in \mathcal{F}_1(X), Ind_1^g([{\sigma_T}_{\vert g}]_0) = 0$ in $K^1(B)$.
\end{restatable*}
Each codimension 1 face is a smooth family of manifolds, then the restriction of the operator to it has a family index valued in $K^1(B)$, topologically computable via Atiyah-Singer theorem for families (e.g see \cite[thm 2.2.1]{CR23}). In our theorem $Ind_1^g$ denotes the suspension of this index for the codimension 1 face $g$. \\

In codimension $2$ we were able to make the obstruction space fit in the middle of a short exact sequence where left and right terms are explicitely computable in terms of conormal homology groups with K-theoretic coefficients:

\begin{restatable*}[Obstruction space estimation in codimension 2]{thm}{secondObstrctSpaceThm}
	The obstruction space $K_0(A_2/A_0) = K^0(G_F)$ fits in the following short exact sequence:
	\[
		0 \longrightarrow
		H_1^{pcn}(X, X_0 ; K^1(B)) \longrightarrow
		K_0(A_2/A_0) \longrightarrow
		H_0^{pcn}(X, X_0 ; K^0(B)) \longrightarrow 0.
	\]
\end{restatable*}

Then we caracterised the vanishing of the boundary map $Ind_{\partial}$. An extra geometrical condition appear in terms of conormal cycle, this is the main theorem of this paper:

\begin{restatable*}[Vanishing of the boundary index in codimension 2]{thm}{secondCodThm}
	Let $[\sigma_T]_0 \in K^0(\Gamma^{tan}_{b, \mathfrak{f}}(X))$. \\
	\[
		Ind_{\partial}([\sigma_T]_0) = 0 
		\Leftrightarrow
		\left|
		\begin{array}{l}
			\forall f \in \mathcal{F}_2(X), Ind_2^f([{\sigma_T}_{\vert f}]_0) = 0 \\
			\text{and the induced $\mathcal{F}_2(X)$-relative non commutative symbol $\overset{\circ}{\sigma_T}$ is such that} \\
			\left(Ind_{1, nc}^{\overline{g}}([\overset{\circ}{\sigma_T}_{\vert \overline{g}}]_0) \right)_{g \in \mathcal{F}_1(X)} \,\,\, \text{vanishes in $H_1^{pcn}(X, X_0 ; K^1(B))$}.
		\end{array}
		\right.
	\]
\end{restatable*}
Here $Ind_2^f$ denotes the 2-suspended family index of the smooth family $f$, and $Ind_{1, nc}^{\overline{g}}$ the suspension of the Fredholm family index on $\overline{g}$ such as defined in \cite{RLM13} for manifold with boundary. \\

Our approach behaving like a spectral sequence, to continue with diagram chasing we need existance of sections. The main obstruction to their existance lies in torsion. In \cite{CRL18} and \cite{CRL25} authors show that in their case (i.e $B = \{*\})$ conormal homology and the associated K-theory groups are torsion free when $Codim(X) \leqslant 2$. \\
In \cite{SV22} it is shown that torsion could appear in homology groups $H_*^{cn}(X) $ when $B = \{*\}$ and $Codim(X) \geqslant 3$. It implies that the strategy of the present paper cannot in general be extended for codimension bigger or equal to 3. \\

Another strategy could allows us to produce a rationnal computation (up to torsion) in every codimension using Connes-Thom isomorphisms and Chern caracter as in \cite{CRLV21}. This will be done in a next paper. \\ \\

In section 1 we introduce a $C^*$-algebraic framework to express a K-theoretical obstruction in a general abstract context. Then we apply it to pseudodifferential calculus to elaborate a relative calculus where the obstruction mentionned corresponds to the ability for an elliptic operator to be perturbated into a fully elliptic one, i.e to the boundary index. \\

In section 2 we introduce manifolds with embedded corners and their automorphism. Choosing those last as cocycles we define families of such manifolds and we give some examples. \\

In section 3 from such a family $X$ we introduce a groupoid analogue to Monthubert Puff groupoid denoted $\Gamma_{b, \mathfrak{f}}(X)$ and his canonical filtration of $C^*$-algebras $(A_k)_{k=0}^d$. We describe his differential structure and some local diffeomorphisms. Then we use explicit computations of $K_*(\frac{A_{k}}{A_{k-1}})$ to highlight link between conormal homology and the K-theoretical connections of the sequence $0 \rightarrow \frac{A_{k-1}}{A_{k-2}} \rightarrow \frac{A_{k}}{A_{k-2}} \rightarrow \frac{A_{k}}{A_{k-1}} \rightarrow 0$. \\

In section 4 we prove the announced results using comparison between K-theory and conormal homology.

\bigskip 

\noindent \textbf{Acknowledgements.} I would like to thank my advisor Paulo Carrillo Rouse for his guidance through this project. The author is funded by an EUR MINT PhD fellowship.

\nocite{*}

\newpage

%1.

\section{Obstruction and diagonal index}

\subsection{$C^*$ algebraic obstruction theorem}
For this section we set a $C^*-$algebra diagram as follows:
\[
    \xymatrix{& 0 \ar[d] & 0 \ar[d] & 0 \ar[d] & \\
    0 \ar[r] & A_1 \ar[r] \ar[d] & A_2 \ar[r]^r \ar[d] & A_3 \ar[r] \ar[d] & 0 \\
    0 \ar[r] & B_1 \ar[r] \ar[d] & B_2 \ar[r]^{\eta} \ar[d]^{\sigma} & B_3 \ar[r] \ar[d] & 0 \\
    0 \ar[r] & C_1 \ar[r] \ar[d] & C_2 \ar[r] \ar[d] & C_3 \ar[r] \ar[d] & 0 \\
    & 0 & 0 & 0 &}
\]
where algebras $B_1, B_2, B_3, C_1, C_2$ and $C_3$ are unitals, and the morphism between them are unit preserving. We also suppose every square to be commutative, every columns and rows exact. \\
The K-theory connection map of the second column $K_1(C_2) \longrightarrow K_0(A_2)$ will be denoted $\delta$. \\ \\
From such a diagram we can produce a short exact sequence,
\[
    \xymatrix{0 \ar[r] & A_1 \ar[r] & B_2 \ar[r]^{ \! \! \! \! \! \! \! \! \! \! \sigma \oplus \eta} & C_2 \underset{C_3}{\oplus} B_3 \ar[r] & 0}.
\]
We call it the $\textit{diagonal sequence}$. Usually we denote $\sigma \oplus \eta := \sigma_{diag}$, $\eta$ being understood. \\ \\
Our funding result is a link between $\sigma$-invertibility and $\sigma_{diag}$-invertibility. To prove it we recall a technical lemma:
\begin{lem}\label{lem:unitaryLift}
    Let $\varphi : A \longrightarrow B$ a surjective unit preserving morphism of unital $C^*$-algebras. \\
    Then $\varphi(\calU_0(A)) = \calU_0(B)$, with $\calU_0$ being the set of unitary elements homotopic to identity.
\end{lem}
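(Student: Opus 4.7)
The plan is to prove the two inclusions separately. The inclusion $\varphi(\calU_0(A)) \subseteq \calU_0(B)$ is essentially free: if $u \in \calU_0(A)$ with continuous path $(u_t)_{t \in [0,1]}$ from $1_A$ to $u$ inside $\calU(A)$, then $(\varphi(u_t))_{t \in [0,1]}$ is a continuous path in $\calU(B)$ (since $\varphi$ is a unit preserving $*$-morphism, it sends unitaries to unitaries) joining $1_B = \varphi(1_A)$ to $\varphi(u)$.

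The substantive direction is $\calU_0(B) \subseteq \varphi(\calU_0(A))$. The strategy I would use is the standard exponential decomposition of the connected component of the identity in the unitary group of a unital $C^*$-algebra. More precisely, I would rely on the classical fact that every $v \in \calU_0(B)$ can be written as a finite product $v = e^{i h_1} e^{i h_2} \cdots e^{i h_n}$ with self-adjoint elements $h_j \in B$. Then it suffices to lift each $h_j$ to a self-adjoint element of $A$ and take the corresponding product of exponentials.

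The key technical step is the self-adjoint lift: given $h = h^* \in B$, by surjectivity of $\varphi$ pick any $a \in A$ with $\varphi(a) = h$, and set $k := \tfrac{a + a^*}{2}$. Since $\varphi$ is a $*$-morphism one has $\varphi(k) = \tfrac{h + h^*}{2} = h$, and $k$ is self-adjoint by construction. Then $e^{ik}$ is a unitary of $A$, and the path $t \mapsto e^{itk}$ shows $e^{ik} \in \calU_0(A)$. Moreover $\varphi(e^{ik}) = e^{i\varphi(k)} = e^{ih}$ by continuity of $\varphi$ and the series definition of the exponential.

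Applying this lifting to each $h_j$ produces self-adjoint $k_j \in A$ with $\varphi(k_j) = h_j$, and the element $u := e^{ik_1} \cdots e^{ik_n}$ lies in $\calU_0(A)$ (as a product of elements of $\calU_0(A)$, which is a subgroup) and satisfies $\varphi(u) = v$. The only mildly delicate point, and really the only place where something nontrivial happens, is the existence of the exponential decomposition of elements of $\calU_0(B)$; once this is granted, everything else is formal manipulation with the $*$-structure and continuity of $\varphi$.
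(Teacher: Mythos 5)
Your proof is correct and is exactly the argument the paper is alluding to when it says the lemma "is a consequence of continuous functional calculus": the exponential decomposition of $\calU_0(B)$ into finite products $e^{ih_1}\cdots e^{ih_n}$ is itself obtained via continuous functional calculus (taking logarithms of unitaries near $1$), and the self-adjoint lift $k=(a+a^*)/2$ together with $\varphi(e^{ik})=e^{i\varphi(k)}$ is the standard way to finish. No meaningful divergence from the paper's intended route.
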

This result is a consequence of continuous functionnal calculus. \\ \\
We can now state our funding result.
\begin{thm}
Let $T \in B_2$ an element $\sigma$-invertible (i.e $ \sigma(T)$ is invertible in $C_2$), the following propositions are equivalent:
\begin{enumerate}
    \item $K_0(r)(\delta [\sigma(T)]_1) = 0$
    \item $\exists N > 0, \exists T' \in M_N(B_2)$ with $\sigma(T')$ invertible, $[\sigma(T)]_1 = [\sigma(T')]_1$ and $\eta(T')$ invertible.
\end{enumerate}
    In particular $\sigma_{diag}(T')$ will be invertible in that case.
\end{thm}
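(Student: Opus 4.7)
The plan is to combine naturality of the $K$-theoretic connecting map with the classical principle that vanishing of the index is the precise obstruction to stably lifting an invertible element through a surjection of $C^*$-algebras.

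\textbf{Setting up naturality.} Write $\bar\eta : C_2 \to C_3$ for the map induced by $\eta$ on the bottom row, and $\delta' : K_1(C_3) \to K_0(A_3)$ for the connecting map of the third column. The second and third columns form a morphism of short exact sequences, so naturality of the six-term sequence yields
\[
    K_0(r) \circ \delta = \delta' \circ K_1(\bar\eta) .
\]
In particular, condition $(1)$ is equivalent to $\delta'[\bar\eta \sigma(T)]_1 = 0$ in $K_0(A_3)$.

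\textbf{$(1) \Rightarrow (2)$.} Applying the classical lifting principle (vanishing of the index equals stable invertible liftability) to the third column produces $N \geqslant 1$ and an invertible $v \in M_N(B_3)$ whose image in $M_N(C_3)$ equals $\bar\eta\sigma(T) \oplus 1_{N-1}$. The difference $v - \eta(T \oplus 1_{N-1}) \in M_N(B_3)$ vanishes modulo $C_3$, so by exactness of the third column it lies in $M_N(A_3)$, and by surjectivity of $r : A_2 \to A_3$ it lifts to some $a \in M_N(A_2)$. Set $T' := T \oplus 1_{N-1} + a \in M_N(B_2)$. Commutativity of the diagram forces $\eta|_{A_2} = r$ (as a map into $B_3$), so $\eta(T') = v$ is invertible; exactness of the second column gives $A_2 = \ker \sigma$, hence $\sigma(T') = \sigma(T) \oplus 1_{N-1}$ is invertible with $[\sigma(T')]_1 = [\sigma(T)]_1$ in $K_1(C_2)$. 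Pullback compatibility of $(\sigma(T'), \eta(T'))$ is immediate from the construction of $v$, so $\sigma_{diag}(T')$ is invertible in $M_N(C_2 \underset{C_3}{\oplus} B_3)$.

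\textbf{$(2) \Rightarrow (1)$.} The diagonal short exact sequence maps into the second-column short exact sequence via the inclusion $A_1 \hookrightarrow A_2$ on kernels, the identity on $B_2$, and the projection $p_1 : C_2 \underset{C_3}{\oplus} B_3 \to C_2$ on quotients. Naturality gives $\delta \circ K_1(p_1) = (A_1 \hookrightarrow A_2)_* \circ \delta_{diag}$, where $\delta_{diag}$ denotes the connecting map of the diagonal sequence. Since $[\sigma(T)]_1 = [\sigma(T')]_1 = K_1(p_1)[\sigma_{diag}(T')]_1$, the class $\delta[\sigma(T)]_1$ lies in the image of $K_0(A_1) \to K_0(A_2)$, which by exactness of the top row is $\ker K_0(r)$. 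Hence $K_0(r)\delta[\sigma(T)]_1 = 0$.

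\textbf{Main obstacle.} The crucial non-formal input is the classical lifting principle for invertibles through a $C^*$-algebra surjection; once it is in hand, everything reduces to diagram chasing, using the identification $A_2 = \ker\sigma$, surjectivity of $r$, and $\eta|_{A_2} = r$ to transport the $B_3$-level lift $v$ back into $B_2$. Lemma \ref{lem:unitaryLift} could be invoked if one wishes to upgrade invertible lifts to unitary ones, but the statement as given only requires invertibility.
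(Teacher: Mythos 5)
Your proof is correct, and it works, but the route through $(1) \Rightarrow (2)$ is genuinely different from the paper's. Where you invoke the "classical lifting principle" (vanishing of $\partial_1[u]_1$ iff $u \oplus 1_m$ lifts to an invertible, applied to the single column $0 \to A_3 \to B_3 \to C_3 \to 0$) and then transport the resulting invertible $v \in M_N(B_3)$ back to a correction $a \in M_N(A_2)$ via surjectivity of $r$ and the identity $\eta|_{A_2} = r$, the paper instead \emph{reproves} that lifting principle inside the $3\times 3$ diagram: it reduces to $\sigma$-unitaries by polar decomposition, lifts through $K_1(\gamma)$ to get $V \in M_\infty(B_2)$ with $\eta(V)$ unitary, converts the $K_1$-equality $[\eta_C\sigma(T)]_1 = [\eta_C\sigma(V)]_1$ into a genuinely homotopic-to-identity unitary $W \in \mathcal U_0(M_\infty(C_2))$ via Lemma \ref{lem:unitaryLift}, peels off a correction $b_1 \in M_\infty(B_1)$ using exactness of the bottom row and surjectivity of $\alpha$, and finally defines $T' := (V^* \oplus 1_M) - \mu_B(b_1^*)(T \oplus 1_N)$. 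Your $T' = T \oplus 1_{N-1} + a$ is a quite different element — built from the original $T$ plus a $\ker\sigma$-correction — whereas the paper's $T'$ is built from the lift $V$ with a $B_1$-level twist. Both are valid; yours is shorter and cleaner because it delegates the Whitehead/unitary-lift machinery to the black-boxed lifting theorem, while the paper's version is more self-contained and makes explicit how Lemma \ref{lem:unitaryLift} enters. For $(2) \Rightarrow (1)$ you go through the diagonal sequence and the inclusion $A_1 \hookrightarrow A_2$; the paper treats this direction as an immediate naturality chase (and indeed a one-line route exists: $K_0(r)\delta[\sigma(T)]_1 = \delta_3[\gamma\eta(T')]_1 = 0$ because $\gamma\eta(T')$ lifts to the invertible $\eta(T')$). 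Both work. One small thing worth flagging: since the lifting principle you invoke is itself proved from Lemma \ref{lem:unitaryLift} and the Whitehead lemma, your argument does not sidestep that machinery, it only relocates it.
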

The map $K_0(r) \circ \delta$, that we denote $Ind_{\partial}$, then encode an obstruction. The obstruction for a $\sigma$-invertible element to be pertubated into a $\sigma_{diag}$-invertible element.

\begin{proof}
    First, if $\sigma(T)$ is invertible, using the polar decompsition $\sigma(T) = \vert \sigma(T) \vert \omega(\sigma(T))$, and surjectivity of $\sigma$, there exists $V \in B_2$ such that $[\sigma(T)]_1 = [\omega(\sigma(T))]_1 = [\sigma(V)]_1$, $\sigma(V)$ being a unitary element. \\
    Then it is enough to show the theorem for $\sigma$-unitary elements (i.e elements $T \in B_2$ such that $\sigma(T)$ is unitary in $C_2$). \\ \\
    The implication $\underline{ (2) \Rightarrow (1)}$ is straightforward using squares commmutativity and naturality of connecting morphisms. We do focus on the reciprocal. \\ \\
	 $\underline{ (1) \Rightarrow (2)}$:
	 We need to describe the last diagram more precisely, we add the following notations:
	 % https://q.uiver.app/#q=WzAsMjEsWzEsMCwiMCJdLFsxLDEsIkFfMSJdLFsxLDIsIkJfMSJdLFsxLDMsIkNfMSJdLFsxLDQsIjAiXSxbMiwwLCIwIl0sWzIsMSwiQV8yIl0sWzIsMiwiQl8yIl0sWzIsMywiQ18yIl0sWzIsNCwiMCJdLFszLDAsIjAiXSxbMywxLCJBXzMiXSxbMywyLCJCXzMiXSxbMywzLCJDXzMiXSxbMyw0LCIwIl0sWzAsMSwiMCJdLFs0LDEsIjAiXSxbMCwyLCIwIl0sWzQsMiwiMCJdLFswLDMsIjAiXSxbNCwzLCIwIl0sWzAsMV0sWzEsMl0sWzIsMywiXFxhbHBoYSJdLFszLDRdLFs1LDZdLFs2LDddLFs3LDgsIlxcc2lnbWEiXSxbOCw5XSxbMTAsMTFdLFsxMSwxMl0sWzEyLDEzLCJcXGdhbW1hIl0sWzEzLDE0XSxbMTUsMV0sWzEsNl0sWzYsMTEsInIiXSxbMTEsMTZdLFsxNywyXSxbMiw3LCJcXG11X0IiXSxbNywxMiwiXFxldGEiXSxbMTIsMThdLFsxOSwzXSxbMyw4LCJcXG11X0MiXSxbOCwxMywiXFxldGFfQyJdLFsxMywyMF1d
\[\begin{tikzcd}[sep=small]
	& 0 & 0 & 0 \\
	0 & {A_1} & {A_2} & {A_3} & 0 \\
	0 & {B_1} & {B_2} & {B_3} & 0 \\
	0 & {C_1} & {C_2} & {C_3} & 0 \\
	& 0 & 0 & 0,
	\arrow[from=1-2, to=2-2]
	\arrow[from=1-3, to=2-3]
	\arrow[from=1-4, to=2-4]
	\arrow[from=2-1, to=2-2]
	\arrow[from=2-2, to=2-3]
	\arrow[from=2-2, to=3-2]
	\arrow["r", from=2-3, to=2-4]
	\arrow[from=2-3, to=3-3]
	\arrow[from=2-4, to=2-5]
	\arrow[from=2-4, to=3-4]
	\arrow[from=3-1, to=3-2]
	\arrow["{\mu_B}", from=3-2, to=3-3]
	\arrow["\alpha", from=3-2, to=4-2]
	\arrow["\eta", from=3-3, to=3-4]
	\arrow["\sigma", from=3-3, to=4-3]
	\arrow[from=3-4, to=3-5]
	\arrow["\gamma", from=3-4, to=4-4]
	\arrow[from=4-1, to=4-2]
	\arrow["{\mu_C}", from=4-2, to=4-3]
	\arrow[from=4-2, to=5-2]
	\arrow["{\eta_C}", from=4-3, to=4-4]
	\arrow[from=4-3, to=5-3]
	\arrow[from=4-4, to=4-5]
	\arrow[from=4-4, to=5-4]
\end{tikzcd}\]
and the connection map of the third column $K_1(C_3) \rightarrow K_0(A_3)$ is denoted $\delta_3$. \\ \\
	 Let $T \in B_2$ a unitary element such that $K_0(r)(\delta [\sigma(T)]_1) = 0$. Using connections naturality, we have: $0 = K_0(r)(\delta [\sigma(T)]_1) = \delta_3 K_1(\eta_C)([\sigma(T)]_1)$. \\
	 Then exactness of the 6 term exact sequence associated to the third column gives us a lift through $K_1(\gamma)$. The map $\eta$ being surjective it could be chosen a $V \in M_{\infty}(B_2)$ such that $\eta(V)$ is unitary. For such a $V$ we have:
	 \[
	 	K_1(\gamma)([\eta(V)]_1) = K_1(\eta_C)([\sigma(T)]_1).
	 \]
	 Using this equality, we can write $[\eta_C(\sigma(T))]_1 = K_1(\eta_C)([\sigma(T)]_1) = K_1(\gamma)([\eta(V)]_1) = [\gamma \circ \eta(V)]_1 = [\eta_C(\sigma(V))]_1$, which means that $\eta_C(\sigma(T))$ and $\eta_C(\sigma(V))$ are stably homotopic. \\ \\
	 Multiplying this homotopy by the stabilization of $\eta_C(\sigma(V))^{-1} = \eta_C(\sigma(V))^* = \eta_C(\sigma(V^*))$ we get an homotopy from $\eta_C \circ \sigma((T \oplus 1_N)(V^* \oplus 1_M)))$ to the unit ($N, M \in \N$). \\ \\
	 From lemma \ref{lem:unitaryLift}, there exists a unitary (homotopic to the unit) lift $W \in \calU_0(M_{\infty}(C_2))$:
	 \[
	 	\eta_C \circ \sigma((T \oplus 1_N)(V^* \oplus 1_M)) = \eta_C(W).
	 \]
	 Exactness ($Ker \, \eta_C = Im \, \mu_C$), $\alpha$ surjectivity and the commutativity ($\mu_C \circ \alpha = \sigma \circ \mu_B$) gives us another lift $b_1 \in M_{\infty}(B_1)$ such that: $\sigma(\mu_B(b_1)) + W = \sigma((T \oplus 1_N)(V^* \oplus 1_M)))$. \\ \\
	 From this expression, we extract $W^* \sigma(T \oplus 1_N) = \sigma((V^* \oplus 1_M) - \mu_B(b_1^*)(T \oplus 1_N))$. \\ \\
	 Then we set $\chi_T := \mu_B(b_1^*)(T \oplus 1_N)$, and $T' := (V^* \oplus 1_M) - \chi_T$ in such a way that we now have:
	 \[
	 	W^*\sigma(T \oplus 1_N) = \sigma((V^* \oplus 1_M) - \chi_T) = \sigma(T').
	 \] \\
	 This $T'$ is indeed the one we are looking for:
	 \begin{enumerate}
	 	\item $\sigma(T') = W^* \sigma(T \oplus 1_N)$ which is indeed a unitary element
	 	\item The element $W^*$ is unitary and homotopic to the identity, then $[W^*]_1 = 0$, which implies that $[\sigma(T')]_1 = [W^*]_1 + [\sigma(T \oplus 1_N)]_1 = [\sigma(T)]_1$
	 	\item Finally, because $\eta(\chi_T) = \eta(\mu_B(b_1^*)(T \oplus 1_N)) = \big(\eta \circ \mu_B(b_1^*)\big) \big(\eta(T \oplus 1_N)\big) = 0$, then $\eta(T') = \eta((V^* \oplus 1_M) - \chi_T) = \eta(V^* \oplus 1_M) = \eta(V^*) \oplus 1_M$ which is indeed a unitary element.
	 \end{enumerate}
\end{proof}

\subsection{Relative ellipticity}
For this section we will use the words \textit{singular manifolds} and \textit{non singular manifolds} without defining them. This is because the following discussion could be adapted in many contexts. In the next sections we will make a precise meaning of those notions to launch some computations. For now you can think of a \textit{singular manifold} as a non closed one, a \textit{non singular manifold} as a closed manifold. \\ \\
We recall that a Lie groupoid $\xymatrix{G \dar[r] & G^{(0)}}$ comes with a pseudodifferential calculus which allows us to write the sequence
\[
\xymatrix{0 \ar[r] & C^*(G) \ar[r] & \overline{\psi^0}(G) \ar[r]^{\sigma} & C_0(S^*G) \ar[r] & 0},
\]
where $\sigma$ denote the operator symbol. See \cite{NWX99} or \cite{DS19} for more details. \\
We suppose such a smooth Lie groupoid associated to a singular manifold in a way that we have a saturated decomposition in two smooth Lie groupoids:
\[
\label{ouvertferme}
\xymatrix{
G \dar[d] & = & G_U \dar[d] & \sqcup & G_F \dar[d] \\ G^{(0)} & = & U & \sqcup & F,}
\]
where $U$ is an open subset of $G^{(0)}$ and $F$ a closed one. \\
In particular this gives a short exact sequence (see for example \cite{Wil19}):% https://q.uiver.app/#q=WzAsNSxbMCwwLCIwIl0sWzEsMCwiQ14qKEdfVSkiXSxbMiwwLCJDXiooRykiXSxbMywwLCJDXiooR19GKSJdLFs0LDAsIjAiXSxbMCwxXSxbMSwyXSxbMiwzXSxbMyw0XV0=
\[\begin{tikzcd}
	0 & {C^*(G_U)} & {C^*(G)} & {C^*(G_F)} & 0.
	\arrow[from=1-1, to=1-2]
	\arrow[from=1-2, to=1-3]
	\arrow[from=1-3, to=1-4]
	\arrow[from=1-4, to=1-5]
\end{tikzcd}\]
Here the idea is to think of $G_U$ as describing the non singular part of the manifold, and $G_F$ the singular one. Each of those groupoids have their own pseudodifferential calculus which are related, forming the following commutative diagram: \label{gauffre}
    \[
    \xymatrix{& 0 \ar[d] & 0 \ar[d] & 0 \ar[d] & \\
    0 \ar[r] & C^*(G_U) \ar[r] \ar[d] & C^*(G) \ar[r]^{r} \ar[d] & C^*(G_F) \ar[r] \ar[d] & 0 \\
    0 \ar[r] & \overline{\psi^0}(G_U) \ar[r] \ar[d] & \overline{\psi^0}(G) \ar[r] \ar[d]^{\sigma} & \overline{\psi^0}(G_F) \ar[r] \ar[d] & 0 \\
    0 \ar[r] & C_0(S^*G_U) \ar[r] \ar[d] & C_0(S^*G) \ar[r] \ar[d] & C_0(S^*G_F) \ar[r] \ar[d] & 0\\
    & 0 & 0 & 0 &.}
    \]
    
In such a context, recalling that an operator $T \in \overline{\psi^0}(G)$ is called \textit{elliptic on $G$} if $\sigma(T)$ is invertible in $C_0(S^*G)$, we define now a relative notion of ellipticity.

\begin{defi}\label{suiteDiag}
From a groupoid $G = G_U \sqcup G_F$ as above, we can define the diagonal sequence
\[
	\xymatrix{ 0 \ar[r] & C^*(G_U) \ar[r] & \overline{\psi^0}(G) \ar[r]^{\sigma_{diag}} &  A_F \ar[r] & 0},
\]
where $A_F = C_0(S^*G) \underset{C_0(S^*G_F)}{\oplus} \overline{\psi^0}(G_F)$. \\
An elliptic element which is moreover $\sigma_{diag}$ invertible is called \textit{fully elliptic} on $G$ (relatively to $U$ and $F$).
\end{defi}
Using that terminology, we can restate our $C^*$-algebraic result in the framework of elliptic operators on a groupoid with a decomposition:
\begin{thm}
For $T \in \overline{\psi^0}(G)$ an elliptic operator on $G$, the following are equivalent:
	\begin{enumerate}
		\item $Ind_{\partial}([\sigma(T)]_1) = 0$
		\item there exists $T'$ in $\overline{\psi^0}(G)$ fully elliptic, with $[\sigma(T)]_1 = [\sigma(T')]_1$.
	\end{enumerate}
\end{thm}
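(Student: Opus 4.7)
The plan is to recognize this statement as a direct specialization of the abstract $C^*$-algebraic obstruction theorem proved earlier in the section, applied to the commutative $3 \times 3$ diagram of $C^*$-algebras built from the groupoid decomposition $G = G_U \sqcup G_F$. Explicitly, I would identify in the abstract diagram $A_i = C^*(G_\bullet)$, $B_i = \overline{\psi^0}(G_\bullet)$ and $C_i = C_0(S^*G_\bullet)$, for $\bullet$ equal to $U$, nothing, and $F$ in the first, second and third columns respectively. The horizontal map $\eta \colon B_2 \to B_3$ of the abstract diagram corresponds to the restriction of pseudodifferential operators $r \colon \overline{\psi^0}(G) \to \overline{\psi^0}(G_F)$, and the vertical map $\sigma \colon B_2 \to C_2$ corresponds to the principal symbol of $G$, so that the abstract $\sigma_{diag}$ is exactly the diagonal symbol of Definition~\ref{suiteDiag}.

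Next, I would check the hypotheses of the abstract theorem one row and one column at a time. Exactness of the top row is the exactness of the groupoid $C^*$-algebra sequence associated to the saturated open/closed decomposition of $G^{(0)}$, already invoked in the excerpt. Exactness of the middle row (for $\overline{\psi^0}$) follows from the same decomposition combined with the standard behaviour of the pseudodifferential calculus under saturation, and exactness of the bottom row (for $C_0(S^*\cdot)$) is just the $C_0$ sequence for the open set $S^*G_U \subset S^*G$. Exactness of each column is the principal symbol short exact sequence of Nistor--Weinstein--Xu applied to $G_U$, $G$, $G_F$. Commutativity of all squares is the naturality of the symbol and of the restriction, which the excerpt has already displayed.

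With these verifications in hand, the previous theorem applies verbatim to any $\sigma$-invertible $T \in \overline{\psi^0}(G)$, i.e.\ any elliptic operator on $G$. The obstruction $Ind_\partial = K_0(r) \circ \delta$ is the same map in both statements, so condition $(1)$ transports directly. For $(2)$, the abstract conclusion produces $T'$ with $\sigma(T')$ invertible (elliptic on $G$), $\eta(T') = r(T')$ invertible (elliptic on $G_F$), and $[\sigma(T')]_1 = [\sigma(T)]_1$. By construction of the pullback $A_F = C_0(S^*G) \oplus_{C_0(S^*G_F)} \overline{\psi^0}(G_F)$, the pair of invertibilities $\sigma(T')$ and $\eta(T')$ is exactly the invertibility of $\sigma_{diag}(T')$, i.e.\ full ellipticity of $T'$.

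I expect no serious obstacle here: the abstract result does the heavy lifting. Two points must nonetheless be mentioned. First, the $T'$ produced by the abstract theorem lives a priori in $M_N(\overline{\psi^0}(G))$ for some $N$; this is harmless since both full ellipticity and the $K_1$-class $[\sigma(\cdot)]_1$ are stable invariants, and is implicit in how the groupoid statement is phrased. Second, the exactness of the three rows at all three algebraic levels deserves an explicit reference, as it is the one place where a statement about pseudodifferential calculi on Lie groupoids (rather than pure K-theory) is genuinely used.
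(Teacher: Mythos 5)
Your proposal is correct and matches the paper's approach exactly: the paper states this theorem without proof, treating it as an immediate instance of the abstract $C^*$-algebraic obstruction theorem applied to the $3\times 3$ pseudodifferential diagram built from the decomposition $G = G_U \sqcup G_F$, which is precisely the specialization you spell out. Your remark about the $M_N$ stabilization being suppressed in the geometric statement is a fair observation, consistent with the standard convention the paper is silently invoking.
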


\begin{rem}\label{rk:symbVersTgt}
	For our purposes we will need to realise the map $Ind_{\partial}$ in a more geometrical way.
	To this end we recall that from a Lie groupoid $\xymatrix{G \dar[r] & G^{(0)}}$, we can define its tangent groupoid $\xymatrix{G^{tan} \dar[r] & G^{(0)} \times [0, 1]}$, $G^{tan}$ being the restriction above $G^{(0)} \times [0, 1]$ of the deformation to the normal cone of the manifold $G$ with respect to $G^{(0)}$ ($G^{(0)}$ seen as a submanifold of $G$ using the canonical embedding). \\
	
	Then using this last notion, from an elliptic operator $T \in \overline{\psi^0}(G)$ we can associate a class $[\sigma_T]_0 \in K_0(C^*(G^{tan}))$ in such a way that $Ind_{\partial}([\sigma(T)]_1) = r \circ e_1([\sigma_T]_0)$ where $e_1$ is defined as the restriction to the sub groupoid : $K_0(C^*(G^{tan})) \overset{e_1}{\rightarrow} K_0(C^*(G))$. Under this extra construction, the boundary index consists only in an evaluation to $G^{(0)} \times \{1\}$.
	% https://q.uiver.app/#q=WzAsNyxbMSwyLCJLXzEoQ18wKFNeKkcpKSJdLFsxLDAsIktfMChDXiooRykpIl0sWzMsMSwiS18wKEdee3Rhbn0pIl0sWzMsMiwiS18wKEFHKSJdLFswLDIsIltcXHNpZ21hKFQpXV8xIl0sWzQsMl0sWzQsMSwiW1xcc2lnbWFfVF1fMCJdLFswLDFdLFsyLDEsImVfMSIsMl0sWzIsMywiZV8wIFxcY29uZyJdLFswLDNdLFs0LDAsIlxcaW4iLDEseyJzdHlsZSI6eyJib2R5Ijp7Im5hbWUiOiJub25lIn0sImhlYWQiOnsibmFtZSI6Im5vbmUifX19XSxbNiwyLCJcXG5pIiwxLHsic3R5bGUiOnsiYm9keSI6eyJuYW1lIjoibm9uZSJ9LCJoZWFkIjp7Im5hbWUiOiJub25lIn19fV1d
\[\begin{tikzcd}[ampersand replacement=\&]
	\& {K_0(C^*(G))} \\
	\&\&\& {K_0(G^{tan})} \& {[\sigma_T]_0} \\
	{[\sigma(T)]_1} \& {K_1(C_0(S^*G))} \&\& {K_0(AG)} \& {}
	\arrow["{e_1}"', from=2-4, to=1-2]
	\arrow["{e_0 \cong}", from=2-4, to=3-4]
	\arrow["\ni"{description}, draw=none, from=2-5, to=2-4]
	\arrow["\in"{description}, draw=none, from=3-1, to=3-2]
	\arrow[from=3-2, to=1-2]
	\arrow[from=3-2, to=3-4]
\end{tikzcd}\]
Both maps $Ind_{\partial}$ and $r_{\partial} \circ Ind^X := r \circ e_1$ will be called \textit{boundary index}.
\end{rem}

In this paper, this is this last expression we will use to caracterize the vanishing of the boundary index $Ind_{\partial}$ and then the ability for an elliptic operator to be perturbated into a fully elliptic one.

\begin{rem} \label{grpdFred}
	We can push this relative approach to produce more index maps as follows. \\
	For $G = G_U \bigsqcup G_F$ a saturated decomposition as before, we can define another saturated decomposition : $G^{tan} = G^{Fred} \bigsqcup G_F$, \, \, with $G^{Fred} := G^{tan} \setminus (G_F \times \{1\})$. \\ \\
	Then the restriction $G^{tan} \overset{e_1}{\rightarrow} G$ is compatible with the saturated decomposition in the sense that it restricts to
$
\left|
\begin{array}{l}
G^{Fred} \overset{e_1}{\rightarrow} G_U \\
G_F \overset{Id}{\rightarrow} G_F
\end{array}
\right.
$.
This produces a morphism between the associated diagonal sequences (Def \ref{suiteDiag}): \\
% https://q.uiver.app/#q=WzAsMTAsWzAsMCwiMCJdLFsxLDAsIkNeKihHXntGcmVkfSkiXSxbMiwwLCJcXG92ZXJsaW5le1xccHNpXjB9KEdee3Rhbn0pIl0sWzMsMCwiQV9GXntbMCwgMV19Il0sWzQsMCwiMCJdLFswLDEsIjAiXSxbMSwxLCJDXiooR19VKSJdLFsyLDEsIlxcb3ZlcmxpbmV7XFxwc2leMH0oRykiXSxbMywxLCJBX0YiXSxbNCwxLCIwIl0sWzAsMV0sWzEsMl0sWzIsM10sWzMsNF0sWzUsNl0sWzYsN10sWzcsOF0sWzgsOV0sWzEsNiwiZV8xIiwyXSxbMiw3LCJlXzEiLDJdLFszLDgsImVfMSIsMl1d
\[\begin{tikzcd}[ampersand replacement=\&]
	0 \& {C^*(G^{Fred})} \& {\overline{\psi^0}(G^{tan})} \& {A_F^{[0, 1]}} \& 0 \\
	0 \& {C^*(G_U)} \& {\overline{\psi^0}(G)} \& {A_F} \& 0,
	\arrow[from=1-1, to=1-2]
	\arrow[from=1-2, to=1-3]
	\arrow["{e_1}"', from=1-2, to=2-2]
	\arrow[from=1-3, to=1-4]
	\arrow["{e_1}"', from=1-3, to=2-3]
	\arrow[from=1-4, to=1-5]
	\arrow["{e_1}"', from=1-4, to=2-4]
	\arrow[from=2-1, to=2-2]
	\arrow[from=2-2, to=2-3]
	\arrow[from=2-3, to=2-4]
	\arrow[from=2-4, to=2-5]
\end{tikzcd}\]
where $A_F^{[0, 1]} := C_0(S^*G \times [0, 1]) \underset{C_0(S^*G_F)}{\bigoplus} \overline{\psi^0}(G_F)$ and $A_F := C_0(S^*G) \underset{C_0(S^*G_F)}{\bigoplus} \overline{\psi^0}(G_F)$. \\ \\
Noticing that $C_0(S^*G \times [0, 1[)$ is a contractible $C^*$-algebra, we have that $e_1 : A_F^{[0, 1]} \rightarrow A_F$ induces an isomorphism in K-theory. Denoting the upper row connection map $\delta^{tan}$, and noticing that the lower connection map $\delta^{diag}$ is the connection morphism of the diagonal sequence (def \ref{suiteDiag}) we have:
% https://q.uiver.app/#q=WzAsNCxbMCwwLCJLXzEoQV9GXntbMCwgMV19KSJdLFsxLDAsIktfMChDXiooR157RnJlZH0pKSJdLFswLDEsIktfMShBX0YpIl0sWzEsMSwiS18wKENeKihHX1UpKSJdLFswLDEsIlxcZGVsdGFee3Rhbn0iXSxbMCwyLCJcXGNvbmciLDJdLFsyLDMsIlxcZGVsdGEiXSxbMSwzLCJlXzEiLDJdXQ==
\[\begin{tikzcd}[ampersand replacement=\&]
	{K_1(A_F^{[0, 1]})} \& {K_0(C^*(G^{Fred}))} \\
	{K_1(A_F)} \& {K_0(C^*(G_U)).}
	\arrow["{\delta^{tan}}", from=1-1, to=1-2]
	\arrow["\cong"', from=1-1, to=2-1]
	\arrow["{e_1}"', from=1-2, to=2-2]
	\arrow["\delta^{diag}", from=2-1, to=2-2]
\end{tikzcd}\]
\end{rem}
Then if we start from a fully elliptic operator $T \in \overline{\psi^0}(G)$, it defines $[\sigma_{diag}(T)]_1 \in K_1(A_F)$. The associated diagonal index $\delta^{diag} [\sigma_{diag}(T)]_1$ is then computed through $G^{Fred} \overset{e_1}{\rightarrow} G_U$. \\
In particular if $G_U$ is the pair groupoid (resp. fibered pair groupoid) then it computes Fredholm indices (resp. family indices). \\
For example when in section \ref{indiceNc} we define $Ind_{nc}^X : K_0(C^*(\Gamma^{Fred}_{b, \mathfrak{f}}(X))) \overset{e_1}{\longrightarrow} K_0(C^*(X \underset{B}{\times} X)) \cong K^0_{top}(B)$ the restriction to $\{1\}$ of the Fredholm groupoid corresponds to a family index of a certain operator.
\newpage

%2.
\section{Family of manifolds with corners}
In this section we define the geometric context of family of manifolds with embedded corners. We suppose the notion of manifold with corners and embedded corner to be known. \\
We first make precise what we may call an \textit{embedded corner manifold oriented automorphism}.
\begin{defi}
	Let $Y$ be a manifold with embedded corners and $(r_j)_{j=1}^m$ its defining functions ($\Sigma_j := r_j^{-1}(0)$). \\
	We call \textit{oriented automorphism} of $Y$ the restriction to $Y$ of a diffeomorphism $f : \widetilde{Y} \rightarrow \widetilde{Y}$ endowed with a permutation $\sigma \in S_N$ such that $f^{-1}(\Sigma_j) = \Sigma_{\sigma(j)}$ for every $j \in \llbracket 1, m \rrbracket$. \\
	The orientation condition means that $f^{-1}( \{  r_j > 0 \} ) = \{ r_{\sigma(j)} > 0 \}$ and $f^{-1}( \{  r_j < 0 \} ) = \{ r_{\sigma(j)} < 0 \}$ for every $j \in \llbracket 1, m \rrbracket$. \\ \\
	The set of oriented automorphisms form a group denoted $Puff_+(Y)$.
\end{defi}

We will refer to the permutation $\sigma$ as \textit{the combinatoric} of $f$

\begin{rem}
	In such a context, an oriented automorphism $f \in Puff_+(Y)$ defines maps of pairs $f : (\widetilde{Y}, \Sigma_{\sigma(j)}) \rightarrow (\widetilde{Y}, \Sigma_j)$. This allows us to define oriented morphisms between the normal bundles of such pairs.
	Moreover it induces a smooth diffeomorphism $f : \overset{\circ}{Y} \rightarrow \overset{\circ}{Y}$.
\end{rem}
Using this automorphism notion, we can define a family of embedded corners manifold as follows:
\begin{defi}
	    Let $B$ a smooth manifold, $(\widetilde{Y}, (r_i)_{i=1}^m)$ an embedded corner manifold with its defining functions. 
        We call \textit{family of embedded corner manifold over $B$ with fiber $Y$} the data consisting of a smooth surjective submersion $\pi : \widetilde{X} \longrightarrow B$ such that :
    \begin{enumerate}
        \item We can cover $B$ with trivializing open subsets $U_{\alpha}$ such that the following diagram commutes : 
        $\xymatrix{\pi^{-1}(U_{\alpha}) \ar[r]_{\cong}^{\varphi_{\alpha}} \ar[d] & U_{\alpha} \times \widetilde{Y} \ar[ld]^{pr_1} \\ U_{\alpha}}$, with $\varphi_{\alpha}$ a smooth diffeomorphism.
        \item Transitions
        $\xymatrix{
        U_{\alpha \beta} \times \widetilde{Y} \ar@/^1.5pc/[rr]^{({\varphi_{\alpha}}_{\vert U_{\alpha \beta}}) \circ ({\varphi_{\beta}}_{\vert U_{\alpha \beta}})^{-1} := \, \varphi_{\alpha \beta}} \ar[dr]_{pr_1} &
        \pi^{-1}(U_{\alpha \beta}) \ar[l]_{\cong}^{{\varphi_{\beta}}_{\vert U_{\alpha \beta}}} \ar[r]^{\cong}_{{\varphi_{\alpha}}_{\vert U_{\alpha \beta}}} \ar[d]^{\pi} & 
        U_{\alpha \beta} \times \widetilde{Y} \ar[dl]^{pr_1} \\
        & U_{\alpha \beta} &
        }$
        are such that $\varphi_{\alpha \beta}(x, y) = (x, \Phi^x_{\alpha \beta}(y))$ where for each $x$ in $U_{\alpha}$ we have $\Phi^x_{\alpha \beta} \in Puff(\widetilde{Y})$.
    \end{enumerate}
\end{defi}

\begin{rem}{}
    Inside a given trivialisation, the transition morphisms $(\Phi^x_{\alpha \beta})_{x \in U_{\alpha \beta}} \subset Puff_+(Y) \subset Diff(\widetilde{Y})$ forms a continuous family of diffeomorphisms. \\
    The map
    \begin{tabular}{ccc}
        $Puff_+(Y)$ & $\longrightarrow$ & $\mathfrak{S}_m$ \\
         $f$ & $\mapsto$ & $\sigma_f$
    \end{tabular}
    being continous for the induced topology, it is constant on each connected component of $U_{\alpha \beta}$. We will simply denote $\sigma^x_{\alpha \beta} = \sigma_{\alpha \beta}$ supposing $U_{\alpha \beta}$ to be connected.
\end{rem}

\begin{prop}{}
    The total space of such a family $\pi : \widetilde{X} \longrightarrow B$ is a canonical extension of a manifold with corners (not embedded in general) denoted $X \subset \widetilde{X}$. Moreover the restriction $\pi_{\vert X} : X \longrightarrow B$ is still surjective.
\end{prop}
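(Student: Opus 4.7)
The plan is to build $X$ locally inside each trivialising chart, check it is compatibly defined on overlaps via the orientation condition, and then package the local data into a manifold with corners structure; the permutations $\sigma_{\alpha \beta}$ will be the precise obstruction to $X$ being embedded in $\widetilde{X}$.

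First I would set the local pieces. Inside $\widetilde{Y}$ the embedded corner manifold is $Y = \bigcap_{i=1}^m \{r_i \geqslant 0\}$. In each trivialisation $\varphi_{\alpha}: \pi^{-1}(U_{\alpha}) \xrightarrow{\cong} U_{\alpha} \times \widetilde{Y}$ I define
\[
X_{\alpha} := \varphi_{\alpha}^{-1}(U_{\alpha} \times Y) \subset \pi^{-1}(U_{\alpha}).
\]
Then I would check that the $X_{\alpha}$ agree on overlaps. On $U_{\alpha\beta}$, the transition has the form $\varphi_{\alpha\beta}(x,y) = (x, \Phi^x_{\alpha\beta}(y))$ with $\Phi^x_{\alpha\beta} \in Puff_+(\widetilde{Y})$ and associated combinatoric $\sigma_{\alpha \beta}$. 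The orientation condition gives $(\Phi^x_{\alpha\beta})^{-1}(\{r_j > 0\}) = \{r_{\sigma_{\alpha\beta}(j)} > 0\}$ and likewise for the zero level sets, hence $\Phi^x_{\alpha\beta}$ permutes the half-spaces $\{r_j \geqslant 0\}$ and restricts to a bijection $Y \to Y$. Therefore $\varphi_{\alpha\beta}$ maps $U_{\alpha\beta} \times Y$ onto itself, and $X_{\alpha} \cap \pi^{-1}(U_{\alpha\beta}) = X_{\beta} \cap \pi^{-1}(U_{\alpha\beta})$. This shows the local pieces glue to a well-defined subset $X \subset \widetilde{X}$.

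Next I would upgrade $X$ to a manifold with corners. Via $\varphi_{\alpha}$ each piece $X \cap \pi^{-1}(U_{\alpha})$ is identified with the product $U_{\alpha} \times Y$, which is canonically a manifold with corners of the same codimension as $Y$. Because $(x,y) \mapsto \Phi^x_{\alpha\beta}(y)$ is smooth jointly in $(x,y)$ and $\Phi^x_{\alpha\beta}$ sends boundary hyperfaces to boundary hyperfaces (according to $\sigma_{\alpha\beta}$), the restricted transitions are diffeomorphisms of manifolds with corners, so the local charts assemble into a global manifold with corners structure on $X$. The non-embeddedness is exactly the content of the non-trivial cocycle $\{\sigma_{\alpha\beta}\}$: the $m$ locally defined functions $r_i \circ pr_2 \circ \varphi_{\alpha}$ do not in general patch to global defining functions on $\widetilde{X}$, so $X$ carries corners without a global labelling of its hyperfaces.

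Finally, surjectivity of $\pi_{|X}$ is immediate: for $b \in U_{\alpha}$, the fibre $\pi^{-1}(b) \cap X = \varphi_{\alpha}^{-1}(\{b\} \times Y) \cong Y$ is non-empty. The main (and only subtle) obstacle is the bookkeeping of the permutations $\sigma_{\alpha\beta}$: once one observes that the orientation condition turns them into symmetries of the set $\{r_i\}_i$ that preserve the half-space structure, the rest of the argument is a routine gluing.
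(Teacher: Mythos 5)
Your proof is correct and follows essentially the same route as the paper's: build $X$ locally as $\varphi_{\alpha}^{-1}(U_{\alpha}\times Y)$, use the orientation condition on the transitions to glue, observe that the corner structure is local, and note surjectivity fibrewise. You simply spell out more carefully the role of the permutations $\sigma_{\alpha\beta}$ and why they obstruct global embeddedness, which the paper only gestures at.
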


\begin{proof}
    To build $X$ it is enough to notice that in each trivialisation $U_{\alpha}$ there is a natural restriction of $U_{\alpha} \times \widetilde{Y}$ to $U_{\alpha} \times Y$. The transitions being oriented, they restrict as well. Then we get $X$ as a gluing of local restrictions using the restricted cocycles. \\
    Being a manifold with corners is a local property, which is clearly true in a trivialisation (the cross product of a manifold with corners and a smooth manifold is a manifold with corners of the same codimension). Then $X$ is a manifold with corners. \\
    The restriction $\pi_{\vert X} : X \longrightarrow B$ is clearly surjective.
\end{proof}

In practice some families of manifold with embedded corners are better behavied than others. Those we will focus on will be the one where the total corner manifold $X$ is itself endowed with embedded corners. Indeed those last ones may be endowed with tubular neighborhoods around their faces, which is not the case in general (see the one corner drop, or the following example \ref{counterEx}).
\begin{rem}\label{tubNeighborhood}
	Following \cite[thm 2.12]{PW19} under Whitney (B) condition it is possible to define tubular neighborhoods of intersected hypersurfaces $H_I$ ($H_I = \underset{i \in I}{\cap} H_i$, $p := \vert I \vert)$ in $\widetilde{X}$ with respect to the submersion $\pi$. \\ That is to say there exists a neighborhood $V$ containing $H_I$ and a diffeomorpism $\varphi : H_I \times \R^p \longrightarrow V$ with $\varphi_{\vert H_I \times \{0\}^p} = Id_{H_I}$ and $\pi(\varphi(x, \lambda)) = \pi(x)$. \\
	In other words we have a morphism of family of manifold with embedded corner:
% https://q.uiver.app/#q=WzAsNCxbMSwxLCJCIl0sWzAsMCwiSF9pIFxcdGltZXMgXFxSIl0sWzIsMCwiViJdLFszLDAsIlxcd2lkZXRpbGRle1h9Il0sWzEsMCwiXFxwaSBcXGNpcmMgcHJfMSIsMl0sWzIsMCwiXFxwaSJdLFsxLDIsIlxcdmFycGhpIl0sWzMsMiwiXFxzdWJzZXQiLDEseyJzdHlsZSI6eyJib2R5Ijp7Im5hbWUiOiJub25lIn0sImhlYWQiOnsibmFtZSI6Im5vbmUifX19XV0=
\[\begin{tikzcd}[ampersand replacement=\&]
	{H_i \times \R} \&\& V \& {\widetilde{X}} \\
	\& B
	\arrow["\varphi", "\cong"', from=1-1, to=1-3]
	\arrow["{\pi \circ pr_1}"', from=1-1, to=2-2]
	\arrow["\pi", from=1-3, to=2-2]
	\arrow["\subset"{description}, draw=none, from=1-4, to=1-3]
\end{tikzcd}\]
in this sense we can say that locally an hypersurfaces intersection of a family with embedded corners is a "suspension" of a lower codimension family with embedded corners. \\

The Whitney (B) condition is a consequence of the regularity conditions on the defining functions: for $H_I =  \underset{i \in I}{\cap} \{ \rho_i = 0 \}$ we have $(\forall i \in I,  \rho_i(x) = 0) \Rightarrow ((d_x \rho_i)_{i \in I} $ independant family). See \cite[def p.25]{PW19} or its geometrical equivalent the $(b_S)$ condition in \cite{Tro78} and \cite{Tro79}. \\
\end{rem}

Here are few examples:

\begin{ex}{Trivial family} \\
	Every trivial family can be endowed with embedded corners. If $r : \widetilde{Y} \rightarrow \R$ is a defining function for $Y$, then the trivial family $X := Y \times B \rightarrow B$ can be endowed with $r \circ pr_1$.
\end{ex}

\begin{ex}{Möbius band - 1/2 twisted squared solid torus} \\
	We can set $Y := [-1, 1] \subset \R$ with the defining functions $r_1(y) := 1 - y, r_2(y) := 1 + y$, $B = S^1$. \\
	Then we get the Möbius band by gluing those $Y$ fibers over the circle with transition functions being identity except for one where we set the transition $(y \in Y \mapsto -y \in Y) \in Puff_+(Y)$. \\
	In that case, the Möbius band is globally embedded with one hypersurface because in every trivialisation $U$ we can set $\rho_U(x, y) := r_1(y)r_2(y) = 1-y^2$. This function being invarient under the transitions, it glues on a globally defining function $\rho$ satisfying the embedded corners axioms. \\ \\
	Following the same process with $Y = [-1, 1]^2$ the defining functions $(y_1, y_2) \mapsto 1 \pm y_1$, $((y_1, y_2) \mapsto 1 \pm y_2$ and one transition beign equal to $(y_1, y_2) \mapsto (-y_1, -y_2)) \in Puff_+(Y)$ we get the same announced example, also globally endowed with embedded corners.
\end{ex}

\begin{ex}{1/4 twisted squared solid torus} \label{counterEx} \\
	Setting the same fiber $Y$ as the 1/2 twisted squared solid torus case but changing the non identity transition function to be $((y_1, y_2) \mapsto (-y_2, y_1)) \in Puff_+(Y)$ we get the 1/4 twisted squared solid torus. \\
	This example cannot be globally endowed with embedded corners for combinatoric reasons. As a manifold with corners it only has one codimension 1 face, and one codimension 2 face. If such a defining hypersurface existed, then it would have to auto intersect to create the codimension 2 face. And then it would not be an hypersurface anymore.
\end{ex}

%3.
\section{Monthubert groupoids for families}
From now on, except if specifically stated, the families $\pi : \widetilde{X} \longrightarrow B$ we consider will have their total space $\widetilde{X}$ endowed with embedded corners.

\subsection{Monthubert groupoid for families}

To introduce the main tool of the following section, we state some notations: \\ \\
%For $p \in \N$, $I \in \mathcal{P}(\llbracket 1, p \rrbracket)$, we set $\R^{+p}_I := \{ (x_1, \dots, x_p) \in \R^{+p} : x_i = 0 \Leftrightarrow i \in I \}$. \\ \\
Let $p \in \N^*$, for a set $A$ and $I \in \mathcal{P}(\llbracket 1, p \rrbracket)$ we denote by $A_I^p$ the product $A_1 \times \dots \times A_p$ where $A_i := $
$\left\{
    \begin{array}{ll}
        \{ 0 \} & \text{ if } i \in I \\
        A & \text{ if } i \notin I
    \end{array}
\right.$.\\
Moreover, every tuple $I \in \mathcal{P}(\llbracket 1, p \rrbracket)$ will be considered \underline{sorted by ascending order}. \\

In the context of a tubular neighborhood containing a given face $f \in \mathcal{F}_p(X)$ and whose fibers are subsets of $\pi$-fibers $U \subset \widetilde{X}$  (see rk  \ref{tubNeighborhood} for the existence), restrictions to each face $g \in \mathcal{F}_k(X)$ ($k \leqslant p$) such that $f \subset \overline{g}$ provides family of manifolds with corners isomorphisms $U \cap g \cong f \times \R^{+*p}_I$ ($I \in \mathcal{P}(\llbracket 1, p \rrbracket, \vert I \vert = p$, $I$ sorted). Because knowing $f$ and the tubular neighborhood, such a face can be identified only using $I$, it seems reasonable to call such a face $g_I$. \\
In particular with these notations: $g_{\emptyset} = \overset{\circ}{X}$ and $g_{\{1, \dots, p \}} = f$. Moreover, using the hypersurfaces indices, we have $g_I \subset H_I$. \\

For a family of manifolds with embedded corners $\pi : \widetilde{X} \longrightarrow B$ of codimension $d$, we denote its associated Monthubert groupoid as: \\
$\Gamma_{b, \mathfrak{f}}(X): = 
\overset{\circ}{X} \underset{B}{\times} \overset{\circ}{X} \,
 \underset{\sim}{\bigsqcup} 
 \left(\underset{g \in \mathcal{F}_1}{\sqcup} (g \underset{B}{\times} g \times \R^{*+}) \right) \,
 \underset{\sim}{\bigsqcup}
 \dots
 \underset{\sim}{\bigsqcup}
 \left(\underset{g \in \mathcal{F}_d}{\sqcup} (g \underset{B}{\times} g \times \R^{*+d})\right)$

The symbols $\underset{\sim}{\sqcup}$ corresponds to a disjoint union endowed with a non trivial topological and differential structure. \\ \\
Its differential structure is described as follows: \\
Let $f \in \mathcal{F}_p(X)$, in a tubular neighborhood $U$ around $f$, we have
\begin{align*}
    & \underset{k=0}{\overset{p}{\bigsqcupsim}}
    \left(
    \underset{I \in \mathcal{P}(\llbracket 1,  p \rrbracket), \vert I \vert = k}{\bigsqcup} (g_I \cap U) \underset{B}{\times} (g_I \cap U) \times \R^{+p}_{\{ 1, \dots, p \} \setminus I}
    \right) \\
    = \: \, &
    f \underset{B}{\times} f \times
    \underset{k=0}{\overset{p}{\bigsqcupsim}}
    \left(
    \underset{I \in \mathcal{P}(\llbracket 1,  p \rrbracket), \vert I \vert = k}{\bigsqcup} \R^{+*p}_I \times \R^{+*p}_I \times \R^{+*p}_{\{ 1, \dots, p \} \setminus I}
    \right)
\end{align*}
each member of the union is topologically described using the following homeomorphism: \\ \\
\begin{tabular}{ccc}
     $\R^{+*p}_I \times \R^{+*p}_I \times \R^{+*p}_{\{1 \dots p \} \setminus I}$ & $\longrightarrow$ & $\R^{+*}_I \times \R^{+*p}$ \\
     $(x, y, \lambda)$ & $\mapsto$ & $(x, a^I(x, y, \lambda))$ 
\end{tabular}
with $a^I_i(x, y, \lambda) = $
$\left\{
    \begin{array}{ll}
        \lambda_i & \text{ if } i \in I \\
        \frac{y_i}{x_i} & \text{ if } i \notin I
    \end{array}
\right.$
\\
The others disjoint unions (the ones where closures are also disjoint) are endowed with the trivial disjoint union topology.

\begin{ex}
	For instance, in the case $p=2$ (one face $f$ defined as the intersection of the closure of exactly two other codimension 1 faces), the Monthubert groupoid is written locally as:
\[
	(f \underset{B}{\times} f) \times \Big(\underset{I \in \{\emptyset, \{1\}, \{2\}, \{1, 2\}\}}{\bigsqcup} \R^{+*2}_I \times \R^{+*2}_I \times \R^{+*2}_{\{ 1, 2 \} \setminus I} \Big),
\]
the euclidian part endowed with the chart:
\[
	\begin{array}{ccccc}
		(x_1, x_2), & (y_1, y_2), & (0, 0) & \longmapsto & (x_1, x_2, \frac{y_1}{x_1}, \frac{y_2}{x_2}) \\
		(0, x_2), & (0, y_2), & (\lambda_1, 0) & \longmapsto & (x_1, x_2, \lambda_1, \frac{y_2}{x_2}) \\
		(x_1, 0), & (y_1, 0), & (0, \lambda_2) & \longmapsto & (x_1, x_2, \frac{y_1}{x_1}, \lambda_2) \\
		(0, 0), & (0, 0), & (\lambda_1, \lambda_2) & \longmapsto & (x_1, x_2, \lambda_1, \lambda_2).
	\end{array}
\]
\end{ex}

The local Monthubert Groupoid can also be described as follows:
\begin{prop}
    The previous homeomorphism extend in a groupoid homeomorphism 
    \begin{equation}\label{homeo:grpdActions}
        \xymatrix{
        \underset{k=0}{\overset{p}{\bigsqcupsim}}
    \left(
        \underset{I \in \mathcal{P}(\llbracket 1,  p \rrbracket), \vert I \vert = k}{\bigsqcup} \R^{+*p}_I \times \R^{+*p}_I \times \R^{+*p}_{\{ 1, \dots, p \} \setminus I}
    \right)
    \ar[r]^{\quad \quad \quad \quad \quad \quad \quad \cong} \dar[d] &
    \R^{+p} \rtimes \R^{+*p} \dar[d] \\
    \R^{+p} \ar@{=}[r] & \R^{+p}
    }
    \end{equation}
    with the groupoid associated to the right action $\R^{+p} \curvearrowleft \R^{+*p}$ defined by $t * \lambda := t \lambda$ with $(t\lambda)_i := t_i \lambda_i$.
\end{prop}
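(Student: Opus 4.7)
The plan is to show that the piecewise charts $\Phi_I(x,y,\lambda) := (x, a^I(x,y,\lambda))$ assemble into a single groupoid homeomorphism $\Phi$ onto the action groupoid $\R^{+p}\rtimes\R^{+*p}$. First I would make the right-hand side concrete: unit space $\R^{+p}$, arrow space $\R^{+p}\times\R^{+*p}$, with source $s(t,\alpha)=t$, range $r(t,\alpha)=t\alpha$ (componentwise product), composition $(t,\alpha)(t\alpha,\beta)=(t,\alpha\beta)$, and inverse $(t,\alpha)^{-1}=(t\alpha,\alpha^{-1})$.

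Next I would exploit the fact that $\R^{+p}$ stratifies as $\bigsqcup_I S_I$ with $S_I := \R^{+*p}_I$ indexed by the set of vanishing coordinates, and that this stratification is preserved by the right action since all entries of $\alpha$ are strictly positive. Hence $\R^{+p}\rtimes\R^{+*p}$ splits set-theoretically as $\bigsqcup_I (S_I\rtimes\R^{+*p})$, and $\Phi_I$ provides a bijection of the $I$-th piece of the left-hand side onto $S_I\rtimes\R^{+*p}$; its inverse sends $(x,\alpha)$ with $x\in S_I$ to the triple $(x,y,\lambda)$ where $\lambda_i=\alpha_i$ and $y_i=0$ for $i\in I$, and $\lambda_i=0$ and $y_i=x_i\alpha_i$ for $i\notin I$. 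The groupoid axioms then follow stratum-by-stratum by a direct case split on $i\in I$ vs.\ $i\notin I$: source matches as $x$; target $y$ coincides with $x\cdot a^I(x,y,\lambda)$ (for $i\in I$ both are zero; for $i\notin I$ one has $x_i\cdot(y_i/x_i)=y_i$); composition $(x,y,\lambda)(y,z,\mu)=(x,z,\lambda\mu)$ of the pair-groupoid-times-isotropy structure corresponds to $(x,a^I(x,y,\lambda))\cdot(y,a^I(y,z,\mu))=(x,a^I(x,z,\lambda\mu))$.

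The remaining step, which I expect to be the only non-formal one, is continuity of the global $\Phi = \bigsqcup_I \Phi_I$ across strata in the glued topology $\bigsqcupsim$. This should be essentially tautological: the gluing was introduced precisely so that the charts $\Phi_I$ together form a homeomorphism onto $\R^{+p}\times\R^{+*p}$. Concretely, a sequence $(x^n,y^n,\lambda^n)$ lying in stratum $I$ converges to a point in a deeper stratum $I'\supsetneq I$, by definition of the gluing, exactly when for every $i\in I'\setminus I$ the ratio $y^n_i/x^n_i$ converges to the limiting coordinate $\lambda^\infty_i$; but this is precisely what is required for $\Phi(x^n,y^n,\lambda^n)$ to converge to $\Phi_{I'}(x^\infty,y^\infty,\lambda^\infty)$ in $\R^{+p}\times\R^{+*p}$, and symmetrically for the inverse. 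Composition does not raise any cross-stratum compatibility issue, since source and target of any arrow in stratum $I$ both lie in $S_I$, so composition already occurs stratum-wise on both sides.
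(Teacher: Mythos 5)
Your proof is correct, and it fills in essentially the argument the paper leaves implicit (the paper states this proposition without a proof, since the glued topology on the left-hand side is defined precisely via the charts $\Phi_I$). Your stratum-by-stratum verification of the groupoid axioms (source $x$, target $y = x\cdot a^I(x,y,\lambda)$, composition $a^I(x,y,\lambda)\,a^I(y,z,\mu) = a^I(x,z,\lambda\mu)$), together with the observation that the action of $\R^{+*p}$ preserves each stratum $S_I$ so the restricted structure on $S_I\rtimes\R^{+*p}$ is the one to compare against, is exactly what is needed; the continuity across strata is indeed tautological given the paper's definition of the topology.
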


\begin{rem}{}
    We can notice that the previous homeomorphism restricts to each member of the union : \\
    \[
        \R^{+*p}_I \times \R^{+*p}_I \times \R^{+*p}_{\{1, \dots, p\} \setminus I}
        \cong
        \R^{+*p}_I \rtimes \R^{+*p}.
    \]
    Doing so, we induce a topology on some closed subspaces of $\R^{+p} \rtimes \R^{+*p}$.
\end{rem}

Supposing the amenability of the Monthubert Groupoid, we define a filtration $\emptyset = A_{-1} \subset A_0 \subset \dots \subset A_d = C^*(\Gamma_{b, \mathfrak{f}}(X))$ of its full $C^*$-algebra by $A_i := C^*\left(\underset{k \leqslant i}{\bigsqcupsim} (\underset{g \in \mathcal{F}_k}{\sqcup} g \underset{B}{\times} g \times \R^{+*k})\right)$. \\
We notice that for $i \leqslant j$, $\frac{A_j}{A_i} \cong C^*\left(\underset{i < k \leqslant j}{\bigsqcupsim} (\underset{g \in \mathcal{F}_k}{\sqcup} g \underset{B}{\times} g \times \R^{+*k})\right)$.
\subsection{Connections computation}
We still start from a family of manifolds with embedded corners $\pi : \widetilde{X} \longrightarrow B$ of codimension $d$ and whose total space is endowed with an embedded corner structure. We still endow its Monthubert groupoid with the previous filtration $\emptyset = A_{-1} \subset A_0 \subset \dots \subset A_d = C^*(\Gamma_{b, \mathfrak{f}}(X))$. \\

Let $p \in \llbracket 1, d \rrbracket$. \\
The short exact sequence $0 \rightarrow \frac{A_{p-1}}{A_{p-2}} \rightarrow \frac{A_p}{A_{p-2}} \rightarrow \frac{A_p}{A_{p-1}} \rightarrow 0$ provides a 6 term exact sequence:

{\tiny
\[\begin{tikzcd}
	{K^1(\underset{g \in \mathcal{F}_{p-1}}{\bigsqcup} g \underset{B}{\times}g \times \R^{+*(p-1)})} & {K^1\left(\underset{g \in \mathcal{F}_{p-1}}{\bigsqcup} g \underset{B}{\times}g \times \R^{+*(p-1)} \quad \underset{\sim}{\bigsqcup} \quad\underset{f \in \mathcal{F}_p}{\bigsqcup} f \underset{B}{\times}f \times \R^{+*p}\right)} & {K^1(\underset{f \in \mathcal{F}_p}{\bigsqcup} f \underset{B}{\times}f \times \R^{+*p})} \\
	\\
	{K^0(\underset{f \in \mathcal{F}_p}{\bigsqcup} f \underset{B}{\times}f \times \R^{+*p})} & {K^0\left(\underset{g \in \mathcal{F}_{p-1}}{\bigsqcup} g \underset{B}{\times}g \times \R^{+*(p-1)} \quad \underset{\sim}{\bigsqcup} \quad\underset{f \in \mathcal{F}_p}{\bigsqcup} f \underset{B}{\times}f \times \R^{+*p}\right)} & {K^0(\underset{g \in \mathcal{F}_{p-1}}{\bigsqcup} g \underset{B}{\times}g \times \R^{+*(p-1)})}
	\arrow[from=1-1, to=1-2]
	\arrow[from=1-2, to=1-3]
	\arrow[from=1-3, to=3-3]
	\arrow[from=3-1, to=1-1]
	\arrow[from=3-2, to=3-1]
	\arrow[from=3-3, to=3-2]
\end{tikzcd}\]
}

We have $K^i(g \underset{B}{\times} g) \cong K^i(B)$ ($\xymatrix{g \underset{B}{\times} g \dar[r] & g}$ and $\xymatrix{B \dar[r] & B}$ are Morita equivalent), then using Bott periodicity, the connection maps can be written as $\delta^i : K^i(B)^{\# \mathcal{F}_p} \longrightarrow K^i(B)^{\# \mathcal{F}_{p-1}}$, $i \in \{0, 1 \}$. \\ \\
Our goal in this section will be to compute those maps coordinatewise using local arguments. Let $f \in \mathcal{F}_p(X)$. \\ \\

The restriction to any faces is natural in the following sense: \\
For any $g$ in $\mathcal{F}_{p-1}(X)$, the $g$-component of the range of $\delta^i_{\vert f} : K^i(B) \longrightarrow \underset{g' \in \mathcal{F}_{p-1}(X)}{\bigoplus} K^i(B)$ is the connection of the restricted sequence
{\scriptsize
\begin{equation}\label{diag:suite_fg}
\begin{tikzcd}
	0 & {C^*(g \underset{B}{\times}g \times \R^{+*(p-1)})} & {C^*(g \underset{B}{\times}g \times \R^{+*(p-1)} \quad \underset{\sim}{\bigsqcup} \quad f \underset{B}{\times}f \times \R^{+*p})} & {C^*(f \underset{B}{\times}f \times \R^{+*p})} & 0
	\arrow[from=1-1, to=1-2]
	\arrow[from=1-2, to=1-3]
	\arrow[from=1-3, to=1-4]
	\arrow[from=1-4, to=1-5]
\end{tikzcd}.
\end{equation}
}

The first case is the one where $f \not \subset \overline{g}$. In such a case the disjoint union involved is topologically trivial. \\
We get $C^*(g \underset{B}{\times}g \times \R^{+*(p-1)} \quad \underset{\sim}{\bigsqcup} \quad f \underset{B}{\times}f \times \R^{+*p}) = C^*(g \underset{B}{\times}g \times \R^{+*(p-1)} \quad \bigsqcup \quad f \underset{B}{\times}f \times \R^{+*p}) = C^*(g \underset{B}{\times}g \times \R^{+*(p-1)}) \bigoplus  C^*(f \underset{B}{\times}f \times \R^{+*p})$. \\ \\
Then the short exact sequence (\ref{diag:suite_fg}) actually splits, and then connections are zero maps in this case. \\

To understand what happens with the other faces, we need to look at our groupoid using a tubular neighborhood. Let $U \subset \widetilde{X}$ such a neighborhood around the face $f$ with respect to $\pi$-fibers (see rk \ref{tubNeighborhood}). Using the notation $U \cap g_I \cong f \times \R^{+*}_I$, $g_I \subset H_I$, and $\widehat{i} := \{1, \dots, p \} \setminus \{i\}$, the homeomorphism (\ref{homeo:grpdActions}) gives us a local computation: \\
\begin{align*}
    & (g_{\widehat{i}} \cap U) \underset{B}{\times} (g_{\widehat{i}} \cap U) \times \R^{+*(p-1)} \quad \underset{\sim}{\bigsqcup} \quad f \underset{B}{\times} f \times \R^{+*p} \\
    \underset{tub. neighb.}{\cong} &
    f \underset{B}{\times} f \times \R^{+*p}_{\widehat{i}} \times \R^{+*p}_{\widehat{i}} \times \R^{+*p}_{\{i\}} \quad \underset{\sim}{\bigsqcup} \quad f \underset{B}{\times} f \times \R^{+*p}_{\{1, \dots, p \}} \times \R^{+*p}_{\{1, \dots, p \}} \times \R^{+*p}_{\emptyset} \\
    \underset{\text{homeo } (\ref{homeo:grpdActions})}{\cong} &
    f \underset{B}{\times} f \times \R^{+*p}_{\widehat{i}} \rtimes \R^{+*p} \quad \bigsqcup \quad f \underset{B}{\times} f \times \R^{+*p}_{\{1, \dots, p \}} \rtimes \R^{+*p} \\
    = & f \underset{B}{\times} f \times (\R^{+*p}_{\widehat{i}} \bigsqcup \{0\}^p) \rtimes \R^{+*p} \\
    = & f \underset{B}{\times} f \times \R^{+p}_{\widehat{i}} \rtimes \R^{+*p}.
\end{align*} \\

Then we can summerize using the following diagram:
{\footnotesize
\[\text{\hspace{-6em}}\begin{tikzcd}[ampersand replacement=\&,column sep=tiny]
	0 \& {C^*(g_I \underset{B}{\times}g_I \times \R^{+*(p-1)})} \& {C^*(g_I \underset{B}{\times}g_I \times \R^{+*(p-1)} \quad \underset{\sim}{\bigsqcup} \quad f \underset{B}{\times}f \times \R^{+*p})} \& {C^*(f \underset{B}{\times}f \times \R^{+*p})} \& 0 \\
	0 \& {C^*((g_I \cap U) \underset{B}{\times}(g_I \cap U) \times \R^{+*(p-1)})} \& {C^*((g_I \cap U) \underset{B}{\times}(g_I \cap U) \times \R^{+*(p-1)} \quad \underset{\sim}{\bigsqcup} \quad f \underset{B}{\times}f \times \R^{+*p})} \& {C^*(f \underset{B}{\times}f \times \R^{+*p})} \& 0 \\
	0 \& {C^*(f \underset{B}{\times}f \times \R^{+*}_{\widehat{i}} \rtimes \R^{+*p})} \& {C^*(f \underset{B}{\times}f \times \R^+_{\widehat{i}} \rtimes \R^{+*p})} \& {C^*(f \underset{B}{\times}f \times \{0\}^p \rtimes \R^{+*p})} \& 0 \\
	0 \& {C^*(B \times \R^{+*}_{\widehat{i}} \rtimes \R^{+*p})} \& {C^*(B \times \R^+_{\widehat{i}} \rtimes \R^{+*p})} \& {C^*(B \times \{0\}^p \rtimes \R^{+*p})} \& 0, \\
	\\
	{}
	\arrow[from=1-1, to=1-2]
	\arrow[from=1-2, to=1-3]
	\arrow[from=1-3, to=1-4]
	\arrow[from=1-4, to=1-5]
	\arrow[from=2-1, to=2-2]
	\arrow["\iota", from=2-2, to=1-2]
	\arrow[from=2-2, to=2-3]
	\arrow[from=2-3, to=1-3]
	\arrow[from=2-3, to=2-4]
	\arrow["{=}"{description}, no head, from=2-4, to=1-4]
	\arrow[from=2-4, to=2-5]
	\arrow[from=3-1, to=3-2]
	\arrow["\cong"{description}, no head, from=3-2, to=2-2]
	\arrow[from=3-2, to=3-3]
	\arrow["T", from=3-2, to=4-2]
	\arrow["\cong"{description}, no head, from=3-3, to=2-3]
	\arrow[from=3-3, to=3-4]
	\arrow["T", from=3-3, to=4-3]
	\arrow["{=}"{description}, no head, from=3-4, to=2-4]
	\arrow[from=3-4, to=3-5]
	\arrow["T", from=3-4, to=4-4]
	\arrow[from=4-1, to=4-2]
	\arrow[from=4-2, to=4-3]
	\arrow[from=4-3, to=4-4]
	\arrow[from=4-4, to=4-5]
\end{tikzcd}\]
}
where the last short exact sequence morphism is obtained using integration along fibers: \\
\begin{tabular}{cccc}
	$T : $ & $C^*(f \underset{B}{\times} f)$ & $\longrightarrow$ & $C_0(B)$ \\
	& $\varphi$ & $\mapsto$ & $\Big( b \mapsto \int_{\pi^{-1}(B)}  \varphi(x, x) dx\Big)$
\end{tabular}
, which induce an isomorphism in K-theory. \\ \\
Adding the following lemma:
\begin{lem}
	The map $\iota : C^*((g_I \cap U) \underset{B}{\times} (g_I \cap U) \times \R^{+*(p-1)}) \longrightarrow C^*(g_I \underset{B}{\times} g_I \times \R^{+*(p-1)})$ induce an isomorphism in K-theory.
\end{lem}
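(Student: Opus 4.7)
My plan is to identify $\iota$ as a fibrewise-full hereditary subalgebra inclusion of a continuous field of compact operators over $B$, and hence as a Morita equivalence inducing an isomorphism in K-theory. First I would reduce to the case without the trivial factor $\R^{+*(p-1)}$: since $\R^{+*(p-1)}$ is a unit groupoid, one has $C^*(E \underset{B}{\times} E \times \R^{+*(p-1)}) \cong C^*(E \underset{B}{\times} E) \otimes C_0(\R^{+*(p-1)})$, so $\iota = \iota_0 \otimes \mathrm{id}$, where
\[
\iota_0 : C^*((g_I \cap U) \underset{B}{\times} (g_I \cap U)) \longrightarrow C^*(g_I \underset{B}{\times} g_I).
\]
By K\"unneth (or just by stability under tensoring with $C_0(\R^{+*(p-1)})$) it suffices to check that $\iota_0$ is a K-theory isomorphism.

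Next I would describe both sides concretely. For any smooth subset $E$ of $X$ that is a surjective submersive family over $B$ via $\pi$, once a Haar system along $\pi$-fibers is fixed, the C*-algebra of the fibered pair groupoid $E \underset{B}{\times} E \rightrightarrows E$ is canonically isomorphic to $\mathcal{K}_B(L^2_B(E))$, the compact operators on the Hilbert $C_0(B)$-module $L^2_B(E)$, equivalently a continuous field of compacts with fibre $\mathcal{K}(L^2(E_b))$ at $b \in B$. Under this identification, the open inclusion $g_I \cap U \hookrightarrow g_I$ induces a sub-Hilbert $C_0(B)$-module $L^2_B(g_I \cap U) \subset L^2_B(g_I)$ (extension by zero on $g_I \setminus U$), and $\iota_0$ becomes the corresponding corner compression $\mathcal{K}_B(L^2_B(g_I \cap U)) \hookrightarrow \mathcal{K}_B(L^2_B(g_I))$, i.e.\ a hereditary subalgebra inclusion.

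Finally I would check that this corner is \emph{full} at every $b \in B$. The key geometric input is that $f \subset g_I \cap U$ and the face $f$ of the family surjects submersively onto $B$, so that the fibre $(g_I \cap U)_b \supset f_b$ is non-empty and the projection $p_b$ onto $L^2((g_I \cap U)_b)$ inside $L^2((g_I)_b)$ is non-zero; since $\mathcal{K}(L^2((g_I)_b))$ is simple, $p_b$ is then automatically full. Fibrewise fullness promotes $\iota_0$ to a full hereditary subalgebra of the continuous field, hence to a Morita equivalence, which gives the desired isomorphism on K-theory. The main obstacle is precisely this geometric input: one must know that $\pi(f) = B$ (equivalently $\pi(g_I \cap U) = \pi(g_I)$) for each face $f$ appearing in the Monthubert filtration; once this structural property of the family is granted, the rest is a standard Morita / full-corner argument combined with the trivial Bott stability for the $\R^{+*(p-1)}$ factor.
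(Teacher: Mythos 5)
Your proposal is correct and rests on the same underlying fact as the paper, namely the Morita equivalence between a fibered pair groupoid $E \underset{B}{\times} E$ and its base $B$ as soon as $E \to B$ is a surjective submersion; but the two presentations are genuinely different in how they deploy this fact. The paper does not analyse the inclusion $\iota$ directly: it maps \emph{both} algebras to $C_0(B)$ by the fiberwise trace $T(\varphi)(b) = \int_{\pi^{-1}(b)} \varphi(x,x)\,dx$, checks that $T \circ \iota = T$ (trivially, since extension by zero does not change the diagonal integral), and concludes that $K_*(\iota)$ is an isomorphism because both copies of $T$ are, by the previously invoked Morita equivalence with $C_0(B)$. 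You instead treat $\iota_0$ directly as a full hereditary corner of a continuous field of compacts over $B$ and invoke Morita invariance of K-theory for full corners; this is more self-contained, as it does not require the separate assertion that $T$ induces a K-iso, but it amounts to the same Morita equivalence.

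One small but genuine imprecision in your fullness argument: you assert $f \subset g_I \cap U$, but for $g_I \neq f$ the face $f$ sits only in the closure $\overline{g_I}$ and is disjoint from $g_I$ itself, so $f_b \not\subset (g_I \cap U)_b$. The correct justification for non-emptiness of each fibre $(g_I\cap U)_b$ is the tubular-neighborhood identification $g_I \cap U \cong f \times \R^{+*p}_I$ together with $\pi$-compatibility ($\pi\circ\varphi = \pi\circ pr_1$) and surjectivity of $\pi|_f : f \to B$; then $(g_I \cap U)_b \cong f_b \times \R^{+*p}_I \neq \emptyset$, and the rest of your full-corner argument goes through unchanged.
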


\begin{proof}
	Up to suspension, it is enough to show this result for $p=1$. \\
	
	%In general from a surjective submersion between smooth manifolds $V \longrightarrow W$ with typical fiber $F$, we can associate its fibered pair groupoid $\xymatrix{V \underset{W}{\times} V \dar[r] & V}$. Then seeing functions of two variables in a $C_c(V \underset{W}{\times} V)$ as convolution compact operators provides a morphism which extends to an isomorphism $C^*(V \underset{W}{\times} V) \cong C_0(W) \otimes \mathcal{K}(L^2(F))$ \\
	
It is straightforward to check the commutation of: 
% https://q.uiver.app/#q=WzAsNCxbMCwxLCJDXiooKGdfSSBcXGNhcCBVKSBcXHVuZGVyc2V0e0J9e1xcdGltZXN9IChnX0kgXFxjYXAgVSkpIl0sWzAsMCwiQ14qKGdfSSBcXHVuZGVyc2V0e0J9e1xcdGltZXN9IGdfSSkiXSxbMSwxLCJDXzAoQikiXSxbMSwwLCJDXzAoQikiXSxbMCwxLCJcXGlvdGEiXSxbMCwyLCJUIl0sWzIsMywiSWQiLDJdLFsxLDMsIlQiXV0=
\[\begin{tikzcd}[ampersand replacement=\&]
	{C^*(g_I \underset{B}{\times} g_I)} \& {C_0(B)} \\
	{C^*((g_I \cap U) \underset{B}{\times} (g_I \cap U))} \& {C_0(B)}
	\arrow["T", from=1-1, to=1-2]
	\arrow["\iota", from=2-1, to=1-1]
	\arrow["T", from=2-1, to=2-2]
	\arrow["Id"', from=2-2, to=1-2]
\end{tikzcd}\]
Then functoriality tells us that $K_*(\iota)$ is an isomorphism (which can be identified with $Id_{K^*(B)}$).
\end{proof}

Our problem is now reduced to the computation of the connection maps of the last sequence:
% https://q.uiver.app/#q=WzAsNSxbMCwwLCIwIl0sWzEsMCwiQ14qKEIgXFx0aW1lcyBcXFJeeysqfV97XFx3aWRlaGF0e2l9fSBcXHJ0aW1lcyBcXFJeeysqcH0pIl0sWzIsMCwiQ14qKEIgXFx0aW1lcyBcXFJeK197XFx3aWRlaGF0e2l9fSBcXHJ0aW1lcyBcXFJeeysqcH0pIl0sWzMsMCwiQ14qKEIgXFx0aW1lcyBcXHswXFx9XnAgXFxydGltZXMgXFxSXnsrKnB9KSJdLFs0LDAsIjAiXSxbMCwxXSxbMSwyXSxbMiwzXSxbMyw0XV0=
\[\begin{tikzcd}
	0 & {C^*(B \times \R^{+*p}_{\widehat{i}} \rtimes \R^{+*p})} & {C^*(B \times \R^{+p}_{\widehat{i}} \rtimes \R^{+*p})} & {C^*(B \times \{0\}^p \rtimes \R^{+*p})} & 0,
	\arrow[from=1-1, to=1-2]
	\arrow[from=1-2, to=1-3]
	\arrow[from=1-3, to=1-4]
	\arrow[from=1-4, to=1-5]
\end{tikzcd}\]$i \in \llbracket 1, p \rrbracket$. We denote $\alpha_i^*$, $*=0,1$ those two connections maps. \\

Rearranging terms $(x, \lambda) \mapsto (x_k, \lambda_k)_{k=1}^p$ gives an isomorphism 
\begin{equation}\label{homeo:rearranger}
	\R^{+p} \rtimes \R^{+*p} \cong (\R^+ \rtimes \R^{+*})^p,
\end{equation}
which for every $i \in \llbracket 1, p \rrbracket$ restricts to:
\[
	\R^{+p}_{\widehat{i}} \rtimes \R^{+*p} \cong \Big(\{0\} \rtimes  \R^{+*}\Big)^{i-1} \times \Big(\R^+ \rtimes \R^{+*}\Big) \times \Big(\{0\} \rtimes \R^{+*}\Big)^{p-i}.
\]
Then every connecting map will be deduced from each other via a factor permutation.

\begin{lem}
	There are isomorphisms identifying the connection maps $\alpha_1^*$ as $Id_{K^{\diamond}(B)}$ ($*, \diamond \in \{0, 1\}$ with $\diamond = *$ or $\diamond = 1-*$ depending on the parity with $p$). \\ More precisely, there are isomorphisms making this square commutative:
	% https://q.uiver.app/#q=WzAsNCxbMCwwLCJLXyooQ14qKEIgXFx0aW1lcyBcXFJeeysqfV97XFx3aWRlaGF0ezF9fSBcXHJ0aW1lcyBcXFJeeysqcH0pKSJdLFsxLDAsIktfezEtKn0oQ14qKEIgXFx0aW1lcyBcXHswXFx9XnAgXFxydGltZXMgXFxSXnsrKnB9KSkiXSxbMCwxLCJLXntcXGRpYW1vbmR9KEIpIl0sWzEsMSwiS157XFxkaWFtb25kfShCKSJdLFswLDEsIlxcYWxwaGFfMV4qIl0sWzIsMywiSWRfe0tee1xcZGlhbW9uZH0oQil9Il0sWzAsMiwiXFxjb25nIiwxLHsic3R5bGUiOnsiaGVhZCI6eyJuYW1lIjoibm9uZSJ9fX1dLFsxLDMsIlxcY29uZyIsMSx7InN0eWxlIjp7ImhlYWQiOnsibmFtZSI6Im5vbmUifX19XV0=
\[\begin{tikzcd}
	{K^{1-*}(B \times \{0\}^p \rtimes \R^{+*p})} & {K^*(B \times \R^{+*}_{\widehat{1}} \rtimes \R^{+*p})} \\
	{K^{\diamond}(B)} & {K^{\diamond}(B)}
	\arrow["{\alpha_1^*}", from=1-1, to=1-2]
	\arrow["\cong"{description}, no head, from=1-1, to=2-1]
	\arrow["\cong"{description}, no head, from=1-2, to=2-2]
	\arrow["{Id_{K^{\diamond}(B)}}", from=2-1, to=2-2]
\end{tikzcd}\]
\end{lem}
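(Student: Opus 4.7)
The plan is to decompose the short exact sequence as an external tensor product with a ``core'' sequence, compute the connection of the core, then reassemble via the Künneth formula and Bott periodicity. Using the rearrangement isomorphism (\ref{homeo:rearranger}) together with $\R^{+p}_{\widehat{1}} = \R^+ \times \{0\}^{p-1}$, the three groupoids factor as products and the short exact sequence becomes
\[
\bigl[C_0(B) \otimes C^*(\{0\} \rtimes \R^{+*})^{\otimes (p-1)}\bigr] \otimes \mathcal{E},
\]
where $\mathcal{E}$ is the core sequence
\[
0 \to C^*(\R^{+*} \rtimes \R^{+*}) \to C^*(\R^+ \rtimes \R^{+*}) \to C^*(\{0\} \rtimes \R^{+*}) \to 0.
\]
Because the outside factor is nuclear and has torsion-free K-theory, Künneth identifies $\alpha_1^*$ with the external product of the identity with the connection $\delta$ of $\mathcal{E}$, so the problem reduces to computing $\delta$.

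Next I would treat the core. The action groupoid $\R^{+*} \rtimes \R^{+*}$ is Morita equivalent to a point, hence $C^*(\R^{+*} \rtimes \R^{+*}) \cong \mathcal{K}$ with $K_0 = \Z$, $K_1 = 0$. The group $\{0\} \rtimes \R^{+*}$ is isomorphic to $\R$ via $\log$, so Fourier gives $C^*(\{0\} \rtimes \R^{+*}) \cong C_0(\R)$ with $K_0 = 0$, $K_1 = \Z$. In logarithmic coordinates $\mathcal{E}$ becomes the Monthubert $b$-extension of the half-line,
\[
0 \to \mathcal{K} \to C^*(\R^+ \rtimes \R^{+*}) \to C_0(\R) \to 0,
\]
whose class in $\mathrm{Ext}(C_0(\R), \mathcal{K}) \cong \Z$ one can pin down explicitly: lift a unitary generator of $K_1(C_0(\R))$ to a partial isometry $L$ in the unitisation of $C^*(\R^+ \rtimes \R^{+*})$ and check that $1 - L^*L$ and $1 - LL^*$ represent $\pm[1]$ in $K_0(\mathcal{K})$. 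This identifies $\delta$ with $\pm \mathrm{Id}_{\Z}$, the sign being absorbed in the final Bott isomorphism.

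Finally I would reassemble via Bott periodicity: each factor $C^*(\{0\} \rtimes \R^{+*}) \cong C_0(\R)$ shifts K-theoretic degree by one, giving
\[
K^{1-*}(B \times \{0\}^p \rtimes \R^{+*p}) \cong K^{1-*+p}(B), \quad K^*(B \times \R^{+*p}_{\widehat{1}} \rtimes \R^{+*p}) \cong K^{*+p-1}(B).
\]
Since $1-*+p \equiv *+p-1 \pmod{2}$, both identify with a common $K^{\diamond}(B)$ with $\diamond \equiv *+p-1 \pmod{2}$, i.e.\ $\diamond = *$ when $p$ is odd and $\diamond = 1 - *$ when $p$ is even, as in the statement. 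Through these identifications $\alpha_1^* = \mathrm{Id} \otimes \delta$ becomes $\mathrm{Id}_{K^{\diamond}(B)}$ up to the sign coming from $\delta$. The main obstacle is step two, the explicit sign-and-magnitude computation of the core connection; everything else is naturality of Künneth and Bott periodicity.
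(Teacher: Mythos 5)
Your proposal follows essentially the same route as the paper: peel off the $C_0(B)$ (and suspension) factors by Künneth to reduce to the single core extension $0 \to C^*(\R^{+*}\rtimes\R^{+*}) \to C^*(\R^{+}\rtimes\R^{+*}) \to C^*(\{0\}\rtimes\R^{+*}) \to 0$, identify the outer terms with $\mathcal K$ and $C_0(\R)$, and observe that the connecting map is an automorphism of $\Z$. The only cosmetic difference is how the value of $\delta$ is pinned down: the paper simply \emph{chooses} the generator $\beta\in K_1(C_0(\R^{+*}))$ to be the preimage of the rank-one projector class, so the core connection is the identity by fiat, whereas you propose to compute $\delta$ via the index-map/partial-isometry formula for the Wiener–Hopf extension and then absorb the resulting $\pm$ sign into the Bott isomorphism; since the lemma only asserts the \emph{existence} of such identifications, both are adequate (and indeed both implicitly rely on $K_*(C^*(\R^{+}\rtimes\R^{+*}))=0$ to know that $\delta$ is an isomorphism at all).
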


\begin{proof}
	For $p=1$, the sequence we are looking for connection maps can be written as:
	% https://q.uiver.app/#q=WzAsNSxbMiwwLCJDXzAoQikgXFxvdGltZXMgQ14qKCBcXFJeeyt9IFxccnRpbWVzIFxcUl57Kyp9KSJdLFszLDAsIkNfMChCKSBcXG90aW1lcyBDXiooIFxcezBcXH0gXFxydGltZXMgXFxSXnsrKn0pIl0sWzEsMCwiQ18wKEIpIFxcb3RpbWVzIENeKihcXFJeeysqfSBcXHJ0aW1lcyBcXFJeeysqfSkiXSxbMCwwLCIwIl0sWzQsMCwiMCJdLFswLDFdLFsyLDBdLFszLDJdLFsxLDRdXQ==
\[\begin{tikzcd}
	0 & {C_0(B) \otimes C^*(\R^{+*} \rtimes \R^{+*})} & {C_0(B) \otimes C^*( \R^{+} \rtimes \R^{+*})} & {C_0(B) \otimes C^*( \{0\} \rtimes \R^{+*})} & 0,
	\arrow[from=1-1, to=1-2]
	\arrow[from=1-2, to=1-3]
	\arrow[from=1-3, to=1-4]
	\arrow[from=1-4, to=1-5]
\end{tikzcd}\]
where the maps are identity on the $C_0(B)$ components.
Using \cite[prop 4.7.6]{HR00} $C_0(B)$ being a nuclear $C^*$ algebra, the connections are of the form $\alpha_1^* = Id_{K^*(B)} \otimes \alpha$ where $\alpha$ is the connecting map of the sequence:
% https://q.uiver.app/#q=WzAsNSxbMiwwLCJDXiooIFxcUl57K30gXFxydGltZXMgXFxSXnsrKn0pIl0sWzMsMCwiQ14qKCBcXHswXFx9IFxccnRpbWVzIFxcUl57Kyp9KSJdLFsxLDAsIkNeKihcXFJeeysqfSBcXHJ0aW1lcyBcXFJeeysqfSkiXSxbMCwwLCIwIl0sWzQsMCwiMCJdLFswLDFdLFsyLDBdLFszLDJdLFsxLDRdXQ==
\[\begin{tikzcd}
	0 & {C^*(\R^{+*} \rtimes \R^{+*})} & {C^*( \R^{+} \rtimes \R^{+*})} & {C^*( \{0\} \rtimes \R^{+*})} & 0.
	\arrow[from=1-1, to=1-2]
	\arrow[from=1-2, to=1-3]
	\arrow[from=1-3, to=1-4]
	\arrow[from=1-4, to=1-5]
\end{tikzcd}\]
The groupoids $\R^{+*} \rtimes \R^{+*}$ and $\R^{+*} \times \R^{+*}$ being isomorphic, we have $C^*(\R^{+*} \rtimes \R^{+*}) \cong C^*(\R^{+*} \times \R^{+*}) \cong \calK(L^2(\R^{+*}))$ and $C^*(\{0\} \rtimes \R^{+*}) \cong C_0(\R^{+*})$ we can set:
\begin{itemize}
	\item $p \in \calK(L^2(\R^{+*}))$ a rank one projector
	\item $\beta \in K_1(C_0(\R^{+*}))$ such that  $K_1(C_0(\R^{+*})) \rightarrow K_0(\calK(L^2(\R^{+*})))$ maps $\beta$ on $[p]_0$
	\item The isomorphism $K_1(C_0(\R^{+*})) \cong \Z$ generated by $\beta \mapsto 1$
	\item The isomorphism $K_0(\calK(L^2(\R^{+*}))) \cong \Z$ generated by $[p]_0 \mapsto 1$
\end{itemize}
fiting them together in a diagram gives us:
% https://q.uiver.app/#q=WzAsNixbMSwwLCJLXzAoQ14qKFxcUl57Kyp9IFxccnRpbWVzIFxcUl57Kyp9KSkiXSxbMCwwLCJLXzEoQ14qKCBcXHswXFx9IFxccnRpbWVzIFxcUl57Kyp9KSkiXSxbMCwxLCJLXzEoQ18wKFxcUl57Kyp9KSkiXSxbMSwxLCJLXzAoXFxjYWxLKExeMihcXFJeeysqfSkpKSJdLFswLDIsIlxcWiJdLFsxLDIsIlxcWiJdLFsxLDAsIlxcYWxwaGEiXSxbMiwzLCJcXGJldGEgXFxtYXBzdG8gW3BdXzAiLDJdLFsxLDIsIlxcY29uZyIsMSx7InN0eWxlIjp7ImhlYWQiOnsibmFtZSI6Im5vbmUifX19XSxbMCwzLCJcXGNvbmciLDEseyJzdHlsZSI6eyJoZWFkIjp7Im5hbWUiOiJub25lIn19fV0sWzIsNCwiXFxjb25nIiwxLHsic3R5bGUiOnsiaGVhZCI6eyJuYW1lIjoibm9uZSJ9fX1dLFs0LDUsIiIsMSx7InN0eWxlIjp7ImJvZHkiOnsibmFtZSI6ImRhc2hlZCJ9fX1dLFszLDUsIlxcY29uZyIsMSx7InN0eWxlIjp7ImhlYWQiOnsibmFtZSI6Im5vbmUifX19XV0=
{\begin{equation}\label{eq:isobetap}
\begin{tikzcd}
	{K_1(C^*( \{0\} \rtimes \R^{+*}))} & {K_0(C^*(\R^{+*} \rtimes \R^{+*}))} \\
	{K_1(C_0(\R^{+*}))} & {K_0(\calK(L^2(\R^{+*})))} \\
	\Z & \Z,
	\arrow["\alpha", from=1-1, to=1-2]
	\arrow["\cong"{description}, no head, from=1-1, to=2-1]
	\arrow["\cong"{description}, no head, from=1-2, to=2-2]
	\arrow["{\beta \mapsto [p]_0}"', from=2-1, to=2-2]
	\arrow["\cong"{description}, no head, from=2-1, to=3-1]
	\arrow["\cong"{description}, no head, from=2-2, to=3-2]
	\arrow["{Id_{\Z}}", dashed, from=3-1, to=3-2]
\end{tikzcd}
\end{equation}}
where the induced isomorphism is $Id_{\Z}$.
Then tensoring those spaces by $K^*(B) = K_*(C_0(B))$ at left and the maps with $Id_{K^*(B)}$ and using the Künneth formulas we get the following commutative square:
% https://q.uiver.app/#q=WzAsNCxbMSwwLCJLXyooQ18wKEIpIFxcb3RpbWVzIENeKihcXFJeeysqfSBcXHJ0aW1lcyBcXFJeeysqfSkpIl0sWzAsMCwiS197MS0qfShDXzAoQikgXFxvdGltZXMgQ14qKCBcXHswXFx9IFxccnRpbWVzIFxcUl57Kyp9KSkiXSxbMCwxLCJLXiooQikiXSxbMSwxLCJLXiooQikiXSxbMSwwLCJcXGFscGhhXzFeKiJdLFsyLDMsIklkX3tLXiooQil9Il0sWzIsMSwiXFxjb25nIiwxLHsic3R5bGUiOnsiaGVhZCI6eyJuYW1lIjoibm9uZSJ9fX1dLFszLDAsIlxcY29uZyIsMSx7InN0eWxlIjp7ImhlYWQiOnsibmFtZSI6Im5vbmUifX19XV0=
\[\begin{tikzcd}
	{K_{1-*}(C_0(B) \otimes C^*( \{0\} \rtimes \R^{+*}))} & {K_*(C_0(B) \otimes C^*(\R^{+*} \rtimes \R^{+*}))} \\
	{K^*(B)} & {K^*(B).}
	\arrow["{\alpha_1^*}", from=1-1, to=1-2]
	\arrow["\cong"{description}, no head, from=2-1, to=1-1]
	\arrow["{Id_{K^*(B)}}", from=2-1, to=2-2]
	\arrow["\cong"{description}, no head, from=2-2, to=1-2]
\end{tikzcd}\]

Then using the suspension functor $p-1$ times on this square (or tensoring by $C_0(\R^{+*p})$, which is the same), we get the announced result.
\end{proof}

The connections $\alpha_1^*$ being computed through isomorphisms, we deduce the other $\alpha_i^*$ through the same isomorphisms.

\begin{lem}
	For $i \in \llbracket 2, p \rrbracket$, let $\tau_i : \R^{+p}_{\widehat{i}} \rtimes \R^{+*p} \overset{\cong}{\rightarrow} \R^{+p}_{\widehat{i-1}} \rtimes \R^{+*p}$ the groupoid isomorphism consisting in the exchange of the $i^{th}$ and the $i-1^{th}$ factor in both terms. \\
	The map $\tau_i$ induce a morphism between the connecting maps as follows:
	% https://q.uiver.app/#q=WzAsNCxbMCwwLCJLXnsxLSp9KEIgXFx0aW1lcyBcXHswXFx9XnAgXFxydGltZXMgXFxSXnsrKnB9KSJdLFsxLDAsIkteKihCIFxcdGltZXMgXFxSXnsrKnB9X3tcXHdpZGVoYXR7aX19IFxccnRpbWVzIFxcUl57KypwfSkiXSxbMCwxLCJLXnsxLSp9KEIgXFx0aW1lcyBcXHswXFx9XnAgXFxydGltZXMgXFxSXnsrKnB9KSJdLFsxLDEsIkteKihCIFxcdGltZXMgXFxSXnsrKnB9X3tcXHdpZGVoYXR7aS0xfX0gXFxydGltZXMgXFxSXnsrKnB9KSJdLFswLDEsIlxcYWxwaGFfaV4qIl0sWzAsMiwiS197MS0qfShcXHRhdV9pKSIsMl0sWzIsMywiXFxhbHBoYV97aS0xfV4qIiwyXSxbMSwzLCJLX3sqfShcXHRhdV9pKSJdXQ==
\[\begin{tikzcd}
	{K^{1-*}(B \times \{0\}^p \rtimes \R^{+*p})} & {K^*(B \times \R^{+*p}_{\widehat{i}} \rtimes \R^{+*p})} \\
	{K^{1-*}(B \times \{0\}^p \rtimes \R^{+*p})} & {K^*(B \times \R^{+*p}_{\widehat{i-1}} \rtimes \R^{+*p}).}
	\arrow["{\alpha_i^*}", from=1-1, to=1-2]
	\arrow["{K_{1-*}(\tau_i)}"', from=1-1, to=2-1]
	\arrow["{K_{*}(\tau_i)}", from=1-2, to=2-2]
	\arrow["{\alpha_{i-1}^*}"', from=2-1, to=2-2]
\end{tikzcd}\]
Using the same isomorphisms as in $(\ref{eq:isobetap})$ and the K-theory stability under tenoring by $\calK$, the vertical left (resp. the right) morphism is identified with $-Id_{K^{\diamond}(B)}$ (resp. $Id_{K^{\diamond}(B)}$). \\
Then $\alpha_i^*$ and $\alpha_{i-1}^*$ are identified with opposite morphisms from $K^{\diamond}(B)$ to $K^{\diamond}(B)$.
\end{lem}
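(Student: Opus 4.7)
The plan is to split the proof into two parts: first, obtain the commutative square from naturality of the six-term exact sequence; second, identify the two vertical arrows under the chosen isomorphisms by invoking graded commutativity of the external Künneth product.

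For the square, I would observe that $\tau_i$ is a groupoid isomorphism that preserves the stratification by which base coordinates vanish: it restricts to an isomorphism $\R^{+*p}_{\widehat i} \rtimes \R^{+*p} \cong \R^{+*p}_{\widehat{i-1}} \rtimes \R^{+*p}$ of the open dense subgroupoids and to an automorphism of the common closed subgroupoid $\{0\}^p \rtimes \R^{+*p}$ (namely the swap of the $(i-1)$-th and $i$-th coordinates in $\R^{+*p}$). Applying the full $C^*$-functor and tensoring with $C_0(B)$ yields a morphism of short exact sequences, and naturality of the associated six-term exact sequence produces the asserted commutativity of the diagram.

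Next I would use the rearrangement (\ref{homeo:rearranger}) to decompose both the quotient and the kernel as tensor products: $C^*(B \times \{0\}^p \rtimes \R^{+*p}) \cong C_0(B) \otimes C^*(\{0\} \rtimes \R^{+*})^{\otimes p}$ on the quotient side, and $C^*(B \times \R^{+*p}_{\widehat i} \rtimes \R^{+*p}) \cong C_0(B) \otimes C^*(\R^{+*} \rtimes \R^{+*})^{\otimes (p-1)} \otimes C^*(\{0\} \rtimes \R^{+*})$ on the kernel side, with the degenerate factor placed at position $i$. Under these decompositions $\tau_i$ becomes the flip of the $(i-1)$-th and $i$-th tensor factors while $C_0(B)$ is left untouched. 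Using the isomorphisms of (\ref{eq:isobetap}), each $C^*(\R^{+*} \rtimes \R^{+*}) \cong \calK$ factor contributes a degree $0$ class in K-theory (the rank-one projector $[p]_0$) and each $C^*(\{0\} \rtimes \R^{+*}) \cong C_0(\R^{+*})$ factor contributes a degree $1$ class (the Bott element $\beta$).

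The conclusion then follows from graded commutativity of the external product: flipping two tensor factors of K-theory degrees $d$ and $d'$ multiplies the corresponding Künneth class by $(-1)^{dd'}$. On the quotient both swapped factors have degree $1$, so $K_{1-*}(\tau_i)$ acts as $(-1)^{1\cdot 1}= -1$ on the generator, i.e.\ as $-\mathrm{Id}_{K^\diamond(B)}$. On the kernel one swapped factor is a $\calK$ (degree $0$) and the other a $C_0(\R^{+*})$ (degree $1$), so $K_*(\tau_i)$ acts as $(-1)^{0\cdot 1}\mathrm{Id}_{K^\diamond(B)} = \mathrm{Id}_{K^\diamond(B)}$. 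Inserting these in the commutative square gives $\alpha_{i-1}^* \circ (-\mathrm{Id}) = \mathrm{Id}\circ \alpha_i^*$, hence $\alpha_i^* = -\alpha_{i-1}^*$. The main point of care is keeping track of the K-theory degrees attached to each tensor factor through the identifications of (\ref{eq:isobetap}) and of the subsequent suspensions by $\calK$, so that the graded-commutativity sign is unambiguously applied; everything else in the argument is formal.
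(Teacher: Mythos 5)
Your proof is correct and reaches the same commutative square by the same naturality argument as the paper. Where you part ways is in identifying the two vertical maps: you invoke graded commutativity of the K-theory external product, obtaining both signs in a single stroke, namely $(-1)^{1\cdot 1}=-1$ for the two odd-degree factors swapped on the quotient side and $(-1)^{1\cdot 0}=1$ for the odd and even factors swapped on the kernel side. The paper instead treats the two sides by different arguments: stability of K-theory under tensoring by $\calK$ for the kernel side, and a hand-built Appendix computation (Proposition \ref{flipDemo}) of the flip on $K_0(C_0(\R^2))$ via the cone extension $0 \to C_0(\R^2) \to C_0(\mathbb{D}) \to C(S^1) \to 0$ for the quotient side. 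The paper's version is more self-contained (it essentially re-proves the special case of graded commutativity it needs), while yours is shorter and more uniform at the cost of citing graded commutativity as a black box. One small correction: in your decomposition of the kernel-side $C^*$-algebra the multiplicities are reversed. Since $\widehat{i} = \llbracket 1, p \rrbracket \setminus \{i\}$, the groupoid $\R^{+*p}_{\widehat i} \rtimes \R^{+*p}$ has $p-1$ factors of type $\{0\}\rtimes\R^{+*}$ (each contributing $C_0(\R^{+*})$) and a single factor $\R^{+*}\rtimes\R^{+*}$ (contributing $\calK$) sitting at position $i$, not one degenerate factor among $p-1$ copies of $\calK$. Your subsequent sign computation (one $\calK$ factor, one $C_0(\R^{+*})$ factor being exchanged) is nonetheless consistent with the correct decomposition, so the argument goes through once that line is fixed.
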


\begin{proof}
	the isomorphism (\ref{homeo:rearranger}) restricts to:
\[
	\R^{+p}_{\widehat{i}} \rtimes \R^{+*p} \cong \Big(\{0\} \rtimes  \R^{+*}\Big)^{i-2} \times \Big(\{0\} \rtimes \R^{+*}\Big) \times \Big(\R^+ \rtimes \R^{+*}\Big) \times \Big(\{0\} \rtimes \R^{+*}\Big)^{p-i}\]

\[	
	\R^{+p}_{\widehat{i-1}} \rtimes \R^{+*p} \cong \Big(\{0\} \rtimes  \R^{+*}\Big)^{i-2} \times \Big(\R^+ \rtimes \R^{+*}\Big) \times \Big(\{0\} \rtimes \R^{+*}\Big) \times \Big(\{0\} \rtimes \R^{+*}\Big)^{p-i}.
\]
then up to tensoring by $C_0(\R^{+*}) = C^*(\{0\} \rtimes \R^{+*})$ $i-2$ times at left, $p-1$ times at right, and by $K^*(B)$ at left, we can suppose that $p=2$. For this proof, connection maps with those simplifying assumptions will be denoted $\widetilde{\alpha_1}$ and $\widetilde{\alpha_2}$. \\

The isomorphism $(\ref{homeo:rearranger})$ being understood, $\tau_2$ may be written as $\tau_2 : (\{0\} \rtimes \R^{+*}) \times (\R^+ \rtimes \R^{+*}) \overset{\cong}{\rightarrow} (\R^+ \rtimes \R^{+*}) \times (\{0\} \rtimes \R^{+*})$. It induces a short exact sequence morphism whose connection naturality rise a commutative square:
% https://q.uiver.app/#q=WzAsNCxbMCwwLCJLXjAoKFxcezBcXH0gXFxydGltZXMgXFxSXnsrKn0pIFxcdGltZXMgKFxcezBcXH0gXFxydGltZXMgXFxSXnsrKn0pKSJdLFsxLDAsIkteMSgoXFx7MFxcfSBcXHJ0aW1lcyBcXFJeeysqfSkgXFx0aW1lcyAoXFxSXnsrKn0gXFxydGltZXMgXFxSXnsrKn0pKSJdLFswLDEsIkteMCgoXFx7MFxcfSBcXHJ0aW1lcyBcXFJeeysqfSkgXFx0aW1lcyAoXFx7MFxcfSBcXHJ0aW1lcyBcXFJeeysqfSkpIl0sWzEsMSwiS14xKChcXFJeeysqfSBcXHJ0aW1lcyBcXFJeeysqfSkgXFx0aW1lcyAoXFx7MFxcfSBcXHJ0aW1lcyBcXFJeeysqfSkpIl0sWzAsMSwiXFx3aWRldGlsZGV7XFxhbHBoYV8yfSJdLFsyLDMsIlxcd2lkZXRpbGRle1xcYWxwaGFfMX0iLDJdLFsxLDMsIlxcY29uZyBcXHRhdV8yIl0sWzAsMiwiXFxjb25nIFxcdGF1XzIiXV0=
\[\begin{tikzcd}
	{K^0((\{0\} \rtimes \R^{+*}) \times (\{0\} \rtimes \R^{+*}))} & {K^1((\{0\} \rtimes \R^{+*}) \times (\R^{+*} \rtimes \R^{+*}))} \\
	{K^0((\{0\} \rtimes \R^{+*}) \times (\{0\} \rtimes \R^{+*}))} & {K^1((\R^{+*} \rtimes \R^{+*}) \times (\{0\} \rtimes \R^{+*}))}
	\arrow["{\widetilde{\alpha_2}}", from=1-1, to=1-2]
	\arrow["{\cong \tau_2}", from=1-1, to=2-1]
	\arrow["{\cong \tau_2}", from=1-2, to=2-2]
	\arrow["{\widetilde{\alpha_1}}"', from=2-1, to=2-2]
\end{tikzcd}\]
Using the suspended vertical isomorphisms on the upper half of $(\ref{eq:isobetap})$, $\widetilde{\alpha_1}$ and $\widetilde{\alpha_2}$ may be identfied with the maps $Id \otimes (\beta \mapsto [p]_0)$ and $(\beta \mapsto [p]_0) \otimes Id$. Pursuing the identifications using suspensions of the $(\ref{eq:isobetap})$ lower half isomorphisms, we get:
% https://q.uiver.app/#q=WzAsOCxbMSwwLCJLXzAoQ18wKFxcUl57KyoyfSkpIl0sWzQsMCwiS18xKENfMChcXFJeeysqfSkgXFxvdGltZXMgXFxjYWxLKSJdLFsxLDEsIktfMChDXzAoXFxSXnsrKjJ9KSkiXSxbNCwxLCJLXzEoXFxjYWxLIFxcb3RpbWVzIENfMChcXFJeeysqfSkpIl0sWzAsMCwiXFxaIl0sWzAsMSwiXFxaIl0sWzUsMSwiXFxaIl0sWzUsMCwiXFxaIl0sWzAsMSwiSWQgXFxvdGltZXMgKFxcYmV0YSBcXG1hcHN0byBbcF1fMCkiXSxbMCwyLCJcXHRhdV8yIiwyXSxbMiwzLCIoXFxiZXRhIFxcbWFwc3RvIFtwXV8wKSBcXG90aW1lcyBJZCIsMl0sWzEsMywiXFx0YXVfMiJdLFswLDQsIlxcY29uZyJdLFsyLDUsIlxcY29uZyIsMl0sWzMsNiwiXFxjb25nIl0sWzEsNywiXFxjb25nIiwyXSxbNCw1LCIiLDEseyJzdHlsZSI6eyJib2R5Ijp7Im5hbWUiOiJkYXNoZWQifX19XSxbNyw2LCIiLDEseyJzdHlsZSI6eyJib2R5Ijp7Im5hbWUiOiJkYXNoZWQifX19XSxbNCw3LCJJZCIsMSx7ImN1cnZlIjotNX1dLFs1LDYsIklkIiwxLHsiY3VydmUiOjV9XV0=
\[\begin{tikzcd}
	\Z & {K_0(C_0(\R^{+*2}))} &&& {K_1(C_0(\R^{+*}) \otimes \calK)} & \Z \\
	\Z & {K_0(C_0(\R^{+*2}))} &&& {K_1(\calK \otimes C_0(\R^{+*}))} & \Z
	\arrow["Id"{description}, bend left=15, from=1-1, to=1-6]
	\arrow[dashed, from=1-1, to=2-1]
	\arrow["\cong", from=1-2, to=1-1]
	\arrow["{Id \otimes (\beta \mapsto [p]_0)}", from=1-2, to=1-5]
	\arrow["{\tau_2}"', from=1-2, to=2-2]
	\arrow["\cong"', from=1-5, to=1-6]
	\arrow["{\tau_2}", from=1-5, to=2-5]
	\arrow[dashed, from=1-6, to=2-6]
	\arrow["Id"{description}, bend right=15, from=2-1, to=2-6]
	\arrow["\cong"', from=2-2, to=2-1]
	\arrow["{(\beta \mapsto [p]_0) \otimes Id}"', from=2-2, to=2-5]
	\arrow["\cong", from=2-5, to=2-6]
\end{tikzcd}\]
where the curved arrows are identity as suspension of $Id_{\Z}$ in $(\ref{eq:isobetap})$.

By construction K-theory of $C^*$-algebras is stable under tensorisation by $\calK$, then : $K_1(C_0(\R^{+*}) \otimes \calK) = K_1(\calK \otimes C_0(\R^{+*})) = K_1(C_0(\R^{+*}))$, the induced map at right will be $Id_{\Z}$. \\
The left $\tau_2$ map being obtained as the K-theory of the flip automorphism $f(x, y) \mapsto f(y, x)$, the left induced map is $-Id_{\Z}$ (see Appendix thm \ref{flipDemo}).

Then trough the isomorphisms in $(\ref{eq:isobetap})$, $\widetilde{\alpha_1}$ and $\widetilde{\alpha_2}$ will be identified with opposite automorphisms of $\Z$.
\end{proof}

Combining those two last lemmas provides us an identification of $\alpha_i^*$ as $(-1)^{i-1} Id_{K^{\diamond}(B)}$ with $\diamond = *$ or $\diamond = 1-*$ depending on the parity of $p$.

Then the connection $\delta^*_{\vert f}$ seen as a map $\delta^*_{\vert f} : K^*(B) \longrightarrow \underset{g' \in \mathcal{F}_{p-1}(X)}{\oplus} K^*(B)$ can be written as a combination of $-Id$, $Id$ and $0$ maps. To be more fluent, we will denote
\begin{equation}\label{eq:signeRel}
\sigma(f, g) = 
\left\{
    \begin{array}{ll}
        (-1)^{k-1} & \text{ if } g = g_{\widehat{k}} \\
        0 & \text{ otherwise}
    \end{array}
\right., 
\end{equation} \text{ as }\cite[eq (5.22)]{CRL18}.
%Now to summerize the whole computation, we have a geometrical computation which then depend of the choice of a system of tubular neighborhoods: {\color{red} Existe si famille triviale, si famille elle même plongée (genre Möbius ?), où même en codim 1 (doit se développer si c'est vrai).}

\begin{prop}{}
	The connection maps of the sequence $0 \rightarrow \frac{A_{p-1}}{A_{p-2}} \rightarrow \frac{A_p}{A_{p-2}} \rightarrow \frac{A_p}{A_{p-1}} \rightarrow 0$ can be written as :
	\begin{tabular}{cccc}
		$\delta^* : $ & $\underset{f \in \mathcal{F}_{p}(X)}{\bigoplus} K^*(B)$ & $\longrightarrow$ & $\underset{g \in \mathcal{F}_{p-1}(X)}{\bigoplus} K^*(B)$ \\
		& $b_f$ & $\mapsto$ & $\underset{g \in \mathcal{F}_{p-1}(X)}{\Sigma} \sigma(f, g)b_f.$
	\end{tabular}
\end{prop}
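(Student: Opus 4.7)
The plan is to assemble the previous lemmas into a coordinatewise formula for $\delta^*$. Both source and target are already presented as direct sums indexed by $\mathcal{F}_p(X)$ and $\mathcal{F}_{p-1}(X)$, so it suffices to compute, for each ordered pair $(f, g) \in \mathcal{F}_p(X) \times \mathcal{F}_{p-1}(X)$, the matrix coefficient $\delta^*_{g, f} : K^*(B) \to K^*(B)$. By naturality of the connecting map applied to the restriction of the ambient short exact sequence to the single pair of faces $\{f, g\}$, this coefficient coincides with the connection of $(\ref{diag:suite_fg})$.

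When $f \not\subset \overline{g}$ the two pieces involved have disjoint closures, so the disjoint union in $(\ref{diag:suite_fg})$ is honest, the middle term splits as a direct sum of the outer two, and the sequence splits. Its connection therefore vanishes, matching $\sigma(f, g) = 0$ in $(\ref{eq:signeRel})$.

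When $f \subset \overline{g}$, I fix a tubular neighborhood $U$ of $f$ with respect to $\pi$, provided by remark \ref{tubNeighborhood}; in the associated chart $g \cap U$ corresponds to $g_{\widehat{k}}$ for a unique $k \in \llbracket 1, p \rrbracket$. The large commutative diagram already drawn in this section will then identify the connection of $(\ref{diag:suite_fg})$ with the connection $\alpha_k^*$ of its bottom row, through three ingredients all established above: the K-theory isomorphism $\iota$ relating the global groupoid to its localization on $U$, the groupoid homeomorphism $(\ref{homeo:grpdActions})$, and the K-theory isomorphism induced by the fiber-integration map $T$. The two intermediate lemmas then combine to give $\alpha_k^* = (-1)^{k-1} Id_{K^\diamond(B)}$, which by $(\ref{eq:signeRel})$ is exactly $\sigma(f, g) \cdot Id_{K^\diamond(B)}$.

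The one point that requires caution is that the intermediate lemmas are formulated inside a single tubular neighborhood around $f$; however, since restriction-naturality determines each matrix coefficient of $\delta^*$ independently, no compatibility between different tubular neighborhoods is needed, and the proof reduces to this pair-by-pair assembly of the preceding lemmas. The main obstacle, if any, is merely bookkeeping: keeping track of the parity shift $\diamond$ versus $*$ introduced by Bott periodicity in the identifications $K^i(g \underset{B}{\times} g \times \R^{+*(p-1)}) \cong K^{\diamond}(B)$, which was already absorbed into the statements of the two computational lemmas.
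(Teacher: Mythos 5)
Your proposal is correct and follows essentially the same route as the paper: it computes the connection facewise by naturality, dispatches the disjoint case via the splitting of $(\ref{diag:suite_fg})$, and reduces the adjacent case to the computation of $\alpha_k^*$ through the tubular neighborhood diagram, $\iota$, the homeomorphism $(\ref{homeo:grpdActions})$, and the fiber-integration map $T$, then applies the two preceding lemmas to obtain $(-1)^{k-1}\mathrm{Id}_{K^\diamond(B)}$. This is exactly how the paper assembles the proposition from the earlier material.
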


\begin{rem}{\label{rq:connectionSurj}}
	When $X$ is connected, in the case $p=1$ we have : $\forall f \in \mathcal{F}_1(X), \sigma(f, \overset{\circ}{X}) = 1$. Then the last map becomes
	\begin{tabular}{ccc}
		$\underset{f \in \mathcal{F}_1(X)}{\bigoplus} K^*(B)$ & $\longrightarrow$ & $K^*(B)$ \\
		$b_f$ & $\mapsto$ & $b_f$.
	\end{tabular}
	In particular it tells us that the connection map $K^*(A_1/A_0) \rightarrow K^{1-*}(A_0)$ is surjective.
\end{rem}
\subsection{Pairing $K_*-H_*^{pcn}$}
Following the J.M. Lescure and P.Carrillo Rouse ideas, we would like to identify an homological behaviour allowing more computations starting from the connections previously computed \, $K_*(A_p/A_{p-1}) \longrightarrow K_{1-*}(A_{p-1}/A_{p-2})$. As in their case, the key belongs to conormal homology, but here we need to transport a K-theoretical data, not only integers. \\
In this section, we quickly define the conormal homology with coefficients, and because not every property J.M. Lescure and P.Carrillo Rouse use still hold in our case, we summerize those we will need. \\

For the whole section, we set $G$ to be an abelian group, and $X$ is a manifold with embedded corners of codimension $d$ (which can be the total space of a family, or not). 
\begin{defi}
	We set the following chain complex: \\
    $\xymatrix{\dots \ar[r] & C_{p+1}(X ; G) \ar[r]^{\partial_{p+1}} & C_{p}(X ; G) \ar[r]^{\partial_p} & C_{p-1}(X ; G) \ar[r] & \dots }$
    , \\
     with $C_p(X ; G) := C_p(X) \underset{\Z}{\otimes} G$, $C^p(X)$ being the $p-$chain of P. Carrillo Rouse and J.M Lescure \cite[eq (4.3)]{CRL18}. \\
     We define the complex differentials the same way tensoring with $G$ to get coefficients: 
     $\partial \Big( \gamma f \otimes \varepsilon \Big) := \underset{\underset{f \subset \overline{g}}{g \in \mathcal{F}_{p-1}(X)}}{\sum}	\gamma g \otimes (e_{i(g.f)} \lrcorner \varepsilon), \gamma \in G, $.
    
    The homology groups will be denoted $H_p^{cn}(X ; G)$. \\ And we define the periodised homology: $H_i^{pcn}(X ; G) := \bigoplus_{p \in \N} H^{cn}_{2p+i}(X ; G)$, $i=0,1$.
\end{defi}

As already mentioned in \cite[eq (4.9)]{CRL18}, there is a a natural filtration $\overset{\circ}{X} = X_0 \subset X_1 \subset \dots \subset X_d = X$, with $X_i = \{ x \in X : codim(x) \leqslant i \}$. Once again this allows us to define relative complexes:
\[
	\xymatrix{0 \ar[r] & C_{\bullet}(X_m, X_q ; G) \ar[r] & C_{\bullet}(X_l, X_q ; G) \ar[r] & C_{\bullet}(X_l, X_m ; G) \ar[r] & 0}, \, \, 
	-1 \leqslant q \leqslant m \leqslant l \leqslant d,
\]
using the convention $X_{-1} = \emptyset$.
This inducing an homology long exact sequence. Putting together the even terms (in $H_0^{pcn}$), and the odd terms (in $H_1^{pcn}$), we get a 6 term exact sequence:
\[\begin{tikzcd}
	{H_1^{pcn}(X_m, X_q ; G)} & {H_1^{pcn}(X_l, X_q ; G)} & {H_1^{pcn}(X_l, X_m ; G)} \\
	{H_0^{pcn}(X_l, X_m ; G)} & {H_0^{pcn}(X_l, X_q ; G)} & {H_0^{pcn}(X_m, X_q ; G)}
	\arrow[from=1-1, to=1-2]
	\arrow[from=1-2, to=1-3]
	\arrow["{\partial_1}", from=1-3, to=2-3]
	\arrow["{\partial_0}", from=2-1, to=1-1]
	\arrow[from=2-2, to=2-1]
	\arrow[from=2-3, to=2-2].
\end{tikzcd}\]

In suitable conditions, this sequence is actually easier to compute :
\begin{prop}
	If $X$ is connected and $\partial X \neq \emptyset$, the short exact sequence induced by, \\
	$\xymatrix{0 \ar[r] & C_{\bullet}(X_0 ; G) \ar[r] & C_{\bullet}(X ; G) \ar[r] & C_{\bullet}(X, X_0 ; G) \ar[r] & 0}$ is such that:
\[
	\xymatrix{0 \ar[r]^0 & H_1^{pcn}(X ; G) \ar@{^{(}->}[r] & H_1^{pcn}(X, X_0 ; G) \ar@{->>}[d]^{\partial_1} \\
        H_0^{pcn}(X, X_0 ; G) \ar@{->>}[u]^0 & H_0^{pcn}(X ; G) \ar[l]_{\cong} & H_0^{pcn}(X_0 ; G) \ar[l]_0}
\]
\end{prop}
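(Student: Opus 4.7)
The plan is to reduce everything to two elementary facts about $X_0 = \overset{\circ}{X}$, after which the remaining statements in the diagram follow formally from the $6$-term exact sequence recalled above.

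First, I would compute $H_*^{pcn}(X_0; G)$ by hand. Since $X_0$ contains only codimension $0$ strata, $C_p(X_0; G) = 0$ for every $p \geq 1$. Because $X$ is connected, so is $\overset{\circ}{X}$ (the boundary is nowhere locally separating for a manifold with corners), and thus $\mathcal{F}_0(X) = \{\overset{\circ}{X}\}$ and $C_0(X_0; G) = G$. Therefore $H_0^{cn}(X_0; G) = G$, $H_p^{cn}(X_0; G) = 0$ for $p \geq 1$, and after periodization $H_0^{pcn}(X_0; G) \cong G$, $H_1^{pcn}(X_0; G) = 0$. This vanishing immediately produces three of the indicated features of the diagram: the map $H_1^{pcn}(X_0; G) \to H_1^{pcn}(X; G)$ has zero source, the connecting map $\partial_0 : H_0^{pcn}(X, X_0; G) \to H_1^{pcn}(X_0; G)$ has zero target, and consequently $H_1^{pcn}(X; G) \hookrightarrow H_1^{pcn}(X, X_0; G)$ is injective by exactness.

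The key remaining claim is that $G = H_0^{pcn}(X_0; G) \to H_0^{pcn}(X; G)$ is the zero map, and this is where $\partial X \neq \emptyset$ enters the argument. I would pick any $f_0 \in \mathcal{F}_1(X)$, let $i_0$ denote the (unique) hypersurface index defining $f_0$, and observe that for every $g \in G$ the chain $g \cdot f_0 \otimes e_{i_0} \in C_1(X; G)$ satisfies
\[
    \partial\bigl(g \cdot f_0 \otimes e_{i_0}\bigr) \;=\; g \cdot \overset{\circ}{X} \otimes (e_{i_0} \lrcorner e_{i_0}) \;=\; g \cdot \overset{\circ}{X},
\]
using that, by connectedness of $X$, $\overset{\circ}{X}$ is the unique element of $\mathcal{F}_0(X)$ containing $f_0$ in its closure. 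Hence every generator of $C_0(X_0; G)$, viewed inside $C_0(X; G)$, is a boundary, so the induced map on $H_0^{cn}$ (and therefore on $H_0^{pcn}$) is the zero map.

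The last two pieces of the diagram follow purely by diagram chase: exactness at $H_0^{pcn}(X_0; G)$, combined with the vanishing just established, forces $\partial_1$ to be surjective; and exactness at $H_0^{pcn}(X; G)$, combined with $\partial_0 = 0$, identifies $H_0^{pcn}(X; G) \to H_0^{pcn}(X, X_0; G)$ as an isomorphism. The only non-formal step in the whole argument is the boundary computation above, which is mild; the role of the hypothesis $\partial X \neq \emptyset$ is precisely to supply one codimension $1$ face whose formal boundary realizes the fundamental class of the interior.
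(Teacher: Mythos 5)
Your proof is correct and follows essentially the same approach as the paper's: the vanishing of $H_1^{pcn}(X_0;G)$ handles the top-left portion of the diagram formally, and the remaining content reduces to a boundary computation in low degree that uses $\partial X \neq \emptyset$ and connectedness. You have merely made explicit the step the paper labels "a straightforward computation," namely that $\partial(f_0 \otimes e_{i_0}) = \overset{\circ}{X}$ realizes the generator of $H_0^{cn}(X_0;G)$ as a boundary.
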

\begin{proof}
	First we have $H_1^{pcn}(X_0 ; G) = 0$. \\
	Then having $H_0^{pcn}(X_0 ; G) = H_0^{cn}(X_0 ; G)$, a straightforward computation gives us the surjectivity of $\partial_1 : H_1^{pcn}(X, X_0 ; G) \rightarrow H_0^{cn}(X_0 ; G)$ (because $X$ is connected with non empty boundary). \\
	
	Moreover noticing that $\forall k \geqslant 1, H_k^{cn}(X_0 ; G) = 0$, the long exact sequence tells us that $\forall k \geqslant 2, H_k^{cn}(X ; G) \cong H_k^{cn}(X, X_0 ; G)$. For $k=0$, this also holds because of $\partial_1$ surjectivity and the vanishing of $H_1^{cn}(X_0 ; G)$. \\
	
	The nullity of the remaining maps are deduced.
\end{proof}

In particular , when $X$ is connected and $\partial X \neq \emptyset$ we have the short exact sequence:
\[
	\xymatrix{0 \ar[r] & H_1^{pcn}(X ; G) \ar[r] & H_1^{pcn}(X, X_0 ; G) \ar[r] & H_0^{pcn}(X_0 ; G) \ar[r] & 0}.
\]

Another conormal homology 6 term exact sequence will be helpful later:
\begin{thm}\label{diag:codim2Homology}
	If $X$ is an embedded corner manifold of codimension $d=2$, then the short exact sequence
\[
	\xymatrix{0 \ar[r] & C_{\bullet}(X_1, X_0 ; G) \ar[r] & C_{\bullet}(X, X_0 ; G) \ar[r] & C_{\bullet}(X, X_1 ; G) \ar[r] & 0},	
\]
induces a 6 term exact sequence:
\[\begin{tikzcd}
	{H_1^{pcn}(X_1, X_0 ; G)} & {H_1^{pcn}(X, X_0 ; G)} & 0 \\
	{H_0^{pcn}(X, X_1 ; G)} & {H_0^{pcn}(X, X_0 ; G)} & 0
	\arrow[from=1-1, to=1-2]
	\arrow[from=1-2, to=1-3]
	\arrow[from=1-3, to=2-3]
	\arrow[from=2-1, to=1-1]
	\arrow[from=2-2, to=2-1]
	\arrow[from=2-3, to=2-2]
\end{tikzcd}\]
\end{thm}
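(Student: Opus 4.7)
The plan is to derive the hexagonal exact sequence from the standard long exact sequence in homology attached to the given short exact sequence of chain complexes, and then to annihilate two of its six vertices by direct inspection of the relative complexes, using the codimension hypothesis $d = 2$.

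First I would apply the zig-zag lemma to the short exact sequence $0 \to C_\bullet(X_1, X_0 ; G) \to C_\bullet(X, X_0 ; G) \to C_\bullet(X, X_1 ; G) \to 0$ to produce a long exact sequence
\[
\cdots \to H_p^{cn}(X_1, X_0 ; G) \to H_p^{cn}(X, X_0 ; G) \to H_p^{cn}(X, X_1 ; G) \overset{\partial}{\to} H_{p-1}^{cn}(X_1, X_0 ; G) \to \cdots
\]
Summing over $p$ separately for even and odd indices — which is exactly the definition of $H_0^{pcn}$ and $H_1^{pcn}$ — folds this long sequence into a cyclic 6-term sequence of the expected shape. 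It then remains only to locate the two zeros.

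The key observation is that both relative complexes are concentrated in a single degree. For $C_\bullet(X, X_1 ; G)$, the codimension hypothesis $d = 2$ forces $C_p(X ; G) = 0$ for $p \geq 3$, while the inclusion $X_1 \subset X$ contains all faces of codimension $\leq 1$, so $C_p(X_1 ; G) = C_p(X ; G)$ for $p \leq 1$ and the quotient $C_p(X, X_1 ; G)$ vanishes there as well; the complex is thus concentrated in degree $2$. Consequently $H_{2k+1}^{cn}(X, X_1 ; G) = 0$ for every $k$, i.e.\ $H_1^{pcn}(X, X_1 ; G) = 0$. Symmetrically, since $X_1$ has no face of codimension $\geq 2$ and $X_0$ already contains the unique face of codimension $0$, the complex $C_\bullet(X_1, X_0 ; G)$ is concentrated in degree $1$, whence $H_{2k}^{cn}(X_1, X_0 ; G) = 0$ for every $k$ and $H_0^{pcn}(X_1, X_0 ; G) = 0$.

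Plugging these two vanishings into the 6-term sequence above reproduces exactly the diagram in the statement. No part of the argument is delicate: the whole content is the zig-zag construction plus a degree-by-degree bookkeeping forced by $d=2$, the real point of the theorem being to isolate this computation for use in the subsequent diagram chases of Section 4.
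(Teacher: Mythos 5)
Your proposal is correct and follows the same route as the paper: fold the long exact homology sequence into the six-term $H^{pcn}_*$ cycle and kill the two vertices $H_1^{pcn}(X,X_1;G)$ and $H_0^{pcn}(X_1,X_0;G)$ by noting that the relative complexes $C_\bullet(X,X_1;G)$ and $C_\bullet(X_1,X_0;G)$ are concentrated in degrees $2$ and $1$ respectively. The paper merely states these vanishings as ``a straightforward computation,'' which is precisely the degree-by-degree bookkeeping you spell out.
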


\begin{proof}
	The vanishing of $H_1^{pcn}(X, X_1 ; G)$ and $H_0^{pcn}(X_1, X_0 ; G)$ is a straightforward computation.
\end{proof}

Now we have more computational data with this homology, in particular in low codimension, we still have to relate it to K-theory.

\begin{rem}
	Both our computation (see (\ref{eq:signeRel})) and the conormal homology have a notion of \textit{relative sign} for two neighboring faces, because tuples are sorted they both coincide, we have: $\sigma(f, g) = (-1)^{i_{(g . f)}}$.
\end{rem}
Using this last fact, we can introduce the pairing :
\begin{thm}{$K^*-H^{pcn}$ pairing}{\label{thm:pairing}} \\
	For $k \in \{p-1, p\}$, the maps
	\begin{tabular}{ccc}
		$K^*(B)^{\# \mathcal{F}_k(X)}$ & $\longrightarrow$ & $C_k(X ; K^*(B))$ \\
		$b_f$ & $\mapsto$ & $b_f f \otimes e_{I_f}$,
	\end{tabular}
	where $\vert I \vert = p, f \subset H_{I_f}, * = 0, 1$, \\
	makes the diagram:
	\[\begin{tikzcd}
	{K^*(B)^{\# \mathcal{F}_p(X)}} & {K^*(B)^{\# \mathcal{F}_{p-1}(X)}} \\
	{C_p(X ; K^*(B))} & {C_{p-1}(X ; K^*(B))}
	\arrow["{\delta^*}", from=1-1, to=1-2]
	\arrow["\cong"{description}, no head, from=2-1, to=1-1]
	\arrow["\partial", from=2-1, to=2-2]
	\arrow["\cong"{description}, no head, from=2-2, to=1-2]
\end{tikzcd}\]
	commutative. \\
	In particular, $H_i^{pcn}(X_p, X_{p-1} ; G)$ being isomorphic to $C_p(X ; G)$ when $p$ and $i$ have the same parity, we have that:
	\begin{itemize}
        \item If $i \equiv p \mod 2$, 
        $\xymatrix{K_i(A_p / A_{p-1}) \ar[r] & K_{1-i}(A_{p-1}/A_{p-2}) \\
        H_i^{pcn}(X_p, X_{p-1} ; K^0(B)) \ar[u]^{\cong} \ar[r] & H_{1-i}^{pcn}(X_{p-1}, X_{p-2} ; K^0(B)) \ar[u]_{\cong}}$
        \item If $i \not \equiv p \mod 2$, 
        $\xymatrix{K_i(A_p / A_{p-1}) \ar[r] & K_{1-i}(A_{p-1}/A_{p-2}) \\
        H_{1-i}^{pcn}(X_p, X_{p-1} ; K^1(B)) \ar[u]^{\cong} \ar[r] & H_i^{pcn}(X_{p-1}, X_{p-2} ; K^1(B)) \ar[u]_{\cong}}$
    \end{itemize}
\end{thm}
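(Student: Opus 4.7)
The plan is to reduce the commutativity claim to an elementary face-by-face comparison between the explicit formula for $\delta^*$ established in the preceding proposition and the conormal boundary $\partial$, and then to read off the four-entry homology identifications from that one square.

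First I would check that the proposed vertical arrows $K^*(B)^{\# \mathcal{F}_k(X)} \to C_k(X; K^*(B))$ are isomorphisms. This is essentially by construction: $C_k(X)$ is the free abelian group on the codimension $k$ faces, each tagged with its canonical conormal generator $e_{I_f}$, so the map $b_f \mapsto b_f f \otimes e_{I_f}$ is just an explicit trivialization of $C_k(X; K^*(B)) \cong C_k(X) \otimes_{\Z} K^*(B)$. The same identification handles the right-hand column.

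Next I would fix $f \in \mathcal{F}_p(X)$ and $b \in K^*(B)$ in the $f$-summand, push it down to $bf \otimes e_{I_f}$, and apply the conormal differential:
\[
\partial(bf \otimes e_{I_f}) = \sum_{g \in \mathcal{F}_{p-1}(X),\, f \subset \overline{g}} bg \otimes (e_{i_{(g.f)}} \lrcorner e_{I_f}).
\]
The interior product $e_{i_{(g.f)}} \lrcorner e_{I_f}$ returns $(-1)^{i_{(g.f)}-1} e_{I_g}$, and by the remark placed just before the statement this sign is exactly $\sigma(f,g)$ from (\ref{eq:signeRel}); when $g$ is not of the form $g_{\widehat{k}}$ around $f$, both the interior product and $\sigma(f,g)$ vanish. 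The previous proposition shows that $\delta^*$ sends the same input to $\sum_g \sigma(f,g) b$, indexed over the same set of neighbouring faces. After the right-hand vertical identification sends each summand back to its $g$-coordinate, the two routes agree term by term, establishing commutativity of the square.

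Finally, for the two bulleted identifications I would compute $H_i^{pcn}(X_p, X_{p-1}; G)$ directly: the relative complex is concentrated in degree $p$, with value $C_p(X; G)$, so exactly one parity of $i$ contributes non-trivially and matches the already computed $K^*(B)^{\# \mathcal{F}_p(X)}$ after Bott periodicity. The parity constraint dictates whether the coefficient group is $K^0(B)$ or $K^1(B)$, which is precisely the case split in the two diagrams. The main obstacle I anticipate is a bookkeeping one: I would want to verify carefully that the sign $(-1)^{k-1}$ produced by the connection $\alpha_k^*$ in the previous lemma really matches the interior-product sign coming from the ordering convention on $I_f$, and that the successive Bott isomorphisms used to land in $K^*(B)^{\# \mathcal{F}_p(X)}$ and $K^*(B)^{\# \mathcal{F}_{p-1}(X)}$ are applied compatibly, so that no stray sign or parity shift appears between codimensions $p$ and $p-1$. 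Once this sign audit is done, the commutativity of the square and the identifications at the end both follow immediately.
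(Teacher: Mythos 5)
Your proposal matches the paper's implicit argument: the paper gives no separate proof of this theorem, treating it as an immediate consequence of the preceding proposition computing $\delta^*$ coordinatewise via $\sigma(f,g)$ and the remark identifying $\sigma(f,g)$ with the interior-product sign $(-1)^{i_{(g.f)}}$, and your facewise comparison together with the observation that the relative complex $C_\bullet(X_p,X_{p-1};G)$ is concentrated in degree $p$ is exactly that argument spelled out. Your flagged caution about reconciling the $(-1)^{k-1}$ from $\alpha_k^*$ with the ordering convention behind $e_{i_{(g.f)}} \lrcorner e_{I_f}$ is apt; that sign audit is precisely what the paper defers to its ``tuples are sorted'' remark.
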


%4.
\section{Obstruction computation for embedded families in low codimension}
Before proceeding to computations, we give here notations related to indexes we will use.

All our index maps will be realised using tangent groupoids. \\
If $X$ denotes a family of manifolds with embedded corners over a smooth base $B$, globally embedded, with $d := codim(X)$ we denote its associated index map: \\
\[
	Ind^X := \Big(K^0(\Gamma^{tan}_{b, \mathfrak{f}}(X)) \overset{e_1}{\longrightarrow} K^0(\Gamma_{b, \mathfrak{f}}(X))\Big).
\]
The associated $l$ times suspended index map is denoted:
\[
	Ind^X_l := \Big(K^0(\Gamma^{tan}_{b, \mathfrak{f}}(X) \times \R^{+*l}) \overset{e_1}{\longrightarrow} K^0(\Gamma_{b, \mathfrak{f}}(X) \times \R^{+*l})\Big).
\] \\
In particular when $X$ is smooth, $Ind^X$ is the classical family index computed in Atiyah Singer theorem for families (e.g see \cite{CR23}, p.14).

Using the filtration $\Gamma_{b, \mathfrak{f}}(X) = \underset{k=1}{\overset{d}{\cup}} \Gamma_{b, \mathfrak{f}}(X)_{\vert \mathcal{F}_{\leqslant k}}$ with $\Gamma_{b, \mathfrak{f}}(X)_{\vert \mathcal{F}_{\leqslant k}} := \underset{i=0}{\overset{k}{\bigsqcupsim}} \underset{f \in \mathcal{F}_i(X)}{\sqcup}(f \underset{B}{\times} f \times \R^{+*i})$, we provide another one on the tangent groupoid: $\Gamma^{tan}_{b, \mathfrak{f}}(X) = \Gamma^{tan}_{b, \mathfrak{f}}(X)_{\vert X \times [0, 1[} \sqcup \Gamma_{b, \mathfrak{f}}(X)$ that we denote: \\
\[
	\Gamma^{fred(k)}_{b, \mathfrak{f}}(X) := \Gamma^{tan}_{b, \mathfrak{f}}(X)_{\vert X \times [0, 1[} \sqcup \Gamma_{b, \mathfrak{f}}(X)_{\vert \mathcal{F}_{\leqslant k}}.
\] 
Elements of $C^*(\Gamma^{fred(k)}_{b, \mathfrak{f}}(X))$ will be called \textit{non commutative symbols}. In practice, $\Gamma^{fred(0)}_{b, \mathfrak{f}}(X)$ will be simply denoted $\Gamma^{fred}_{b, \mathfrak{f}}(X)$, and we notice that $\Gamma^{fred(d)}_{b, \mathfrak{f}}(X) = \Gamma^{tan}_{b, \mathfrak{f}}(X)$. \\
From $\Gamma^{fred(k)}_{b, \mathfrak{f}}(X)$, evaluation in $X \times \{1\}$ gives a non commutative index that we denote:
\[
	Ind^X_{nc(k)} := \Big( K^0(\Gamma^{fred(k)}_{b, \mathfrak{f}}(X)) \overset{e_1}{\rightarrow} K^0(\Gamma_{b, \mathfrak{f}}(X)_{\vert \mathcal{F}_{\leqslant k}}) = K_0(A_k) \Big)
\] 
Here again, when $k=0$ the index map will be simply denoted $Ind^X_{nc}$. \label{indiceNc}. As seen in remark \ref{grpdFred}, this map could carry Fredholm (resp. family) indices in the case of a manifold with boundary (resp. family with boundary). \\

Those non commutative indexes can be suspended as well, the $l^{th}$ suspended index map will be denoted:
\[
	Ind^X_{l, nc(k)} := \Big( K^0(\Gamma^{fred(k)}_{b, \mathfrak{f}}(X) \times \R^{+*l}) \overset{e_1}{\rightarrow} K^0(\Gamma_{b, \mathfrak{f}}(X)_{\vert \mathcal{F}_{\leqslant k}} \times \R^{+*l}) = K_0(A_k \otimes C_0(\R^{+*l})) \Big).
\] \\ \\

Following remark \ref{rk:symbVersTgt}, for the whole section, we will consider a symbol $\sigma_T \in  C^*(\Gamma^{tan}_{b, \mathfrak{f}}(X))$ associated to an elliptic operator $T$ such that $Ind_{\partial}([\sigma(T)]_1) = r_{\partial} \circ Ind^X([\sigma_T]_0)$.
\subsection{Boundary index for codimension 1 families globally embedded}

For that section $X \rightarrow B$ is a codimension 1 family of manifold with embedded corners ($X$ itself endowed with embedded corners), and $X$ is connected. \\
The canonical filtration reduced as $\emptyset = A_{-1} \subset A_0 \subset A_1 = C^*(\Gamma_{b, \mathfrak{f}}(X)$.

Using remark \ref{rq:connectionSurj}, the connections map in K-theory of $\xymatrix{0 \ar[r] & A_0 \ar[r] & A_1 \ar[r] & A_1 / A_0 \ar[r] & 0}$ are surjective. Then its 6 term exact sequence gives zero maps and injections providing the two following short exact sequences : \\
$\xymatrix{0 \ar[r] & K_*(A_1) \ar[r] & K_*(A_1 / A_0) \ar[r] & K_{1-*}(A_0) \ar[r] & 0}$, $*=0,1$. \\
In particular, with $*=0$, we see that $r_{\partial}$ is injective. \\
We compare it to conormal homology. \\ \\
We use the $K_*-H_*^{pcn}$ pairing (theorem \ref{thm:pairing}) with $H_1^{pcn}(X, X_0 ; K^{1-*}(B)) \overset{\partial}{\rightarrow} H_0^{pcn}(X_0 ; K^{1-*}(B))$ (putting this together with $Ker \, \partial = H_1^{cn}(X ; K^{1-*}(B)) = H_1^{pcn}(X ; K^{1-*}(B)$) to get a short exact sequence. \\ \\

We do get a conormal homology short exact sequence whose middle and right terms pair with the K-theory ones:
\[
\xymatrix{0 \ar[r] & K_*(A_1) \ar[r]^{r_{\partial}} & K_*(A_1 / A_0) \ar[r] & K_{1-*}(A_0) \ar[r] & 0 \\
0 \ar[r] & H_1^{pcn}(X ; K^{1-*}(B)) \ar[r] & H_1^{pcn}(X, X_0 ; K^{1-*}(B)) \ar[r] \ar[u]^{\cong} & H_0^{pcn}(X_0 ; K^{1-*}(B)) \ar[r] \ar[u]^{\cong} & 0 }
\]

In such a situation, we can use the 5's lemma to fill the diagram at left. The map will be automatically an isomorphism (actually it will be the restriction of the middle isomorphism). \\
At the end, everything is computable here:

\begin{thm}{}
    For $X$ a codimension 1 family of manifolds with embedded corners, we have:
    \begin{itemize}
        \item $K_*(A_0) \cong H_0^{pcn}(X_0 ; K^*(B)) \cong K^*(A_0)^{\# \mathcal{F}_0(X)}$
        \item $K_*(A_1/A_0) \cong H_1^{pcn}(X, X_0 ; K^{1-*}(B)) \cong K^{1-*}(B)^{\# \mathcal{F}_1(X)}$
        \item $K_*(A_1) \cong H_1^{pcn}(X ; K^{1-*}(B)) \cong K^{1-*}(B)^{\# \mathcal{F}_1(X) - 1}$. \\
    \end{itemize}
\end{thm}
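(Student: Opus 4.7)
The plan is to assemble the three bullets from ingredients already set up: the $K_*$--$H^{pcn}_*$ pairing (Theorem \ref{thm:pairing}), the surjectivity of the connection from Remark \ref{rq:connectionSurj}, and the short exact sequence in conormal homology proved above for connected $X$ with nonempty boundary. The isomorphisms for $A_0$ and $A_1/A_0$ will be direct; the one for $A_1$ will follow from a Five Lemma comparison.

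I would first compute $K_*(A_0)$ from the definition. Since $A_0 = C^*(\overset{\circ}{X} \underset{B}{\times} \overset{\circ}{X})$ is the $C^*$-algebra of the fibered pair groupoid, it is Morita equivalent to $C_0(B)$, so $K_*(A_0) \cong K^*(B)$. With $X$ connected we have $\#\mathcal{F}_0(X) = 1$, and in codimension $d = 1$ the group $H_0^{pcn}(X_0; K^*(B))$ reduces to $C_0(X_0; K^*(B)) = K^*(B)$, so the three expressions agree. For the middle bullet, Theorem \ref{thm:pairing} applied at $p = 1$ gives $K_*(A_1/A_0) \cong H_1^{pcn}(X, X_0; K^{1-*}(B))$, and because $X$ is of codimension $1$ the relative complex $C_\bullet(X, X_0; K^{1-*}(B))$ is concentrated in degree $1$ and equal to $K^{1-*}(B)^{\#\mathcal{F}_1(X)}$.

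For the last bullet, I would assemble the comparison diagram
\[
\xymatrix{
0 \ar[r] & K_*(A_1) \ar[r] & K_*(A_1/A_0) \ar[r] \ar[d]^{\cong} & K_{1-*}(A_0) \ar[r] \ar[d]^{\cong} & 0 \\
0 \ar[r] & H_1^{pcn}(X; K^{1-*}(B)) \ar[r] & H_1^{pcn}(X, X_0; K^{1-*}(B)) \ar[r] & H_0^{pcn}(X_0; K^{1-*}(B)) \ar[r] & 0,
}
\]
whose top row is extracted from the surjectivity of the K-theoretic connection (Remark \ref{rq:connectionSurj}) and whose bottom row is the short exact sequence established just before the theorem statement. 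Commutativity of the right square is exactly the naturality built into Theorem \ref{thm:pairing}, so the Five Lemma produces the required isomorphism $K_*(A_1) \cong H_1^{pcn}(X; K^{1-*}(B))$. To extract the rank $\#\mathcal{F}_1(X) - 1$, I would identify this group with $\ker\bigl(\partial \colon C_1(X; K^{1-*}(B)) \to C_0(X; K^{1-*}(B))\bigr)$; by Remark \ref{rq:connectionSurj} the differential is the summation $(b_f)_f \mapsto \sum_f b_f$, which is split surjective, so the kernel is a direct summand isomorphic to $K^{1-*}(B)^{\#\mathcal{F}_1(X) - 1}$.

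The only delicate point is the compatibility of the vertical isomorphisms with the horizontal maps in the comparison diagram, but this is precisely the content of Theorem \ref{thm:pairing}, so no further verification is required.
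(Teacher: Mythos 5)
Your proposal is correct and follows essentially the same route as the paper: extract the short exact sequence in K-theory from the surjectivity of the connection (Remark \ref{rq:connectionSurj}), match it against the conormal short exact sequence via the pairing of Theorem \ref{thm:pairing}, apply the Five Lemma to get $K_*(A_1) \cong H_1^{pcn}(X; K^{1-*}(B))$, and then compute the kernel of the degree-one differential to read off the rank $\#\mathcal{F}_1(X)-1$. The paper's formal proof only records the final kernel computation, but the surrounding text sets up exactly the same Five Lemma diagram you describe.
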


\begin{proof}
	$K_*(A_1) \cong H_1^{pcn}(X ; K^{1-*}(B)) \cong H_1^{cn}(X ; K^{1-*}(B)) = Ker \partial_1$, \\
	where $\partial_1$ is computed as $(b_1, \dots, b_N) \in K^{1-*}(B)^{\# \mathcal{F}_1(X)} \mapsto b_1 + \dots + b_N \in K^{1-*}(B)$. \\
	Then $Ker \partial_1 \cong <(0, \dots, 0, b, -b, 0, \dots, 0)>_{b \in K^{1-*}(B)} \cong K^{1-*}(B)^{\# \mathcal{F}_1(X) - 1}$.
\end{proof}

The next step now is to caracterise the vanishing of $r_{\partial} \, \circ \, e_1$.
% https://q.uiver.app/#q=WzAsMyxbMCwxLCJLXjAoXFxHYW1tYV57dGFufV97YiwgXFxtYXRoZnJha3tmfX0oWCkpIl0sWzAsMCwiS18wKEFfMSkiXSxbMSwwLCJLXzAoQV8xL0FfMCkiXSxbMCwxLCJlXzEiXSxbMSwyLCJyX3tcXHBhcnRpYWx9IiwwLHsic3R5bGUiOnsidGFpbCI6eyJuYW1lIjoiaG9vayIsInNpZGUiOiJ0b3AifX19XV0=
\[\begin{tikzcd}
	{K_0(A_1)} & {K_0(A_1/A_0)} \\
	{K^0(\Gamma^{tan}_{b, \mathfrak{f}}(X))}
	\arrow["{r_{\partial}}", hook, from=1-1, to=1-2]
	\arrow["{Ind^X}", from=2-1, to=1-1]
\end{tikzcd}\]

The maps $e_1$ and $r_{\partial}$ are only restrictions. The idea is now to express their composition differently. Rather than restricting the tangent groupoid to its slice $\{1\}$, then to its boundary, we will restrict the tangent groupoid to a sub tangent groupoid associated to the boundary, then we restrict it to the associated $\{1\}$ slice. The sub tangent groupoid will be identifiable as the tangent groupoid of a family of smooth manifolds, which will gives Atiyah Singer computable family indices. \\

Those restrictions interact this way:
% https://q.uiver.app/#q=WzAsNCxbMCwxLCJLXjAoXFxHYW1tYV57dGFufV97YiwgXFxtYXRoZnJha3tmfX0oWCkpIl0sWzEsMCwiS18wKEFfMSkiXSxbMSwyLCJLXjAoXFxHYW1tYV57dGFufV97YiwgXFxtYXRoZnJha3tmfX0oZykgXFx0aW1lcyBcXFJeeysqfSkiXSxbMiwxLCJLXzAoQV8xL0FfMCkiXSxbMCwxLCJlXzEiXSxbMCwyLCJcXHVuZGVyc2V0e2cgXFxpbiBcXG1hdGhjYWx7Rn1fMShYKX17XFxvcGx1c30gcl9nIiwyXSxbMiwzLCJcXHVuZGVyc2V0e2cgXFxpbiBcXG1hdGhjYWx7Rn1fMShYKX17XFxvcGx1c30gZV8xIiwyXSxbMSwzLCJyX3tcXHBhcnRpYWx9IiwwLHsic3R5bGUiOnsidGFpbCI6eyJuYW1lIjoiaG9vayIsInNpZGUiOiJ0b3AifX19XV0=
\[\begin{tikzcd}
	& {K_0(A_1)} \\
	{K^0(\Gamma^{tan}_{b, \mathfrak{f}}(X))} && {K_0(A_1/A_0)} \\
	& {K^0(\Gamma^{tan}_{b, \mathfrak{f}}(g) \times \R^{+*})}
	\arrow["{r_{\partial}}", hook, from=1-2, to=2-3]
	\arrow["{Ind^X}", from=2-1, to=1-2]
	\arrow["{\underset{g \in \mathcal{F}_1(X)}{\oplus} r_g}"', from=2-1, to=3-2]
	\arrow["{\underset{g \in \mathcal{F}_1(X)}{\oplus} Ind^g_1}"', from=3-2, to=2-3]
\end{tikzcd}\]
We get the equality:  $Ind_{\partial}([\sigma_T]_0) = r_{\partial}(Ind^X([\sigma_T]_0)) = \left(Ind_1^g([{\sigma_T}_{\vert g}]_0)\right)_{g \in \mathcal{F}_1(X)}$. \\ Notice that  maps $Ind^g_1$ are computable as suspensions of family indices \\
Then we get the following result:
\premierCodThm

We can notice the following corollary :

\begin{thm}{Vanishing of the boundary index in codimension 1 : Case $K^1(B) = 0$} \\
	When $K^1(B) = 0$, the map $Ind_{\partial}$ is identically zero.
\end{thm}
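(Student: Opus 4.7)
The plan is to derive this as an immediate corollary of the previous theorem (\texttt{premierCodThm}). Recall that in codimension $1$ the boundary index $Ind_{\partial}$ takes values in $K_0(A_1/A_0)$, and the preceding theorem identifies its vanishing with the simultaneous vanishing of the suspended family indices $Ind_1^g([{\sigma_T}_{\vert g}]_0) \in K^1(B)$, one for each codimension $1$ face $g \in \mathcal{F}_1(X)$. Hence the argument reduces to observing that these receptacles are trivial.

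First, I would unpack the codomain. Each smooth codimension $1$ face $g$ is a smooth family over $B$, so the ordinary family index of the restricted operator lies in $K^0(B)$; after suspension by $\R^{+*}$ (as in the definition of $Ind_l^g$ with $l=1$), the values lie in $K^1(B)$. Under the hypothesis $K^1(B)=0$, every such $Ind_1^g([{\sigma_T}_{\vert g}]_0)$ is trivially zero, and \texttt{premierCodThm} immediately yields $Ind_{\partial}([\sigma_T]_0)=0$ for every $[\sigma_T]_0 \in K^0(\Gamma^{tan}_{b,\mathfrak{f}}(X))$.

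As a sanity check, I would also verify this directly at the level of the codomain: by the pairing theorem \ref{thm:pairing} and the computations collected just above, one has
\[
K_0(A_1/A_0) \;\cong\; H_1^{pcn}(X,X_0;K^1(B)) \;\cong\; K^1(B)^{\#\mathcal{F}_1(X)},
\]
which is zero as soon as $K^1(B)=0$. Thus the entire target group of $Ind_{\partial}$ collapses, confirming the conclusion without even invoking the compatibility with face restrictions.

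There is essentially no obstacle here — the content of the corollary is entirely carried by the two preceding theorems, and the proof is a one-line bookkeeping. The only small care to take is to make the coefficient shift ($K^{1-*}(B)$ versus $K^{*}(B)$) explicit, so that it is clear why it is $K^1(B)$ and not $K^0(B)$ that must vanish in order to kill the codomain of $Ind_{\partial}$ on classes coming from $K_0$.
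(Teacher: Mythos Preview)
Your proposal is correct and matches the paper's approach: the paper states this result as an immediate corollary of \texttt{premierCodThm} without giving any further proof, and your argument via the vanishing of each $Ind_1^g$ in $K^1(B)=0$ (together with the direct check that the codomain $K_0(A_1/A_0)\cong K^1(B)^{\#\mathcal{F}_1(X)}$ vanishes) is exactly the intended reasoning.
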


\subsection{Boundary index for codimension 2 families globally embedded}

In this section, we caracterise the vanishing of the boundary index for a codimension 2 family globally endowed with embedded corners.
Now $\pi : \widetilde{X} \longrightarrow B$ denotes a codimension 2 family of manifold with embedded corners ($X$ itself endowed with embedded corners), $X$ is supposed to be connected. \\
Once again its Monthubert groupoid
$\Gamma_{b, \mathfrak{f}}(X) = 
\overset{\circ}{X} \underset{B}{\times} \overset{\circ}{X} \,
 \underset{\sim}{\bigsqcup} 
 \underset{g \in \mathcal{F}_1}{\sqcup} (g \underset{B}{\times} g \times \R^{*+}) \, 
 \underset{\sim}{\bigsqcup}
 \underset{g \in \mathcal{F}_2}{\sqcup} (g \underset{B}{\times} g \times \R^{*+2})$ will be endowed with the obvious filtration of his full, supposed amenable, $C^*$- algebra: $\emptyset = A_{-1} \subset A_0 \subset A_1 \subset A_2 = C^*(\Gamma_{b, \mathfrak{f}}(X))$. \\ 

First we recall that from the short exact sequence $0 \rightarrow \frac{A_1}{A_0} \rightarrow \frac{A_2}{A_0} \rightarrow \frac{A_2}{A_1} \rightarrow 0$ we can produce  a 6 terms short exact sequence
% https://q.uiver.app/#q=WzAsNixbMCwwLCJLXzEoQV8xL0FfMCkiXSxbMSwwLCJLXzEoQV8yL0FfMCkiXSxbMiwwLCJLXzEoQV8yL0FfMSkiXSxbMiwxLCJLXzAoQV8xL0FfMCkiXSxbMSwxLCJLXzAoQV8yL0FfMCkiXSxbMCwxLCJLXzAoQV8yL0FfMSkiXSxbMCwxXSxbMSwyXSxbMiwzLCJkX3syLDF9Il0sWzUsMCwiZF97MiwwfSJdLFszLDRdLFs0LDVdXQ==
\[\begin{tikzcd}
	{K_1(A_1/A_0)} & {K_1(A_2/A_0)} & {K_1(A_2/A_1)} \\
	{K_0(A_2/A_1)} & {K_0(A_2/A_0)} & {K_0(A_1/A_0)}
	\arrow[from=1-1, to=1-2]
	\arrow[from=1-2, to=1-3]
	\arrow["{d_{2,1}}", from=1-3, to=2-3]
	\arrow["{d_{2,0}}", from=2-1, to=1-1]
	\arrow[from=2-2, to=2-1]
	\arrow[from=2-3, to=2-2]
\end{tikzcd}\]
where the morphisms $d_{2,*}$, $*=0, 1$, are explicitely computed in terms of conormal homology. \\
From this we can force the short exactness to get:

\begin{equation}{\label{diag:encadrementCodim2}}
	   0 \longrightarrow
	\frac{K_0(A_1/A_0)}{Im \, d_{2, 1}} \longrightarrow
	K_0(A_2/A_0) \longrightarrow
	Ker \, d_{2,0} \longrightarrow 0.
\end{equation}

Those left and right terms can be computed using 6 term exact sequence in conormal homology (theorem \ref{diag:codim2Homology}) and the $K_*-H_*^{pcn}$ pairing (theorem \ref{thm:pairing}) to extend the correspondance as follows: \\

% https://q.uiver.app/#q=WzAsMTIsWzAsMCwiMCJdLFsxLDAsIktlciBcXCwgZF97MiwqfSJdLFsyLDAsIktfKihBXzIvQV8xKSJdLFszLDAsIktfezEtKn0oQV8xL0FfMCkiXSxbNCwwLCJcXGZyYWN7S197MS0qfShBXzEvQV8wKX17SW0gXFwsIGRfezIsKn19Il0sWzUsMCwiMCJdLFswLDEsIjAiXSxbNSwxLCIwIl0sWzEsMSwiSF8wXntwY259KFgsIFhfMCA7IEteKihCKSkiXSxbMiwxLCJIXzBee3Bjbn0oWCwgWF8xIDsgS14qKEIpKSJdLFszLDEsIkhfMV57cGNufShYXzEsIFhfMCA7IEteKihCKSkiXSxbNCwxLCJIXzFee3Bjbn0oWCwgWF8wIDsgS14qKEIpKSJdLFswLDFdLFsxLDJdLFsyLDNdLFszLDRdLFs0LDVdLFs2LDhdLFs4LDldLFs5LDEwXSxbMTAsMTFdLFsxMSw3XSxbOSwyLCJcXGNvbmciLDJdLFsxMCwzLCJcXGNvbmciXSxbOCwxLCIiLDEseyJzdHlsZSI6eyJib2R5Ijp7Im5hbWUiOiJkb3R0ZWQifX19XSxbMTEsNCwiIiwxLHsic3R5bGUiOnsiYm9keSI6eyJuYW1lIjoiZG90dGVkIn19fV1d
{\small
\[\begin{tikzcd}
	0 & {Ker \, d_{2,*}} & {K_*(A_2/A_1)} & {K_{1-*}(A_1/A_0)} & {\frac{K_{1-*}(A_1/A_0)}{Im \, d_{2,*}}} & 0 \\
	0 & {H_0^{pcn}(X, X_0 ; K^*(B))} & {H_0^{pcn}(X, X_1 ; K^*(B))} & {H_1^{pcn}(X_1, X_0 ; K^*(B))} & {H_1^{pcn}(X, X_0 ; K^*(B))} & 0
	\arrow[from=1-1, to=1-2]
	\arrow[from=1-2, to=1-3]
	\arrow[from=1-3, to=1-4]
	\arrow[from=1-4, to=1-5]
	\arrow[from=1-5, to=1-6]
	\arrow[from=2-1, to=2-2]
	\arrow[dotted, from=2-2, to=1-2]
	\arrow[from=2-2, to=2-3]
	\arrow["\cong"', from=2-3, to=1-3]
	\arrow[from=2-3, to=2-4]
	\arrow["\cong", from=2-4, to=1-4]
	\arrow[from=2-4, to=2-5]
	\arrow[dotted, from=2-5, to=1-5]
	\arrow[from=2-5, to=2-6]
\end{tikzcd}\]} \\

Using the five's lemma gives us two isomorphims which are only restriction and quotient of the ones we already know. Applying them to the sequence (\ref{diag:encadrementCodim2}) gives us our first obstruction result:

\secondObstrctSpaceThm

To express the obstruction we are looking for, we will caracterise the vanishing of the boundary index $r_{\partial} \circ Ind^X : K^0(\Gamma^{tan}_{b, \mathfrak{f}}(X)) \longrightarrow K_0(A_2/A_0)$ in terms of smaller indices and conormal homology. \\

To caracterise the vanishing of $Ind_{\partial}$ we begin to notice that the restriction morphism $C^*(A_2/A_0) \longrightarrow C^*(A_2/A_1)$ gives the commutative triangle:
% https://q.uiver.app/#q=WzAsMyxbMCwwLCJLXzAoQV8yKSJdLFsyLDAsIktfMChBXzIvQV8wKSJdLFsyLDEsIktfMChBXzIvQV8xKSJdLFswLDEsInJfe1xccGFydGlhbH0iLDAseyJzdHlsZSI6eyJ0YWlsIjp7Im5hbWUiOiJob29rIiwic2lkZSI6InRvcCJ9fX1dLFswLDIsInJfMiIsMl0sWzEsMl1d
\begin{equation}\label{diag:triangler2}
	\begin{tikzcd}
		{K_0(A_2)} && {K_0(A_2/A_0)} \\
		&& {K_0(A_2/A_1)}
		\arrow["{r_{\partial}}", hook, from=1-1, to=1-3]
		\arrow["{r_2}"', from=1-1, to=2-3]
		\arrow[from=1-3, to=2-3]
	\end{tikzcd}
\end{equation}

\begin{rem}{}
	The map $r_{\partial} : K_0(A_2) \hookrightarrow K_0(A_2/A_0)$ is injective because in the short exact sequence morphism:
\[\begin{tikzcd}
	0 & {A_0} & {A_2} & {A_2/A_0} & 0 \\
	0 & {A_0} & {A_1} & {A_1/A_0} & 0,
	\arrow[from=1-1, to=1-2]
	\arrow[from=1-2, to=1-3]
	\arrow[from=1-3, to=1-4]
	\arrow[from=1-4, to=1-5]
	\arrow[from=2-1, to=2-2]
	\arrow["{=}"{description}, no head, from=2-2, to=1-2]
	\arrow[from=2-2, to=2-3]
	\arrow[from=2-3, to=1-3]
	\arrow[from=2-3, to=2-4]
	\arrow[from=2-4, to=1-4]
	\arrow[from=2-4, to=2-5]
\end{tikzcd}\]
we know from remark \ref{rq:connectionSurj} that the lower row has surjective connection maps. Then the upper row too. The 6 term exact sequence associated to the upper row gives then the injectivity.
\end{rem}

The codimension of $X$ being equal to $2$, the codimension $2$ faces have disjoint closures, which will allows us to compute $r_2$ facewise, moreover we have explicitely $K_0(A_2/A_1) \cong K^0(B)^{\mathcal{F}_2(X)}$. Using this we have the following lemma: \\

\begin{lem}
		$r_2(Ind^X([\sigma_T]_0)) = 
		\left( Ind_2^f([{\sigma_T}_{\vert f}]_0) \right)_{f \in \mathcal{F}_2(X)}$.
\end{lem}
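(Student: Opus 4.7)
The plan is to reduce the computation of $r_2 \circ Ind^X$ to a facewise statement via the disjointness of closures of codimension $2$ faces, then to identify each facewise summand via a local product decomposition of the tangent groupoid around a codimension $2$ face.

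First, since $X$ is a codimension $2$ embedded corner family, the closures of distinct codimension $2$ faces are pairwise disjoint. Consequently $A_2/A_1$ splits as a $C^*$-algebraic direct sum
\[
A_2 / A_1 \cong \bigoplus_{f \in \mathcal{F}_2(X)} C^*(f \underset{B}{\times} f \times \R^{+*2}),
\]
and $r_2$ decomposes as $\bigoplus_{f} r_f$, where $r_f$ is the restriction to the $f$-summand. It therefore suffices to prove, for each $f$, that $r_f(Ind^X([\sigma_T]_0)) = Ind_2^f([{\sigma_T}_{\vert f}]_0)$.

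Next, I would fix a tubular neighborhood $U \subset \widetilde{X}$ containing $f$, compatible with the submersion $\pi$ (remark \ref{tubNeighborhood}). Over $U$, the local model of the Monthubert groupoid, together with the groupoid homeomorphism (\ref{homeo:grpdActions}), yields a canonical identification
\[
\Gamma_{b, \mathfrak{f}}(X)_{\vert U} \cong (f \underset{B}{\times} f) \times \bigl(\R^{+2} \rtimes \R^{+*2}\bigr).
\]
Since the deformation-to-the-normal-cone construction is functorial under products, this identification lifts to the tangent groupoid, giving
\[
\Gamma^{tan}_{b, \mathfrak{f}}(X)_{\vert U \times [0,1]} \cong \Gamma^{tan}_{b, \mathfrak{f}}(f) \times \bigl(\R^{+2} \rtimes \R^{+*2}\bigr),
\]
where the right-hand side is a product of groupoids (the second factor carrying no deformation parameter). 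Restricting the unit space to $\lambda = 0$ collapses $\R^{+2} \rtimes \R^{+*2}$ to $\R^{+*2}$, giving exactly the $f$-component of $A_2/A_1$. This construction also defines the restricted symbol $[{\sigma_T}_{\vert f}]_0 \in K^0(\Gamma^{tan}_{b, \mathfrak{f}}(f) \times \R^{+*2})$ as the image of $[\sigma_T]_0$ under the composite of the open restriction to $U \times [0,1]$ and the projection onto the $f$-stratum.

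Finally, evaluation $e_1$ is natural with respect to restrictions to saturated subgroupoids. Applied to the tower
\[
\Gamma^{tan}_{b, \mathfrak{f}}(X) \longrightarrow \Gamma^{tan}_{b, \mathfrak{f}}(X)_{\vert U \times [0,1]} \longrightarrow \Gamma^{tan}_{b, \mathfrak{f}}(f) \times \R^{+*2},
\]
naturality produces a commutative diagram whose outer composition realises $r_f \circ Ind^X$ on the left and $Ind_2^f$ on the right, proving the lemma. The main delicate point is the product identification of the tangent groupoid of Step 2: one has to check that the deformation to the normal cone of $\Gamma_{b, \mathfrak{f}}(X)$ with respect to its unit space is compatible with the local splitting, and in particular that the collapse $\lambda = 0$ extracts precisely the summand $C^*(f \underset{B}{\times} f \times \R^{+*2})$; once this is in place, the rest is a routine diagram chase using functoriality of $e_1$.
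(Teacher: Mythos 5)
Your overall strategy — decompose $A_2/A_1$ facewise using disjointness of codimension $2$ closures, then identify each facewise component of the tangent groupoid and invoke naturality of $e_1$ — is the same as the paper's (the paper literally proves the lemma by asserting the commutativity of precisely this facewise diagram). But there is a concrete error in your central step.

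You claim that because $\Gamma_{b,\mathfrak{f}}(X)_{\vert U} \cong (f\underset{B}{\times}f)\times(\R^{+2}\rtimes\R^{+*2})$, functoriality of the deformation to the normal cone gives
\[
\Gamma^{tan}_{b,\mathfrak{f}}(X)_{\vert U\times[0,1]} \cong \Gamma^{tan}_{b,\mathfrak{f}}(f)\times\bigl(\R^{+2}\rtimes\R^{+*2}\bigr),
\]
with the second factor undeformed. This is false. The DNC of a product groupoid with respect to its unit space is a \emph{fibre product over $[0,1]$} of the two DNCs, not a product of one tangent groupoid with one undeformed factor: writing $G_1=f\underset{B}{\times}f$ and $G_2=\R^{+2}\rtimes\R^{+*2}$, one has $(G_1\times G_2)^{tan}\cong G_1^{tan}\times_{[0,1]}G_2^{tan}$. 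The groupoid $\R^{+2}\rtimes\R^{+*2}$ has a genuinely nontrivial DNC with respect to its unit space $\R^{+2}$: its algebroid is the $b$-tangent bundle $T^b\R^{+2}$, a vector bundle over $\R^{+2}$, which is not isomorphic (as a groupoid) to the action groupoid $\R^{+2}\rtimes\R^{+*2}$. So the fibre at $t=0$ of your claimed right-hand side, namely $T^{\pi}f\times(\R^{+2}\rtimes\R^{+*2})$, does not agree with the true algebroid $T^{\pi}f\boxplus T^b\R^{+2}$.

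The fix is to skip the open tubular neighbourhood and restrict the tangent groupoid directly to the \emph{closed} face $f\times\{0\}^2\times[0,1]$, which is what $r_f$ — as a quotient onto the closed stratum, not an open restriction — actually does. There the relevant factor is the isotropy group $\{0\}^2\rtimes\R^{+*2}\cong\R^{+*2}$, and the DNC of a Lie group with respect to its identity element is trivial (it is isomorphic to the constant family $\R^{+*2}\times[0,1]$ after identifying the Lie algebra via $\exp$). Hence
\[
\Gamma^{tan}_{b,\mathfrak{f}}(X)_{\vert f\times[0,1]} \cong \Gamma^{tan}_{b,\mathfrak{f}}(f)\times_{[0,1]}\bigl(\R^{+*2}\bigr)^{tan} \cong \Gamma^{tan}_{b,\mathfrak{f}}(f)\times\R^{+*2},
\]
and from here your naturality-of-$e_1$ argument goes through unchanged. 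Your final objects are correct, but the product identification over the open set $U\times[0,1]$ cannot be made and should be removed.
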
 

\begin{proof}
	The expression comes from the commutativity of the following diagram:
% https://q.uiver.app/#q=WzAsNCxbMCwxLCJLXjAoXFxHYW1tYV97YiwgXFxtYXRoZnJha3tmfX1ee3Rhbn0oWCkpIl0sWzEsMCwiS18wKEFfMikiXSxbMiwxLCJLXzAoQV8yL0FfMSkiXSxbMSwyLCJcXHVuZGVyc2V0e2YgXFxpbiBcXG1hdGhjYWx7Rn1fMihYKX17XFxiaWdvcGx1c30gS14wKFxcR2FtbWFfe2IsIFxcbWF0aGZyYWt7Zn19Xnt0YW59KFgpIFxcdGltZXMgXFxSXnsrKjJ9KSJdLFswLDEsIlxcc21hbGx7ZV8xID0gSW5kXlh9IiwwLHsibGFiZWxfcG9zaXRpb24iOjQwfV0sWzEsMiwiXFxzbWFsbHtyXzJ9Il0sWzAsMywiXFx0aW55e1xcdW5kZXJzZXR7ZiBcXGluIFxcbWF0aGNhbHtGfV8yKFgpfXtcXG9wbHVzfSByX2Z9IiwyLHsibGFiZWxfcG9zaXRpb24iOjMwfV0sWzMsMiwiXFx0aW55e1xcdW5kZXJzZXR7ZiBcXGluIFxcbWF0aGNhbHtGfV8yKFgpfXtcXGJpZ29wbHVzfSBlXzEgPSBcXHVuZGVyc2V0e2YgXFxpbiBcXG1hdGhjYWx7Rn1fMihYKX17XFxiaWdvcGx1c30gSW5kXmZfe1NeMn19IiwyLHsibGFiZWxfcG9zaXRpb24iOjcwfV1d
\[\begin{tikzcd}
	& {K_0(A_2)} \\
	{K^0(\Gamma_{b, \mathfrak{f}}^{tan}(X))} && {K_0(A_2/A_1)} \\
	& {\underset{f \in \mathcal{F}_2(X)}{\bigoplus} K^0(\Gamma_{b, \mathfrak{f}}^{tan}(X) \times \R^{+*2})}
	\arrow["{\small{r_2}}", from=1-2, to=2-3]
	\arrow["{\small{Ind^X}}"{pos=0.4}, from=2-1, to=1-2]
	\arrow["{\tiny{\underset{f \in \mathcal{F}_2(X)}{\oplus} r_f}}"'{pos=0.3}, from=2-1, to=3-2]
	\arrow["{\tiny{\underset{f \in \mathcal{F}_2(X)}{\bigoplus} Ind^f_2}}"'{pos=0.7}, from=3-2, to=2-3]
\end{tikzcd}\]
\end{proof}
With $r_2$ we have a computable index. The diagram (\ref{diag:triangler2}) tells us that $Ind_{\partial}([\sigma_T]_0)=  r_{\partial}(Ind^X([\sigma_T]_0)) = 0$ implies $r_2(Ind^X([\sigma_T]_0)) = 0$. To examine the reciprocal let's see what the vanishing of $r_2(Ind^X([\sigma_T]_0))$ brings us. \\ \\
The open close decomposition, see (\ref{ouvertferme}) in section 1, $\Gamma_{b, \mathfrak{f}}^{tan}(X) = \Gamma^{fred(1)}_{b, \mathfrak{f}}(X) \sqcup \Gamma_{b, \mathfrak{f}}(X)_{\vert \mathcal{F}_2(X)}$ being compatible with the decomposition $\Gamma_{b, \mathfrak{f}}(X) = \Gamma_{b, \mathfrak{f}}(X)_{\vert \mathcal{F}_{\leqslant 1}(X)} \sqcup \Gamma_{b, \mathfrak{f}}(X)_{\vert \mathcal{F}_2(X)}$ exaluation on slice $X \times \{1\}$ provides a morphism of exact sequences: \\
% https://q.uiver.app/#q=WzAsNixbMCwwLCJLXzAoQV8xKSJdLFsxLDAsIktfMChBXzIpIl0sWzIsMCwiS18wKEFfMi9BXzEpIl0sWzAsMSwiXFxHYW1tYV97YiwgXFxtYXRoZnJha3tmfX1ee3Rhbn0oWClfe1xcdmVydCBYIFxcdGltZXMgWzAsIDFdIFxcc2V0bWludXMgXFxtYXRoY2Fse0Z9XzIoWCkgXFx0aW1lcyBcXHsxXFx9fSJdLFsxLDEsIlxcR2FtbWFee3Rhbn1fe2IsIFxcbWF0aGZyYWt7Zn19KFgpIl0sWzIsMSwiS18wKEFfMi9BXzEpIl0sWzAsMV0sWzEsMiwicl8yIl0sWzMsNF0sWzQsNV0sWzMsMCwiZV8xIDo9IEluZF97bmN9XntYLCBcXG1hdGhjYWx7Rn1fMihYKX0iXSxbNCwxLCJlXzEgPSBJbmReWCJdLFs1LDIsIj0iLDEseyJzdHlsZSI6eyJoZWFkIjp7Im5hbWUiOiJub25lIn19fV1d
\[\begin{tikzcd}
	{K_0(A_1)} & {K_0(A_2)} & {K_0(A_2/A_1)} \\
	{K^0(\Gamma_{b, \mathfrak{f}}^{fred(1)}(X)} & {K^0(\Gamma^{tan}_{b, \mathfrak{f}}(X))} & {K_0(A_2/A_1)}
	\arrow[from=1-1, to=1-2]
	\arrow["{r_2}", from=1-2, to=1-3]
	\arrow["{Ind_{nc(1)}^{X}}", from=2-1, to=1-1]
	\arrow[from=2-1, to=2-2]
	\arrow["{Ind^X}", from=2-2, to=1-2]
	\arrow[from=2-2, to=2-3]
	\arrow["{=}"{description}, no head, from=2-3, to=1-3]
\end{tikzcd}.\]
From lines exactenss we can deduce the following property:
\begin{prop}
	Denoting $\underline{.}$ the extension by zero map $C^*(\Gamma_{b, \mathfrak{f}}^{fred(1)}(X)) \longrightarrow C^*(\Gamma_{b, \mathfrak{f}}^{tan}(X))$: \\	
	\[
		r_2(Ind^X([\sigma_T]_0)) = 0
		\Leftrightarrow
		\exists \phi \in C^*(\Gamma_{b, \mathfrak{f}}^{fred(1)}(X)) \text{ such that } \underline{\phi} = \sigma_T.
	\]
\end{prop}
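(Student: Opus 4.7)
The plan is to run a diagram chase on the morphism of short exact sequences displayed just before the statement, namely the top row
$$0 \to C^*(\Gamma^{fred(1)}_{b,\mathfrak{f}}(X)) \to C^*(\Gamma^{tan}_{b,\mathfrak{f}}(X)) \to A_2/A_1 \to 0$$
and the bottom row
$$0 \to A_1 \to A_2 \to A_2/A_1 \to 0,$$
linked by evaluation at $X \times \{1\}$, where the rightmost vertical map is the identity. The equivalence will follow from the exactness of the top row in $K$-theory together with the commutativity of the right-hand square.

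For the easy direction ($\Leftarrow$), assume $\underline{\phi} = \sigma_T$ for some $\phi$ in the relative $C^*$-algebra. Since $\underline{\cdot}$ has image equal to the kernel of the quotient $C^*(\Gamma^{tan}_{b,\mathfrak{f}}(X)) \to A_2/A_1$, the element $\sigma_T$ is sent to zero by this quotient. By functoriality $[\sigma_T]_0$ vanishes in $K_0(A_2/A_1)$, and the commutativity of the right-hand square (with identity as the vertical map) identifies this image with $r_2(Ind^X([\sigma_T]_0))$, which is therefore zero.

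For the reverse direction ($\Rightarrow$), I traverse the same diagram the other way. Starting from $r_2(Ind^X([\sigma_T]_0)) = 0$, the commutative square shows that the image of $[\sigma_T]_0$ under the top-row quotient $K^0(\Gamma^{tan}_{b,\mathfrak{f}}(X)) \to K_0(A_2/A_1)$ is zero. Exactness of the top row in $K$-theory at its middle term then produces a class $[\phi]_0 \in K^0(\Gamma^{fred(1)}_{b,\mathfrak{f}}(X))$ whose image under the map induced by $\underline{\cdot}$ equals $[\sigma_T]_0$. Selecting any representative $\phi \in C^*(\Gamma^{fred(1)}_{b,\mathfrak{f}}(X))$ of this class delivers the announced element of the subalgebra.

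The main obstacle is the literal reading of the equality $\underline{\phi} = \sigma_T$: the diagram chase yields only the $K$-theoretic identity $[\underline{\phi}]_0 = [\sigma_T]_0$, so the statement has to be understood with $\sigma_T$ regarded as a non commutative symbol up to its $K$-class, in line with how such symbols are manipulated throughout the section. With this convention in place, the argument is a purely homological chase and requires no further structural input.
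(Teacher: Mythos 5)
Your proof is correct and runs the same diagram chase the paper relies on; the paper itself just displays the morphism of exact $K$-theory sequences and says ``from lines exactness,'' and you supply exactly the missing details in both directions. Your caveat is legitimate: exactness of the bottom row at $K^0(\Gamma^{tan}_{b,\mathfrak{f}}(X))$ only produces the $K$-theoretic identity $[\underline{\phi}]_0 = [\sigma_T]_0$ rather than an algebra-level equality, so the proposition's ``$\underline{\phi}=\sigma_T$'' is an abuse of notation to be read at the level of $K$-classes --- which is harmless here, since the subsequent lemma and the main theorem only ever invoke the classes $[\underline{\overset{\circ}{\sigma_T}}]_0$ and $[\overset{\circ}{\sigma_T}{}_{\vert \overline{g}}]_0$, not the element $\overset{\circ}{\sigma_T}$ itself.
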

In what follows, such a lift $\phi$ will be denoted $\overset{\circ}{\sigma_T}$. In particular $\underline{\overset{\circ}{\sigma_T}} = \sigma_T$. \\ \\
The $r_2$ vanishing gives us a lift to a non commutative symbol $\overset{\circ}{\sigma_T}$, the next step is then to understand the vanishing of $Ind^X([\underline{\overset{\circ}{\sigma_T}}]_0)$. This is the aim of the following lemma:
\begin{lem}
	Let $\phi \in C^*(\Gamma_{b, \mathfrak{f}}^{fred(1)}(X))$ a non commutative symbol. \\
	Then:
	\[
		r_{\partial} \circ Ind^X([\underline{\phi}]_0) = 0
		\Leftrightarrow
		\left[r_1 \circ Ind_{nc(1)}^X([\phi]_0)
		\right]_{H_1^{pcn}(X, X_0 ; K^1(B))} = 0 \, \, \, \textit{(a cycle for that homology).}
	\]
\end{lem}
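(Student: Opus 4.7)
The plan is to reduce the statement to the analysis of the inclusion-induced map $\iota_{*} : K_0(A_1/A_0) \to K_0(A_2/A_0)$, using the six-term exact sequence attached to $0 \to A_1/A_0 \to A_2/A_0 \to A_2/A_1 \to 0$, and then to transport the resulting obstruction across the $K_\ast$–$H_\ast^{pcn}$ pairing of Theorem~\ref{thm:pairing}.

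First, I would apply naturality of the quotient by $A_0$ to the inclusion $A_1 \hookrightarrow A_2$, yielding the commutative square
\[
\begin{tikzcd}
K_0(A_1) \ar[r] \ar[d, "r_1"'] & K_0(A_2) \ar[d, "r_\partial"] \\
K_0(A_1/A_0) \ar[r, "\iota_{*}"] & K_0(A_2/A_0).
\end{tikzcd}
\]
Combining this with the commutative diagram preceding the lemma, which exhibits $Ind^X([\underline{\phi}]_0)$ as the image of $Ind_{nc(1)}^X([\phi]_0)$ under the upper horizontal map $K_0(A_1) \to K_0(A_2)$, one obtains the key identity
\[
r_\partial \circ Ind^X([\underline{\phi}]_0) = \iota_{*}\bigl(r_1 \circ Ind_{nc(1)}^X([\phi]_0)\bigr) \in K_0(A_2/A_0).
\]

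Second, the six-term exact sequence of $0 \to A_1/A_0 \to A_2/A_0 \to A_2/A_1 \to 0$ identifies $\ker\iota_{*}$ with $\mathrm{Im}\,d_{2,1}$, so vanishing of the right-hand side above is equivalent to $r_1 \circ Ind_{nc(1)}^X([\phi]_0)$ belonging to $\mathrm{Im}\,d_{2,1} \subset K_0(A_1/A_0)$, i.e.\ to the vanishing of its class in $K_0(A_1/A_0)/\mathrm{Im}\,d_{2,1}$. Under the pairing of Theorem~\ref{thm:pairing}, $K_0(A_1/A_0) \cong H_1^{pcn}(X_1,X_0;K^1(B))$ and $d_{2,1}$ corresponds to the boundary operator appearing in the codimension-2 conormal long exact sequence of Theorem~\ref{diag:codim2Homology}. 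That sequence precisely identifies the quotient $H_1^{pcn}(X_1,X_0;K^1(B))/\mathrm{Im}\,d_{2,1}$ with $H_1^{pcn}(X,X_0;K^1(B))$, which is the target appearing in the statement of the lemma. Chasing the class of $r_1 \circ Ind_{nc(1)}^X([\phi]_0)$ through this chain of isomorphisms yields exactly the cycle class $\bigl[r_1 \circ Ind_{nc(1)}^X([\phi]_0)\bigr]_{H_1^{pcn}(X,X_0;K^1(B))}$, and the equivalence follows.

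The technical heart of the argument, and the step I expect to require the most care, is the compatibility between the connecting homomorphism $d_{2,1}$ and the conormal boundary: one must check that the identifications produced by Theorem~\ref{thm:pairing} intertwine $d_{2,1}$ with the differential $\partial : C_2(X;K^1(B)) \to C_1(X;K^1(B))$, so that the two quotients coincide. This was essentially already carried out in the proof of Theorem~\ref{diag:codim2Homology} and in the derivation of the short exact sequence (\ref{diag:encadrementCodim2}) that underlies the obstruction-space estimation theorem, so the present lemma is a direct naturality consequence of those identifications.
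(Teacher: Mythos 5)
Your proposal is correct and follows essentially the same route as the paper: you factor the key identity $r_\partial \circ Ind^X([\underline\phi]_0) = \iota_*(r_1 \circ Ind_{nc(1)}^X([\phi]_0))$ through the quotient-by-$A_0$ naturality square and the extension diagram, use exactness of the six-term sequence at $K_0(A_1/A_0)$ to identify $\ker\iota_* = \operatorname{Im} d_{2,1}$, and then translate $\operatorname{Im} d_{2,1}$ into the kernel of the quotient map onto $H_1^{pcn}(X,X_0;K^1(B))$ via the $K_*$–$H^{pcn}_*$ pairing of Theorem~\ref{thm:pairing}. The paper's proof does exactly this, only drawing all the relevant maps into a single commutative diagram rather than separating the naturality square from the extension square, and the identification $d_{2,1} \leftrightarrow \partial_2$ that you flag as the technical heart is supplied directly by Theorem~\ref{thm:pairing} rather than by the proof of Theorem~\ref{diag:codim2Homology}.
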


\begin{proof}
	From the commutative diagram:
	% https://q.uiver.app/#q=WzAsNyxbMSwxLCJLXzAoQV8xKSJdLFsyLDEsIktfMChBXzIpIl0sWzEsMiwiS14wKFxcR2FtbWFfe2IsIFxcbWF0aGZyYWt7Zn19Xnt0YW59KFgpX3tcXHZlcnQgWCBcXHRpbWVzIFswLCAxXSBcXHNldG1pbnVzIFxcbWF0aGNhbHtGfV8yKFgpIFxcdGltZXMgXFx7MVxcfX0pIl0sWzIsMiwiS14wKFxcR2FtbWFee3Rhbn1fe2IsIFxcbWF0aGZyYWt7Zn19KFgpKSJdLFsxLDAsIktfMChBXzEvQV8wKSJdLFsyLDAsIktfMChBXzIvQV8wKSJdLFswLDAsIktfMShBXzIvQV8xKSJdLFswLDFdLFsyLDNdLFsyLDAsIkluZF97bmN9XntYLCBcXG1hdGhjYWx7Rn1fMihYKX0iXSxbMywxLCJJbmReWCJdLFswLDQsInJfMSIsMCx7InN0eWxlIjp7InRhaWwiOnsibmFtZSI6Imhvb2siLCJzaWRlIjoidG9wIn19fV0sWzEsNSwicl97XFxwYXJ0aWFsfSIsMCx7InN0eWxlIjp7InRhaWwiOnsibmFtZSI6Imhvb2siLCJzaWRlIjoidG9wIn19fV0sWzQsNV0sWzYsNCwiZF97MiwgMX0iLDJdXQ==
\[\begin{tikzcd}
	{K_1(A_2/A_1)} & {K_0(A_1/A_0)} & {K_0(A_2/A_0)} \\
	& {K_0(A_1)} & {K_0(A_2)} \\
	& {K^0(\Gamma_{b, \mathfrak{f}}^{fred(1)}(X))} & {K^0(\Gamma^{tan}_{b, \mathfrak{f}}(X))}
	\arrow["{d_{2, 1}}"', from=1-1, to=1-2]
	\arrow[from=1-2, to=1-3]
	\arrow["{r_1}", hook, from=2-2, to=1-2]
	\arrow[from=2-2, to=2-3]
	\arrow["{r_{\partial}}", hook, from=2-3, to=1-3]
	\arrow["{Ind_{nc(1)}^X}", from=3-2, to=2-2]
	\arrow[from=3-2, to=3-3]
	\arrow["{Ind^X}", from=3-3, to=2-3]
\end{tikzcd}\]
(the first line is exact) we have the equivalence: \\
\[
	r_{\partial} \circ Ind^X([\underline{\phi}]_0) = 0
	\Leftrightarrow
	r_1 \circ Ind_{nc(1)}^X([\phi]_0) \in Im \, d_{2,1}
\]
Now to caracterise $Im \, d_{2,1}$ we consider the following exact sequence morphism:
% https://q.uiver.app/#q=WzAsNixbMSwxLCJLXzAoQV8xL0FfMCkiXSxbMiwxLCJLXzAoQV8yL0FfMCkiXSxbMCwxLCJLXzEoQV8yL0FfMSkiXSxbMCwwLCJDXzIoWCA7IEteMShCKSkiXSxbMSwwLCJDXzEoWCA7IEteMShCKSkiXSxbMiwwLCJIXzFee3Bjbn0oWCwgWF8wIDsgS14xKEIpKSJdLFswLDFdLFsyLDAsImRfezIsIDF9IiwyXSxbNCw1LCJbLl1fe0hfMV57cGNufX0iXSxbMyw0LCJcXHBhcnRpYWxfMiJdLFszLDIsIlxcY29uZyIsMSx7InN0eWxlIjp7ImhlYWQiOnsibmFtZSI6Im5vbmUifX19XSxbNCwwLCJcXGNvbmciLDEseyJzdHlsZSI6eyJoZWFkIjp7Im5hbWUiOiJub25lIn19fV0sWzUsMSwiIiwxLHsic3R5bGUiOnsidGFpbCI6eyJuYW1lIjoiaG9vayIsInNpZGUiOiJib3R0b20ifX19XV0=
\[\begin{tikzcd}
	{C_2(X ; K^1(B))} & {C_1(X ; K^1(B))} & {H_1^{pcn}(X, X_0 ; K^1(B))} \\
	{K_1(A_2/A_1)} & {K_0(A_1/A_0)} & {K_0(A_2/A_0)}
	\arrow["{\partial_2}", from=1-1, to=1-2]
	\arrow["\cong"{description}, no head, from=1-1, to=2-1]
	\arrow["{[.]_{H_1^{pcn}}}", from=1-2, to=1-3]
	\arrow["\cong"{description}, no head, from=1-2, to=2-2]
	\arrow[hook', from=1-3, to=2-3]
	\arrow["{d_{2, 1}}"', from=2-1, to=2-2]
	\arrow[from=2-2, to=2-3]
\end{tikzcd}\]
the isomorphisms at the left and the middle are obtained from the $K-H^{pcn}$ pairing (theorem \ref{thm:pairing}), and the injection is obtained using the isomorphism theorem on the morphism $K_0(A_1/A_0) \longrightarrow K_0(A_2/A_0)$. \\
From this diagram we have that $Im \, d_{2,1} \cong Ker \, [.]_{H_1^{pcn}}$ via the (canonical) isomorphism of the pairing.
\end{proof}
Then the conormal homology of $r_1 \circ Ind_{nc(1)}^X([\phi]_0)$ provides us a combinatoric condition for the vanishing of the boundary index of a lifted symbol $\phi$. To make this last condition more explicite, we use codimension 1 results. \\

The key fact is to notice that for every $g \in \mathcal{F}_1(X)$, the closure $\overline{g}$ is a family of codimension $1$ manifolds with embedded corners. And also that $\Gamma_{b, \mathfrak{f}}^{fred(1)}(X)$ contains suspended Fredholm groupoids of those $\overline{g}$. Following the facewise compuation in this context gives us:
\begin{prop}
	Let $\phi \in C^*(\Gamma^{fred(1)}_{b, \mathfrak{f}})$ a non commutative symbol. \\
	Then $r_1 \circ Ind_{nc(1)}^X([\phi]_0) = \left( Ind_{1, nc(1)}^{\overline{g}}([\phi_{\vert \overline{g}}]_0) \right)_{g \in \mathcal{F}_1(X)}$.
\end{prop}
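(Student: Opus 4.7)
The plan is to reduce the identity to a face-by-face computation and then, for each codimension 1 face $g \in \mathcal{F}_1(X)$, to identify the $g$-component of $r_1 \circ Ind_{nc(1)}^X$ with the suspended non commutative index of $\overline{g}$. First I would observe that codimension 1 faces of $X$ meet only along codimension $\geq 2$ faces; since these are killed in the quotient $A_1/A_0$, one has
\[
    A_1/A_0 \;\cong\; \bigoplus_{g \in \mathcal{F}_1(X)} C^*(g \underset{B}{\times} g \times \R^{+*}),
\]
and under this identification $r_1 : K_0(A_1) \to K_0(A_1/A_0)$ splits coordinate-wise, so it suffices to verify the formula face by face.

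The next step is to produce, for each $g$, a tubular neighborhood of $\overline{g}$ in $X$ with respect to $\pi$ (available from remark \ref{tubNeighborhood} because $X$ is globally endowed with embedded corners). Restriction to such a neighborhood yields a groupoid isomorphism, near $\overline{g}$,
\[
    \Gamma_{b, \mathfrak{f}}(X)|_{\mathcal{F}_{\leqslant 1}} \;\cong\; \Gamma_{b, \mathfrak{f}}(\overline{g}) \times \R^{+*},
\]
the $\R^{+*}$ factor encoding the normal direction to $\overline{g}$. Passing to the corresponding deformations to the normal cone and restricting above $\overline{g} \times [0,1]$, this local identification promotes to
\[
    \Gamma^{fred(1)}_{b, \mathfrak{f}}(X)|_{\overline{g}} \;\cong\; \Gamma^{fred(1)}_{b, \mathfrak{f}}(\overline{g}) \times \R^{+*},
\]
which is the same as $\Gamma^{tan}_{b, \mathfrak{f}}(\overline{g}) \times \R^{+*}$ since $\overline{g}$ has codimension one, and intertwines the evaluation maps $e_1$ on both sides.

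With this identification in hand, the facewise naturality square reads: on the top, the restriction of the non commutative symbol $\phi$ near $\overline{g}$ followed by evaluation at $t=1$ into the $g$-summand of $A_1/A_0$; on the bottom, the restriction $\phi|_{\overline{g}}$ followed by the $1$-suspended $nc(1)$-index of $\overline{g}$. Commutativity of this square combined with the direct sum decomposition of $A_1/A_0$ yields exactly
\[
    r_1 \circ Ind_{nc(1)}^X([\phi]_0) \;=\; \Big( Ind_{1, nc(1)}^{\overline{g}}([\phi_{|\overline{g}}]_0) \Big)_{g \in \mathcal{F}_1(X)}.
\]

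The main obstacle will be verifying the compatibility of the deformation-to-normal-cone construction with restriction to a boundary face across the $t=1$ slice: namely, that the tubular identification lifts to a smooth isomorphism of tangent groupoids compatible with $e_1$ and with the open–closed decomposition used to define $\Gamma^{fred(1)}_{b, \mathfrak{f}}$. Once this naturality is established, the diagram chase is routine thanks to the facewise decomposition of $A_1/A_0$.
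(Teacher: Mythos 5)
Your overall skeleton matches the paper's: split $A_1/A_0$ facewise, then for each $g\in\mathcal F_1(X)$ use a (suspended) restriction to $\overline{g}$ and naturality of $e_1$ to identify the $g$-component of $r_1\circ Ind^X_{nc(1)}$ with an index attached to $\overline g$. The paper packages this as one commutative diagram with restriction maps $r_{\overline{g}\times[0,1]\setminus\partial\overline{g}\times\{1\}}$ into $\bigoplus_g K^0(\Gamma^{fred}_{b,\mathfrak f}(\overline g)\times\R^{+*})$. However, a step in your argument fails and it matters which groupoid shows up on the $\overline g$ side.

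The identification you write,
\[
\Gamma^{fred(1)}_{b,\mathfrak f}(X)\big|_{\overline g\times[0,1]}\;\cong\;\Gamma^{fred(1)}_{b,\mathfrak f}(\overline g)\times\R^{+*}\;=\;\Gamma^{tan}_{b,\mathfrak f}(\overline g)\times\R^{+*},
\]
is incorrect. By definition $\Gamma^{fred(1)}_{b,\mathfrak f}(X)=\Gamma^{tan}_{b,\mathfrak f}(X)\big|_{X\times[0,1[}\sqcup\Gamma_{b,\mathfrak f}(X)\big|_{\mathcal F_{\leqslant 1}}$, so at the slice $t=1$ one retains only strata of codimension $\leqslant 1$ \emph{in $X$}. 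Over the closed face $\overline g$ this means retaining only the open part $g$, \emph{not} $\partial\overline g$: the codimension-$2$ faces of $X$, which are exactly the codimension-$1$ faces of $\overline g$, are deleted at $t=1$. Therefore the restriction of $\Gamma^{fred(1)}_{b,\mathfrak f}(X)$ over $\overline g \times [0,1]\setminus\partial\overline g\times\{1\}$ is (after the tubular identification) $\Gamma^{fred}_{b,\mathfrak f}(\overline g)\times\R^{+*}$ — the (suspended) \emph{Fredholm} groupoid — and not $\Gamma^{tan}_{b,\mathfrak f}(\overline g)\times\R^{+*}$. The same slip occurs earlier in your proof when you assert $\Gamma_{b,\mathfrak f}(X)\big|_{\mathcal F_{\leqslant 1}}\cong\Gamma_{b,\mathfrak f}(\overline g)\times\R^{+*}$ near $\overline g$: the left-hand side has nothing above codimension-$2$ points of $X$, while $\Gamma_{b,\mathfrak f}(\overline g)$ does carry a stratum over the codimension-$1$ faces of $\overline g$, so the correct comparison is with $\Gamma_{b,\mathfrak f}(\overline g)\big|_{\mathcal F_{\leqslant 0}}\times\R^{+*}$.

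This is not cosmetic: $Ind_{1,nc(1)}^{\overline g}$ takes values in $K_0(A_1^{\overline g}\otimes C_0(\R^{+*}))$, whereas the $g$-summand of $K_0(A_1/A_0)$ — the target of $r_1$ — is $K_0\bigl(C^*(g\underset{B}{\times}g)\otimes C_0(\R^{+*})\bigr)\cong K_0(A_0^{\overline g}\otimes C_0(\R^{+*}))$. These do not match, so the identity as you and the proposition write it does not even typecheck. The paper's own diagram (and its main codimension-$2$ theorem) use $Ind_{1,nc}^{\overline g}$, i.e.\ the suspended Fredholm index of $\overline g$, whose target is the correct one. You have inherited a notational slip in the printed statement ($Ind_{1,nc(1)}$ for $Ind_{1,nc}$) without noticing the resulting domain mismatch. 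Replacing $\Gamma^{fred(1)}_{b,\mathfrak f}(\overline g)$ by $\Gamma^{fred}_{b,\mathfrak f}(\overline g)$ and the index by $Ind_{1,nc}^{\overline g}$ closes the gap and recovers exactly the paper's commutative diagram.
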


\begin{proof}
	Ths compuation comes from the commutativity of the following:
	% https://q.uiver.app/#q=WzAsNCxbMCwxLCJLXjAoXFxHYW1tYV97YixcXG1hdGhmcmFre2Z9fV57dGFufShYKV97XFx2ZXJ0IFggXFx0aW1lcyBbMCwgMV0gXFxzZXRtaW51cyBcXG1hdGhjYWx7Rn1fMihYKSBcXHRpbWVzIFxcezFcXH19KSJdLFsxLDAsIktfMChBXzEpIl0sWzEsMiwiXFx1bmRlcnNldHtnIFxcaW4gXFxtYXRoY2Fse0Z9XzEoWCl9e1xcYmlnb3BsdXN9IEteMChcXEdhbW1hXntGcmVkfV97YiwgXFxtYXRoZnJha3tmfX0oXFxvdmVybGluZXtnfSkgXFx0aW1lcyBcXFJeeysqfSkiXSxbMiwxLCJLXzAoQV8xL0FfMCkiXSxbMCwxLCJJbmRfe25jfV57WCwgXFxtYXRoY2Fse0Z9XzIoWCl9Il0sWzAsMiwiXFx1bmRlcnNldHtnIFxcaW4gXFxtYXRoY2Fse0Z9XzEoWCl9e1xcb3BsdXN9IHJfe1xcb3ZlcmxpbmV7Z30gXFx0aW1lc1swLCAxXSBcXHNldG1pbnVzIFxccGFydGlhbCBcXG92ZXJsaW5le2d9IFxcdGltZXMgXFx7MVxcfX0iLDIseyJsYWJlbF9wb3NpdGlvbiI6MjB9XSxbMiwzLCJcXHVuZGVyc2V0e2cgXFxpbiBcXG1hdGhjYWx7Rn1fMShYKX17XFxvcGx1c30gSW5kX1Nee0ZyZWQoXFxvdmVybGluZXtnfSl9IiwyLHsibGFiZWxfcG9zaXRpb24iOjEwMH1dLFsxLDMsInJfMSIsMCx7InN0eWxlIjp7InRhaWwiOnsibmFtZSI6Imhvb2siLCJzaWRlIjoidG9wIn19fV1d
\[\begin{tikzcd}
	& {K_0(A_1)} \\
	{K^0(\Gamma_{b,\mathfrak{f}}^{fred(1)}(X)} && {K_0(A_1/A_0)} \\
	& {\underset{g \in \mathcal{F}_1(X)}{\bigoplus} K^0(\Gamma^{fred}_{b, \mathfrak{f}}(\overline{g}) \times \R^{+*})}
	\arrow["{r_1}", hook, from=1-2, to=2-3]
	\arrow["{Ind_{nc(1)}^X}", from=2-1, to=1-2]
	\arrow["{\underset{g \in \mathcal{F}_1(X)}{\oplus} r_{\overline{g} \times[0, 1] \setminus \partial \overline{g} \times \{1\}}}"'{pos=0.2}, from=2-1, to=3-2]
	\arrow["{\underset{g \in \mathcal{F}_1(X)}{\oplus} Ind_{1, nc}^{\overline{g}}}"'{pos=1}, from=3-2, to=2-3]
\end{tikzcd}\]
\end{proof}

We can now compile those ingredients in what will be our main result:
\secondCodThm

We can notice a different behaviour depending on the codimension. For the codimension $2$ faces, which have disjoint closure, we only ask for the vanishing of their suspended associated index. The codimension $1$ faces have closure which can intersect. This can allow cancelling phenomenas between them. \\

The combinatoric properties are expressed using homology groups with coefficients in $K^1(B)$. There are a lot of cases where this group is zero (e.g the family with one membre: $B=\{*\}$, in this case we recover the results of \cite{CRL18} and \cite{CRL25}).
\begin{thm}{Caracterisation of $ind_{\partial}$ vanishing - Case $K^1(B) = 0$}
	\[
			K_0(A_2/A_0) \cong H_0^{pcn}(X, X_0;  K^0(B))
			\,\,\, \text{and} \,\,\,
			Ind_{\partial}([\sigma(T)]_0) = 0
			\Leftrightarrow
			\forall f \in \mathcal{F}_2(X), \, Ind^f_2([{\sigma_T}_{\vert f}]_0) = 0
	\]
\end{thm}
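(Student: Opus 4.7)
The plan is to obtain both assertions as immediate corollaries of the two main results of Section 4, namely the obstruction space estimation (\texttt{secondObstrctSpaceThm}) and the codimension 2 vanishing criterion (\texttt{secondCodThm}), by feeding in the hypothesis $K^1(B)=0$.

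First I would handle the isomorphism $K_0(A_2/A_0) \cong H_0^{pcn}(X, X_0; K^0(B))$. The obstruction space estimation gives the short exact sequence
\[
    0 \longrightarrow H_1^{pcn}(X, X_0 ; K^1(B)) \longrightarrow K_0(A_2/A_0) \longrightarrow H_0^{pcn}(X, X_0 ; K^0(B)) \longrightarrow 0.
\]
Since the conormal complex with coefficients is defined by $C_p(X;G) := C_p(X) \otimes_{\Z} G$, the whole complex $C_\bullet(X, X_0; K^1(B))$ vanishes when $K^1(B)=0$, and in particular $H_1^{pcn}(X, X_0; K^1(B))=0$. The short exact sequence then forces the quotient map $K_0(A_2/A_0) \to H_0^{pcn}(X, X_0; K^0(B))$ to be an isomorphism.

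Next I would specialize \texttt{secondCodThm}. That theorem asserts that $Ind_{\partial}([\sigma_T]_0) = 0$ if and only if
\begin{enumerate}
    \item $Ind_2^f([{\sigma_T}_{\vert f}]_0) = 0$ for every $f \in \mathcal{F}_2(X)$, and
    \item the induced non-commutative symbol $\overset{\circ}{\sigma_T}$ satisfies that the family $\left(Ind_{1, nc}^{\overline{g}}([\overset{\circ}{\sigma_T}_{\vert \overline{g}}]_0)\right)_{g \in \mathcal{F}_1(X)}$ vanishes in $H_1^{pcn}(X, X_0; K^1(B))$.
\end{enumerate}
Under the standing hypothesis $K^1(B)=0$, the group $H_1^{pcn}(X, X_0; K^1(B))$ is zero by the same tensor-product argument as above, so condition (2) is vacuously satisfied whenever condition (1) holds and a lift $\overset{\circ}{\sigma_T}$ exists. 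Existence of such a lift is provided (as shown in the paragraph preceding the lift lemma) precisely by the vanishing of $r_2(Ind^X([\sigma_T]_0)) = \left(Ind_2^f([{\sigma_T}_{\vert f}]_0)\right)_{f \in \mathcal{F}_2(X)}$, i.e.\ by condition (1) itself. Thus the biconditional of \texttt{secondCodThm} collapses to the single condition displayed in the statement.

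There is essentially no obstacle here beyond correctly invoking the two prior theorems and checking the elementary fact that tensoring with the zero group kills the entire conormal complex; the work has already been done upstream. The only minor care needed is to note that the formulation of \texttt{secondCodThm} implicitly requires the existence of a non-commutative lift $\overset{\circ}{\sigma_T}$, and to verify that this existence is guaranteed by condition (1) via the lift proposition, so that the statement does reduce cleanly to $\forall f \in \mathcal{F}_2(X), \, Ind^f_2([{\sigma_T}_{\vert f}]_0) = 0$.
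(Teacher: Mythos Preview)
Your proposal is correct and follows exactly the approach implicit in the paper, which states the result as an immediate corollary without giving a separate proof. Your argument is in fact more detailed than the paper's own treatment: you correctly specialize the short exact sequence of \texttt{secondObstrctSpaceThm} and the vanishing criterion of \texttt{secondCodThm} under $K^1(B)=0$, and you even take care to justify the existence of the lift $\overset{\circ}{\sigma_T}$ from condition (1).
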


\section{Appendix}

\begin{prop}\label{flipDemo}
	Let $F : C_0(\R^2) \rightarrow C_0(\R^2)$ the flip automorphism defined as $F(f)(x, y) = f(y, x)$. Then under the isomorphism $K_0(C_0(\R^2)) \cong \Z$, $K_0(F)$ is identified with $-Id_{\Z}$.
\end{prop}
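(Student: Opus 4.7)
The plan is to exploit the tensor-product decomposition $C_0(\R^2) \cong C_0(\R) \otimes C_0(\R)$, under which the flip $F$ is exactly the canonical tensor swap $\tau \colon a \otimes b \mapsto b \otimes a$, and then to invoke graded commutativity of the external product in $K$-theory. Concretely, writing $A = C_0(\R)$, I would first recall that $K_0(A) = 0$ and $K_1(A) \cong \Z$ is generated by the Bott unitary $\beta$, and that the Künneth / external-product map delivers a canonical isomorphism $K_1(A) \otimes K_1(A) \xrightarrow{\cong} K_0(A \otimes A) \cong \Z$ sending $\beta \otimes \beta$ to a generator of $K_0(C_0(\R^2))$. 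This pins down the generator in a form that is visibly compatible with $\tau$.

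Second, I would invoke the graded commutativity of the Kasparov external product: for $x \in K_i(A)$ and $y \in K_j(B)$ one has $\tau_{*}(x \times y) = (-1)^{ij}\,(y \times x)$ in $K_{i+j}(B \otimes A)$. Applied with $i = j = 1$ to the generator $\beta \times \beta$, this yields $K_0(F)(\beta \times \beta) = -(\beta \times \beta)$, so under the identification $K_0(C_0(\R^2)) \cong \Z$ one reads off $K_0(F) = -\mathrm{Id}_{\Z}$.

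The main technical obstacle is justifying the graded commutativity of the external product cleanly; it is entirely standard in $KK$-theory (see for instance Blackadar's monograph or Kasparov's original paper) but requires a precise reference. If one prefers a purely elementary argument, an alternative is to factor the flip in $\mathrm{GL}_2(\R)$ as a $\pi/2$ rotation composed with the complex conjugation $(x,y) \mapsto (x,-y)$: the rotation lies in the identity component of $\mathrm{GL}_2(\R)$ and so acts trivially on $K$-theory by homotopy invariance, and one then checks by hand on an explicit Bott projection that complex conjugation negates its class in $K_0$, reflecting the fact that the Bott generator is detected by the complex orientation of $\R^2 \cong \C$, which is reversed by conjugation.
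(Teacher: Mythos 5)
Your proposal is correct, and your main argument is genuinely different from the paper's. The paper argues in an elementary, hands-on fashion: it first observes $F^2=\mathrm{Id}$ so $K_0(F)=\pm\mathrm{Id}$, identifies $\R^2\cong\C$ so that the flip $(x,y)\mapsto(y,x)$ becomes $z\mapsto i\bar z$, and then passes to the compactification sequence $0\to C_0(\R^2)\to C(\mathbb{D})\to C(S^1)\to 0$; by naturality of the connecting map, the problem reduces to computing the map induced by $g\mapsto i\bar g$ on $K_1(C(S^1))\cong\Z$, which visibly sends $[z]_1$ to $-[z]_1$ since $[\bar g]_1=-[g]_1$ and $[i]_1=0$. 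Your main route instead decomposes $C_0(\R^2)\cong C_0(\R)\otimes C_0(\R)$, identifies $F$ with the tensor swap, and invokes graded commutativity of the external (Kasparov) product, $\tau_*(x\times y)=(-1)^{ij}(y\times x)$, with $i=j=1$; this is a cleaner, more conceptual argument that generalizes immediately (e.g.\ it gives the sign of any coordinate permutation of $\R^n$ for free), at the cost of leaning on a nontrivial black box whose proof is, at bottom, a version of the same Bott-periodicity sign bookkeeping that the paper carries out by hand. Your sketched alternative (factor the flip as a rotation, trivial on $K$-theory by homotopy invariance, composed with one coordinate reflection, and check the sign on an explicit Bott projection) is essentially the paper's strategy recast at the level of $K_0$ rather than pushed to $K_1(C(S^1))$ via the exponential map; both exploit the factorization of the flip into a component-of-identity part and an orientation-reversing part. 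All steps you rely on are standard and correctly deployed; I see no gap.
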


\begin{proof}
	The flip automorphism $F$ being involutive, then $K_0(F)$ is involutive as well. Through the isomoprhism $K_0(C_0(\R^2)) \cong \Z$ it will give an involution of $\Z$, i.e $\pm Id_{\Z}$. \\ Then $K_0(F) = \pm Id_{K_0(C_0(\R^2))}$. \\ \\
	Let $\mathbb{D} \subset \mathbb{C}$ the closed unit disk. The isomorphism $\R^2 \cong \overset{\circ}{\mathbb{D}}$ induce a $C^*$- algebra isomorphism through which the morphism $F$ is written as
	\begin{tabular}{ccc}
		$C_0(\overset{\circ}{\mathbb{D}})$ & $\rightarrow$ & $C_0(\overset{\circ}{\mathbb{D}})$ \\
		$f$ & $\mapsto$ & $i \overline{f}$.
	\end{tabular}
	Using this morphism on the whole unit disk $\mathbb{D}$ rise a short exact sequence morphism:
	% https://q.uiver.app/#q=WzAsMTAsWzAsMCwiMCJdLFsxLDAsIkNfMChcXFJeMikiXSxbMiwwLCJDXzAoXFxtYXRoYmJ7RH0pIl0sWzMsMCwiQyhTXjEpIl0sWzQsMCwiMCJdLFswLDEsIjAiXSxbMSwxLCJDXzAoXFxSXjIpIl0sWzIsMSwiQ18wKFxcbWF0aGJie0R9KSJdLFszLDEsIkMoU14xKSJdLFs0LDEsIjAiXSxbMCwxXSxbMSwyXSxbMiwzXSxbMyw0XSxbNSw2XSxbNiw3XSxbNyw4XSxbOCw5XSxbMSw2LCJGIiwyXSxbMiw3XSxbMyw4LCJHIl1d
\[\begin{tikzcd}
	0 & {C_0(\R^2)} & {C_0(\mathbb{D})} & {C(S^1)} & 0 \\
	0 & {C_0(\R^2)} & {C_0(\mathbb{D})} & {C(S^1)} & 0
	\arrow[from=1-1, to=1-2]
	\arrow[from=1-2, to=1-3]
	\arrow["F"', from=1-2, to=2-2]
	\arrow[from=1-3, to=1-4]
	\arrow[from=1-3, to=2-3]
	\arrow[from=1-4, to=1-5]
	\arrow["G", from=1-4, to=2-4]
	\arrow[from=2-1, to=2-2]
	\arrow[from=2-2, to=2-3]
	\arrow[from=2-3, to=2-4]
	\arrow[from=2-4, to=2-5]
\end{tikzcd}\]
\end{proof}
with $G(g) = i \overline{g}$. \\
Using connection maps naturality, it is enough to know that $K_1(G) : K_1(C(S^1)) \rightarrow K_1(C(S^1))$ corresponds to $-Id_{K_1(C(S^1))}$ to conclude. \\
Using $K_1(C(S^1)) \cong \Z$, and $G$ being involutive as well, once again $K_1(G) = \pm Id_{K_1(C(S^1))}$. Let $g \in C(S^1)$ the function defined by $g(z) = z$ whose class do not vanish in $K_1(C(S^1))$. \\
We have: $[g]_1 + [\overline{g}]_1 = [g \oplus \overline{g}]_1 \underset{\tiny{Wh. \, lem.}}{=} [g \overline{g} \oplus 1]_1 = [1 \oplus 1]_1 = 0_{K_1(C(S^1))}$, then $[\overline{g}]_1 = -[g]_1$. The constant function equal to $i$ being homotopic to the constant $1$ function we also have $[i]_1 = [1]_1 = 0$. \\ \\
Finally $K_1(G)([g]_1) = [i\overline{g}]_1 = [i]_1 + [\overline{g}]_1 = -[g]_1$. Then $K_1(G)$ could not be equal to identity, which conclude.

\printbibliography

\end{document}